\DeclareMathAlphabet\mathbfcal{OMS}{cmsy}{b}{n}
    \setlist[itemize]{leftmargin=*}
    \setlist[enumerate]{leftmargin=*}
\pgfplotsset{compat=1.18}
\newcommand{\rednote}[1]{{\color[hsb]{.0,1.0,.75}#1}}
    \newtheorem{theorem}{Theorem}[section]
    \newtheorem{corollary}[theorem]{Corollary}
    \newtheorem{lemma}[theorem]{Lemma}
    \newtheorem{remark}[theorem]{Remark}
    \newtheorem{proposition}[theorem]{Proposition}
    \numberwithin{equation}{section}
    \DeclareMathOperator{\tr}{\mathrm{tr}}
    \newcommand{\nablaG}{\nabla_\Gamma}
    \newcommand{\divG}{\mathrm{div}_\Gamma}
    \newcommand{\DeltaG}{\Delta_\G}
    \newcommand{\bn}{\mathbf n}
    \newcommand{\bx}{\mathbf x}
    \newcommand{\by}{\mathbf y}
    \newcommand{\bE}{\mathbf E}
    \newcommand{\bu}{\mathbf u}
    \newcommand{\bv}{\mathbf v}
    \newcommand{\bw}{\mathbf w}
    \newcommand{\bg}{\mathbf g}
    \newcommand{\bp}{\mathbf p}
    \newcommand{\ba}{\mathbf a}
    \newcommand{\bk}{\mathbf k}
    \newcommand{\be}{\mathbf e}
    \renewcommand{\bf}{\mathbf f}
    \newcommand{\bL}{\mathbf L}
    \newcommand{\bH}{\mathbf H}
    \newcommand{\bW}{\mathbf W}
    \newcommand{\bP}{\mathbf P}
    \newcommand{\bC}{\mathbf C}
    \newcommand{\bR}{\mathbf R}
    \newcommand{\bbR}{\mathbb R}
    \newcommand{\bbV}{\mathbb{V}}
    \newcommand{\ds}{\mathrm{d}s}
    \newcommand{\cV}{\mathcal{V}}
    \newcommand{\cT}{\mathcal{T}}
    \newcommand{\sT}{\mathscr{T}}
    \newcommand{\sD}{\mathscr{D}}
    \newcommand{\bcV}{\mathbfcal{V}}
    \newcommand{\embeds}{\hookrightarrow}
    \newcommand{\G}{\Gamma}
    \newcommand{\bI}{\mathbf{I}}
    \newcommand{\sA}{\mathscr{A}}
\title{Surface Stokes Without Inf-Sup Condition}
\author{Ricardo H. Nochetto and Mansur Shakipov}
\address{Department of Mathematics, University of Maryland, College Park, MD 20742}
\email{\{rhn,shakipov\}@umd.edu}
\subjclass[2020]{35J47, 35J50, 35M30, 35Q35, 58J05, 65N12, 65N15, 65N30}
\keywords{surface Stokes, elliptic system, well-posedness, parametric FEM, discrete stability, quasi-optimality, optimal error estimates}
\date{\today}
\begin{document}

\begin{abstract}
    For a $d$-dimensional hypersurface of class $C^3$ without boundary, we reformulate the surface Stokes equations as a nonsymmetric indefinite elliptic problem governed by two Laplacians. We then use this elliptic reformulation as a basis for a numerical method based on lifted parametric FEM. Assuming no geometric error for simplicity, we prove its well-posedness, quasi-best approximation in a robust mesh-dependent $H^1$-norm for any polynomial degree, as well as an optimal $L^2$ error estimate for both velocity and pressure. This entails a sufficiently small mesh size that solely depends on the Weingarten map and circumvents the usual discrete inf-sup condition. We present numerical experiments for velocity-pressure pairs with equal and disparate polynomial degrees, demonstrating that the proposed method is both accurate and practical.
\end{abstract}

\maketitle

\section{Introduction}
    The surface Stokes equations are an essential tool in modeling fluid flow on thin films and membranes \cite{Scriven1960, ArroyoDeSimone2009, RangamaniAgrawalMandadapuOsterSteigmann2013, SlatterySagis2013, Brenner2013, RahimiDeSimoneArroyo2013, KobaLiuGiga2017, JankuhnOlshanskiiReusken2018, ZhuSaintillanChern2025}. Discretization of the surface Stokes equations usually boils down to either a (Lagrangian) parametric FEM \cite{Dziuk1988, DziukElliott2013} or an (Eulerian) TraceFEM \cite{OlshanskiiReuskenGrande2009, OlshanskiiReusken2017, BurmanHansboLarsonMassing2018}. Similarly to the Euclidean case, both methods generally require the validity of an appropriate discrete inf-sup condition (also known as the Babu\v{s}ka-Brezzi condition) in order to ensure stability at the discrete level. This is a nontrivial matter both practically -- necessitating particular choices of finite element spaces for the velocity and the pressure -- and analytically -- requiring an often technical and sophisticated numerical analysis. There has been an increasing interest in numerical methods for the Surface Stokes and Navier-Stokes problems posed on either a stationary \cite{Fries2018, OlshanskiiQuainiReuskenYushutin2018, OlshanskiiYushutin2019, BrandnerReusken2020, BonitoDemlowLicht2020, LedererLehrenfeldSchoberl2020, JankuhnOlshanskiiReuskenZhiliakov2021, OlshanskiiReuskenZhiliakov2021, BrandnerJankuhnPraetorius2022, DemlowNeilan2024, HarderingPraetorius2024, DemlowNeilan2025-2, Reusken2025, ElliottMavrakis2025} or an evolving surface \cite{OlshanskiiReuskenZhiliakov2022, OlshanskiiReuskenSchwering2024}. As one would expect, all of these papers either assume or prove a discrete inf-sup condition for the primitive variable formulation.
    
    In the present paper, we show that for the tangential Stokes problem posed on a closed, sufficiently regular hypersurface, the need for the discrete inf-sup condition can be circumvented completely! To fix ideas, let $\G \subset \bbR^{d+1}, d \geq 2$ be a compact and connected $d$-dimensional manifold of class $C^3$ without boundary. Let $\bn$ be its outward pointing unit normal vector and $\bP := \bI - \bn \otimes \bn$ be the projection matrix. For a nonnegative integer $k$, let $H^k_\#$ be the scalar-valued Sobolev spaces $H^k(\G)$ with vanishing mean over $\G$, and let $\bH^k_t$ be the vector-valued Sobolev spaces $\bH^k(\G)$ that are a.e. tangential to $\G$. 
    Consider the tangent Stokes problem: given $(\bf, f) \in (\bH^1_t)' \times L^2_\#$, find a velocity-pressure pair $(\bu, u) \in \bH^1_t \times L^2_\#$ such that
    \begin{equation} \label{eq:stokes-intro}
        \begin{cases}
            (\nablaG \bu, \nablaG \bv) - (u, \divG \bv) & \kern-0.72em = \langle \bf, \bv \rangle_{(\bH^1_t)' \times \bH^1_t}, \qquad \forall \bv \in \bH^1_t, \\
            (\divG \bu, v) & \kern-0.72em = (f, v), \qquad \qquad \quad \ \ \ \forall v \in L^2_\#;
        \end{cases}
    \end{equation}
    this nonstandard notation will be convenient later. Here, $\nablaG, \divG := \tr(\nablaG)$ are, respectively, the tangential (covariant) gradient and tangential divergence; we recall that $\nablaG \bu = \bP (\nablaG^T u_i)_{i=1}^{d+1}$ where $\bu = (u_i)_{i=1}^{d+1}$. The parentheses in \eqref{eq:stokes-intro} denote the $L^2(\G)$-inner product. Even though the leading order term in our formulation \eqref{eq:stokes-intro} is the Bochner-Laplacian $-\Delta_B := -\bP \divG \nablaG$, with minor modifications for $d=2$, due to the Korn's inequality, our arguments still work when $\Delta_B$ is replaced with $\alpha \bP - \Delta_S, \alpha > 0$, where $\Delta_S := \bP \divG(\nablaG + \nablaG^T)$ is the more physically realistic vector Laplace operator; see \cite{KobaLiuGiga2017, JankuhnOlshanskiiReusken2018}. It is well-known that the standard weak formulation \eqref{eq:stokes-intro} is well-posed \cite{JankuhnOlshanskiiReusken2018, Reusken2020, HansboLarsonLarsson2020, BenavidesNochettoShakipov2025}.
    In the present paper, we show that the system above can be equivalently written as the following elliptic problem:
    \begin{equation} \label{eq:stokes-intro-new}
        \begin{cases}
            (\nablaG \bu, \nablaG \bv) - (u, \divG \bv) & \kern-0.72em = \langle \bf, \bv \rangle_{(\bH^1_t)' \times \bH^1_t}, \qquad \qquad \qquad \qquad \ \forall \bv \in \bH^1_t, \\
            (u, - \DeltaG v) - (\bR \bu, \nablaG v) & \kern-0.72em = \langle \bf, \nablaG v \rangle_{(\bH^1_t)' \times \bH^1_t} - (f, \DeltaG v), \qquad \forall v \in H^2_\#,
        \end{cases}
    \end{equation}
    where $\bR$ is the Ricci curvature map given by $\bR := \tr(\bW) \bW - \bW^2$ with $\bW := \nablaG \bn$ being the Weingarten map. To see this, we first note the following elementary identity valid for $\G$ of class $C^3$ and proven in \cite[Lemma 2.1]{BenavidesNochettoShakipov2025}: for all $\bv \in \bH^2_t, v \in H^2_\#$,
    \begin{equation} \label{eq:DeltaB-iden}
        \begin{split}
            (\nablaG \bv, \nablaG \nablaG v) & = (\nablaG^T \bv, \nablaG \nablaG v) = -(\bP \divG \nablaG^T \bv, \nablaG v) \\
            & = -(\nablaG \divG \bv + \bR \bv, \nablaG v) = (\divG \bv, \DeltaG v) - (\bR \bv, \nablaG v).
        \end{split}
    \end{equation}
    Here, the first equality follows from the symmetry of the covariant Hessian $\nablaG \nablaG v$ \cite[Proposition C.5]{BenavidesNochettoShakipov2025}, while the third equality is due to \cite[Lemma 2.1]{JankuhnOlshanskiiReusken2018} and density. Equality \eqref{eq:DeltaB-iden} can be further extended to $\bv \in \bH^1_\#$, again by density. We next test the momentum equation in \eqref{eq:stokes-intro} with $\bv = \nablaG v, v \in H^2_\#$ and integrate over $\G$:
    \[
        (\nablaG \bu, \nablaG \nablaG v) - (u, \DeltaG v) = \langle \bf, \nablaG v \rangle_{(\bH^1_t)' \times \bH^1_t}.
    \]
    Upon invoking \eqref{eq:DeltaB-iden} together with $\divG \bu = f$, we arrive at
    \[
        (u, - \DeltaG v) - (\bR \bu, \nablaG v) = \langle \bf, \nablaG v \rangle_{(\bH^1_t)' \times \bH^1_t} - (f, \DeltaG v),
    \]
    which is precisely the second equation in \eqref{eq:stokes-intro-new}. This chain of equalities can be reverted, thus giving the asserted equivalence between \eqref{eq:stokes-intro} and \eqref{eq:stokes-intro-new}. We thus focus our attention on the following general elliptic problem: given $(\bg, g) \in (\bH^1_t)' \times (H^1_\#)'$, find $(\bw, w) \in \bH^1_t \times H^1_\#$ such that for all $(\bv, v) \in \bH^1_t \times H^1_\#$:
    \begin{equation} \label{eq:ell-intro}
        \begin{cases}
            (\nablaG \bw, \nablaG \bv) + (\nablaG w, \bv) & \kern-0.72em = \langle \bg, \bv \rangle_{(\bH^1_t)' \times \bH^1_t}, \\
            (\nablaG w, \nablaG v) - (\bR \bw, \nablaG v) & \kern-0.72em = \langle g, v \rangle_{(H^1_\#)' \times H^1_\#}.
        \end{cases}
    \end{equation}
    In contrast to \eqref{eq:stokes-intro}, \eqref{eq:ell-intro} is a system of two Laplace equations on $\G$ perturbed by lower-order terms. The system \eqref{eq:ell-intro} induces an indefinite bilinear form on $\bH^1_t \times H^1_\#$ that satisfies a G\r{a}rding's inequality. By establishing a relation between \eqref{eq:stokes-intro} and \eqref{eq:ell-intro}, we prove that the latter system is well-posed. Moreover, we derive regularity estimates for \eqref{eq:ell-intro} and its adjoint.

    \textit{The well-posedness of \eqref{eq:ell-intro} and the validity of a G\r{a}rding's inequality make \eqref{eq:ell-intro} amenable to discretization with any pair of finite element spaces, thereby circumventing the Babu\v{s}ka-Brezzi inf-sup condition needed for \eqref{eq:stokes-intro}.} To explore this assertion, we choose the simplest possible discretization, namely, the lifted parametric finite element method \cite{Dziuk1988, DziukElliott2013}. This scheme ignores the geometric error due to approximating $\G$, yet conveys all essential features. We employ ideas by A. Schatz \cite{Schatz1974} to prove the discrete well-posedness, quasi-best approximation property for the pair $(\bw, w)$ in $H^1$, along with optimal $L^2$-error estimates. 
    
    Attempts to avoid the discrete inf-sup condition have a long history for bounded flat domains \cite{BrezziPitkaranta1984, HughesFranca1986, BrezziDouglas1988, DuranNochetto1989, DahlkeHochmuthUrban2000, BanschMorinNochetto2002, KondratyukStevenson2008, BoffiBrezziFortin2013}. They usually perturb the PDE system, and so are inconsistent, or add local stabilization (bubbles, least squares) to the original saddle point system. Our approach is entirely different in that the reformulation \eqref{eq:ell-intro} is globally consistent with \eqref{eq:stokes-intro}, yet allows any polynomial degrees for the pair $(\bw, w)$.

    The rest of the paper is organized as follows. In Section \ref{sec:prelims}, we introduce Sobolev spaces on $\G$, cite results from the elliptic and Stokes theory, and derive the crucial identity \eqref{eq:DeltaB-iden}. Sections \ref{sec:reformulation} and \ref{sec:discretization} deal with the analysis and discretization of problem \eqref{eq:ell-intro}, respectively. Section \ref{sec:numer-experiments} presents two numerical experiments, one illustrating the flexibility in choosing polynomial degrees for $(\bw, w)$ while maintaining stability. We close the paper with conclusions in Section \ref{sec:conclusions}.

\section{Preliminaries} \label{sec:prelims}
    Let $\G \subset \bbR^{d+1}, d \geq 2$ be a compact, connected $d$-dimensional stationary hypersurface without boundary of class $C^3$. Let $\bn$ be its outward unit normal vector, $\bP := \bI - \bn \otimes \bn$ be the projection matrix. Let $\nablaG$ be the covariant derivative operator and $\bW := \nablaG \bn$ be the Weingarten map. The following geometric quantity, called the Ricci curvature map, will play a crucial role in the forthcoming development:
    \begin{equation} \label{eq:R}
        \bR := \tr(\bW) \bW - \bW^2.
    \end{equation}
    This quantity plays an important role in relating different definitions of vector Laplacians on surfaces, see \cite{JankuhnOlshanskiiReusken2018, OlshanskiiReuskenZhiliakov2021, BenavidesNochettoShakipov2025}.
    \subsection{Function spaces}
        Let $W^{k,p}$ and $\bW^{k,p}$ denote, respectively, the standard scalar and vector-valued Sobolev spaces over $\G$.
        We introduce the following subspaces of $L^2$ and $\bL^2$:
        \begin{equation}\label{eq:spaces-l2}
            \begin{split}
                L^2_\# & := \left\{v \in L^2 : \int_\G v = 0 \right\}, \\
                \bL^2_t & := \{\bv \in \bL^2: \bv \cdot \bn = 0 \text{ a.e. on $\G$} \},
            \end{split}
        \end{equation}
        their corresponding $H^k, \bH^k, k \geq 1$-versions:
        \begin{equation}\label{eq:spaces-hm}
            \begin{split}
                H^k_\# & := H^k \cap L^2_\#, \\
                \bH^k_t & := \bH^k \cap \bL^2_t,
            \end{split}
        \end{equation}
        and the dual spaces
        \begin{equation}\label{eq:spaces-dual}
            \begin{split}
                H^{-k}_\# & := (H^k_\#)', \\
                \bH^{-k}_t & := (\bH^k_t)'.
            \end{split}
        \end{equation}
        For the Stokes problem, it is often convenient to consider the system for a pair of two variables $V := (\bv, v)$. For this reason, we also introduce the following spaces:
        \begin{equation}\label{eq:V-space}
            \bbV_{r, s} := \bH^r_t \times H^s_\#, \qquad \text{$r,s$ integer}.
        \end{equation}
        Of course, we have the relation $\bbV_{r,s}' \cong \bbV_{-r,-s}$. We shall use the convention $V = (\bv, v)$ consistently throughout this paper, which means that the solution to the Stokes system shall be denoted $U = (\bu, u)$ with $\bu$ being the velocity and $u$ being the pressure. This is not a standard notation, but it will simplify the presentation of the forthcoming analysis.
    
    \subsection{Bilinear forms}
        Let $(\cdot, \cdot)$ denote the $L^2$-inner product over $\G$. We shall use the same notation for the scalar-, vector-, or tensor-valued cases. Next, we define the Dirichlet forms:
        \begin{equation} \label{eq:dir-forms}
            \begin{split}
                a(u, v) & := (\nablaG u, \nablaG v), \qquad \forall u, v \in H^1_\#, \\
                \ba(\bu, \bv) & := (\nablaG \bu, \nablaG \bv), \qquad \forall \bu, \bv \in \bH^1_t.
            \end{split}
        \end{equation}
        For a positive integer $k$, we let $\langle \cdot, \cdot \rangle_{-k, k}$ denote the duality pairing between $H^{-k}_\#$ and $H^k_\#$ or between $\bH^{-k}_t$ and $\bH^k_t$.
    
    \subsection{Norms}
        For an integer $k$, we shall use $\|\cdot\|_k$ to mean $H^k$- or $\bH^k$-norm, and $\|\cdot\| := \|\cdot\|_0$. The $L^\infty$-norm would be denoted $\|\cdot\|_{0,\infty}$.
        
        We shall endow $L^2_\#$ and $\bL^2_t$ with the canonical norms, while the norms on $H^1_\#$ and $\bH^1_t$ are induced, respectively, by $a(\cdot, \cdot)$ and $\ba(\cdot, \cdot)$ due to the following Poincar\'e inequalities for a compact and connected $\G$ of class $C^3$ \cite{DziukElliott2013, BonitoDemlowNochetto2020, HansboLarsonLarsson2020, BenavidesNochettoShakipov2025}:
        \begin{equation} \label{eq:poincare}
            \begin{split}
                \|v\| & \leq C_{P,\#} \ a(v, v)^{1/2}, \qquad \forall v \in H^1_\#, \\
                \|\bv\| & \leq C_{P,t} \ \ba(\bv, \bv)^{1/2}, \qquad \forall \bv \in \bH^1_t.
            \end{split}
        \end{equation}
        % We shall thus also drop the zeroth-order part from the $H^k$- and $\bH^k$-norms for $k \geq 2$.
        We endow $\bbV_{r,s}$ with the graph norm:
        \begin{equation}\label{eq:V-space-norm}
            \|V\|_{r,s} := \left( \|\bv\|_r^2 + \|v\|_s^2 \right)^{1/2}.
        \end{equation}
    
    \subsection{Elliptic theory}
        We shall next consider two classical elliptic problems:
        \begin{itemize}
            \item \textit{Laplace-Beltrami}. Given $f \in H^{-1}_\#$, find $u \in H^1_\#$ such that for all $v \in H^1_\#$
            \begin{equation} \label{eq:lb}
                a(u, v) = \langle f, v \rangle_{-1,1}.
            \end{equation}
            \item \textit{Bochner-Laplace}. Given $\bf \in \bH^{-1}_t$, find $\bu \in \bH^1_t$ such that for all $\bv \in \bH^1_t$
            \begin{equation} \label{eq:bl}
                \ba(\bu, \bv) = \langle \bf, \bv \rangle_{-1,1}.
            \end{equation}
        \end{itemize}
        The well-posedness of \eqref{eq:lb} and \eqref{eq:bl} hinges, respectively, on the two Poincar\'e inequalities from \eqref{eq:poincare}. Higher-regularity results for \eqref{eq:lb} can be found in  \cite{DziukElliott2013, BonitoDemlowNochetto2020, BenavidesNochettoShakipov2025}, and for \eqref{eq:bl} in \cite{BenavidesNochettoShakipov2025}. We summarize these results in the following proposition.
        \begin{proposition}[elliptic theory] \label{prop:elliptic-theory}
            Let $\G$ be of class $C^3$. Then the Laplace-Beltrami \eqref{eq:lb} and the Bochner-Laplace \eqref{eq:bl} problems are well-posed, and there exist constants $C_{LB, 1}, C_{BL, 1} > 0$ depending only on $\G$ such that
            \[
            \begin{split}
                \|u\|_1 & \leq C_{LB, 1} \|f\|_{-1}, \\
                \|\bu\|_1 & \leq C_{BL, 1} \|\bf\|_{-1}.
            \end{split}
            \]
            If $f \in L^2_\#, \bf \in \bL^2_t$, then the solutions $u \in H^1_\#$ of \eqref{eq:lb} and $\bu \in \bH^1_t$ of \eqref{eq:bl} attain higher regularity, namely, $u \in H^2_\#, \bu \in \bH^2_t$ and there exist constant $C_{LB, 2}, C_{BL, 2} > 0$ depending only on $\G$ such that
            \[
            \begin{split}
                \|u\|_2 & \leq C_{LB, 2} \|f\|, \\
                \|\bu\|_2 & \leq C_{BL, 2} \|\bf\|.
            \end{split}
            \]
            % Furthermore, both equations are satisfied strongly a.e. on $\G$: 
            % \[
            % \begin{cases}
            %     - \DeltaG u & \kern-0.72em = f, \\
            %     - \Delta_B \bu & \kern-0.72em = \bf.
            % \end{cases}
            % \]
        \end{proposition}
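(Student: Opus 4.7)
The well-posedness assertions and the $H^1$ bounds follow immediately from the Lax-Milgram lemma. Both bilinear forms $a$ and $\ba$ are continuous by Cauchy-Schwarz, and coercive on $H^1_\#$ and $\bH^1_t$ since, by the convention adopted in Section~\ref{sec:prelims}, these spaces are normed respectively by $a(\cdot,\cdot)^{1/2}$ and $\ba(\cdot,\cdot)^{1/2}$. Testing with the solution and invoking duality yields the $H^1$ estimates; one can in fact take $C_{LB,1} = C_{BL,1} = 1$.

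For the $H^2$ regularity of the Laplace-Beltrami problem, the plan is to localize via a finite $C^3$ atlas $\{(U_\alpha, \phi_\alpha)\}$ of $\G$ together with a smooth subordinate partition of unity $\{\chi_\alpha\}$. In coordinates on $\phi_\alpha(U_\alpha) \subset \bbR^d$, the operator $-\DeltaG$ takes the divergence form $-\tfrac{1}{\sqrt g}\partial_i\bigl(\sqrt g\, g^{ij}\partial_j\,\cdot\bigr)$ with $C^2$ coefficients, since $\G$ is $C^3$. The function $\chi_\alpha u$ then solves a strongly elliptic equation on a Euclidean chart with $L^2$ right-hand side (containing $f$, derivatives of $\chi_\alpha$ applied to $u$, and commutator terms). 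Standard interior $H^2$ regularity gives $\chi_\alpha u \in H^2(U_\alpha)$, and summing over $\alpha$ produces the global bound, absorbing the lower-order $\|u\|_1$ term via the $H^1$ estimate already established.

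For the Bochner-Laplace problem I would apply the same localization strategy but must unpack the tangentiality constraint. On each chart I would pick a smooth orthonormal tangent frame $\{\be_1,\dots,\be_d\}$ and write $\bu = u^i\be_i$, so that $\bu \cdot \bn = 0$ is encoded automatically. The equation $-\bP\divG\nablaG\bu = \bf$ then becomes a strongly elliptic system of $d$ scalar PDEs for the components $u^i$, with principal part the Euclidean Laplacian (on the flattened chart) and $C^1$ lower-order coefficients arising from the frame and the Christoffel symbols. Classical elliptic regularity for such systems, obtained for instance via Nirenberg's difference quotient method applied component-wise and combined with the partition of unity, yields the desired $\bu \in \bH^2_t$ with the stated bound.

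The main obstacle is verifying strong ellipticity of the Bochner-Laplace system in local frames and confirming that the lower-order terms involving $\bW$ and its first derivatives remain controllable by $\|\bu\|_1$ alone, so that $\bf \in \bL^2_t$ genuinely suffices. This is precisely where the assumption $\G \in C^3$ enters, ensuring that the Weingarten map $\bW$, and hence the Ricci curvature $\bR$ appearing in the Weitzenb\"ock-type decomposition of $-\bP\divG\nablaG$, act as bounded multiplicative perturbations. The final constants $C_{LB,2}$ and $C_{BL,2}$ then depend only on the atlas, the partition of unity, and $\|\bW\|_{0,\infty}$, hence only on $\G$.
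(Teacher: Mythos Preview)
The paper does not actually prove this proposition: it is stated as a summary of results, with well-posedness attributed to the Poincar\'e inequalities \eqref{eq:poincare} and the $H^2$ regularity cited from \cite{DziukElliott2013, BonitoDemlowNochetto2020, BenavidesNochettoShakipov2025} for the Laplace--Beltrami case and from \cite{BenavidesNochettoShakipov2025} for the Bochner--Laplace case. Your proposal is therefore not to be compared against a proof in the paper, but is essentially a sketch of what those references carry out---Lax--Milgram for the $H^1$ estimates (your observation that one may take $C_{LB,1}=C_{BL,1}=1$ with the chosen norms is correct), followed by localization to charts and Nirenberg difference quotients for the $H^2$ bounds.

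Two small remarks on the sketch. First, on a $C^3$ surface the local orthonormal tangent frame is only $C^2$, not smooth, though this is enough for your argument. Second, the invocation of the Weitzenb\"ock decomposition and $\bR$ is somewhat beside the point: when you write $-\Delta_B\bu$ in a local frame, the lower-order terms come directly from the Christoffel symbols and their first derivatives, and controlling these by $\|\bu\|_1$ requires only that the metric coefficients be $C^2$ (hence $\G\in C^3$). The Ricci term $\bR$ arises instead when relating $\Delta_B$ to $\bP\divG\nablaG^T$ or to the Hodge Laplacian, which you do not need here.
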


    \subsection{The Stokes problem}
        We start with the following formulation of the surface Stokes problem: given $F = (\bf, f) \in \bbV_{-1,0}$, find $U = (\bu, u) \in \bbV_{1,0}$ such that for all $V = (\bv, v) \in \bbV_{1,0}$:
        \begin{equation} \label{eq:stokes}
            \begin{cases}
                \ba(\bu, \bv) - (u, \divG \bv) & \kern-0.72em = \langle \bf, \bv \rangle_{-1,1}, \\
                (\divG \bu, v) & \kern-0.72em = (f, v).
            \end{cases}
        \end{equation}
        The fact that this system is well-posed follows from Poincar\'e inequality \eqref{eq:poincare} and the following standard result proven in \cite{JankuhnOlshanskiiReusken2018, BenavidesNochettoShakipov2025}.
        \begin{lemma}[surjectivity of $\divG$] \label{lem:surj}
            Let $\G$ be of class $C^2$. Then $\divG : \bH^1_t \to L^2_\#$ is surjective.
        \end{lemma}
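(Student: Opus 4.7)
The plan is to exhibit an explicit bounded right inverse of $\divG$ built from the Laplace--Beltrami operator. Given $f \in L^2_\#$, I first solve the scalar Laplace--Beltrami problem \eqref{eq:lb} with datum $-f$: find $\phi \in H^1_\#$ such that $a(\phi, v) = -(f, v)$ for all $v \in H^1_\#$. Well-posedness holds by Proposition~\ref{prop:elliptic-theory}, and since $-f \in L^2_\#$, the higher-regularity part of the same proposition upgrades this to $\phi \in H^2_\#$ with $\|\phi\|_2 \lesssim \|f\|$.

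I then set $\bu := \nablaG \phi$. Because the covariant gradient of a scalar function is pointwise tangential, $\bu \cdot \bn = 0$ a.e.\ on $\G$; combined with $\phi \in H^2$, this yields $\bu \in \bH^1_t$ and $\|\bu\|_1 \lesssim \|f\|$. To verify $\divG \bu = f$, I integrate by parts on the closed surface $\G$: for every $v \in H^1_\#$,
\[
    (\divG \bu, v) = -(\bu, \nablaG v) = -a(\phi, v) = (f, v).
\]
Since $\int_\G \divG \bu = 0$ automatically by the surface divergence theorem on a closed manifold, this identity actually holds in $L^2_\#$, giving $\divG \bu = f$ and thus surjectivity (with a bounded right inverse, to boot).

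The argument is essentially routine and reduces surjectivity of $\divG$ to $H^2$-elliptic regularity for $\Delta_\G$. The only mild subtlety is that the lemma is stated under $\G \in C^2$ whereas Proposition~\ref{prop:elliptic-theory} is written for $C^3$; one closes this gap by invoking sharper Laplace--Beltrami regularity on $C^2$ hypersurfaces (cf.\ \cite{DziukElliott2013, BenavidesNochettoShakipov2025}), or simply by observing that every subsequent use of this lemma in the paper is under the standing hypothesis $\G \in C^3$. I do not anticipate a genuine obstacle here: the closedness of $\G$ (no boundary terms) together with the mean-zero constraint on $f$ are exactly what make the tangential gradient of the Laplace--Beltrami solution land in $\bH^1_t$ and invert $\divG$.
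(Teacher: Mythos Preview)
Your argument is correct: solving $-\Delta_\G \phi = -f$ and setting $\bu = \nablaG \phi$ is the standard construction of a bounded right inverse for $\divG$, and the $C^2$ versus $C^3$ issue is handled exactly as you say (the paper itself invokes $H^2$-regularity of $-\Delta_\G$ on $C^2$ surfaces in the proof of Lemma~\ref{lem:data-map}). Note, however, that the paper does not supply its own proof of Lemma~\ref{lem:surj}; it simply cites \cite{JankuhnOlshanskiiReusken2018, BenavidesNochettoShakipov2025}, where precisely the gradient-of-Laplace--Beltrami construction you wrote down is used, so your proposal matches the cited argument rather than differing from it.
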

        
        A higher regularity result is proven in \cite{BenavidesNochettoShakipov2025} (also in \cite{OlshanskiiReuskenZhiliakov2021} for $d = 2$). We combine them in the following proposition.
        \begin{proposition}[Stokes] \label{prop:stokes}
            Let $\G$ be of class $C^3$. Then the surface Stokes problem \eqref{eq:stokes} is well-posed, and there exists $C_1 > 0$ depending only on $\G$ such that
            \[
            \|U\|_{1,0} \leq C_1 \|F\|_{-1,0}.
            \]
            If $F \in \bbV_{0,1}$, then the solution $U \in \bbV_{1,0}$ of \eqref{eq:stokes} attains higher regularity, namely, $U \in \bbV_{2,1}$ and there exists a constant $C_2 > 0$ depending only on $\G$ such that
            \[
            \|U\|_{2,1} \leq C_2 \|F\|_{0,1}.
            \]
        \end{proposition}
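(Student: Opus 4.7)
The plan is to establish Part 1 via the Babu\v{s}ka-Brezzi theorem and Part 2 by a bootstrap argument driven by identity \eqref{eq:DeltaB-iden} together with Proposition \ref{prop:elliptic-theory}.

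For Part 1, I would verify the two hypotheses of Brezzi for \eqref{eq:stokes}. Coercivity of $\ba$ on $\bH^1_t$ (and hence a fortiori on the kernel of the divergence) is built into the choice of norm $\|\bv\|_1^2 = \ba(\bv, \bv)$ enabled by Poincar\'e \eqref{eq:poincare}. The continuous inf-sup condition for the pairing $(q, \divG \bv)$ on $L^2_\# \times \bH^1_t$ is equivalent, via the closed range theorem, to the surjectivity of $\divG : \bH^1_t \to L^2_\#$ furnished by Lemma \ref{lem:surj}. Brezzi's theorem then yields existence, uniqueness, and the bound $\|U\|_{1,0} \leq C_1 \|F\|_{-1,0}$.

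For Part 2, I would first reduce to $f = 0$ by a lifting: since $f \in H^1_\# \subset L^2_\#$, the Laplace-Beltrami problem $\DeltaG \phi = f$ has a unique solution $\phi \in H^3_\#$ with $\|\phi\|_3 \leq C \|f\|_1$ (by a standard bootstrap of Proposition \ref{prop:elliptic-theory} permitted by $\G \in C^3$); then $\bz := \nablaG \phi \in \bH^2_t$ satisfies $\divG \bz = f$ and $\|\bz\|_2 \leq C \|f\|_1$. Substituting $\bu = \bu_0 + \bz$ converts \eqref{eq:stokes} into a divergence-free Stokes system ($\divG \bu_0 = 0$) with modified momentum datum $\tilde{\bf} := \bf + \bP \divG \nablaG \bz \in \bL^2_t$, $\|\tilde{\bf}\| \leq C \|F\|_{0,1}$. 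Testing the reduced momentum equation against $\bv = \nablaG v \in \bH^1_t$ for arbitrary $v \in H^2_\#$ and applying \eqref{eq:DeltaB-iden} together with $\divG \bu_0 = 0$ yields
\[
(u, -\DeltaG v) = (\tilde{\bf}, \nablaG v) + (\bR \bu_0, \nablaG v), \qquad \forall v \in H^2_\#,
\]
whose right-hand side is bounded by $C \|F\|_{0,1} \|v\|_1$. Substituting $v = (-\DeltaG)^{-1} g$ for arbitrary $g \in L^2_\#$ and invoking the $H^{-1} \to H^1$ regularity of Laplace-Beltrami, one gets $|(u, g)| \leq C \|F\|_{0,1} \|g\|_{-1}$; density of $L^2_\#$ in $H^{-1}_\#$ and the duality $(H^{-1}_\#)' = H^1_\#$ then upgrade $u$ to $H^1_\#$ with $\|u\|_1 \leq C \|F\|_{0,1}$.

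With $u \in H^1_\#$ in hand, integration by parts in the original momentum equation yields $\ba(\bu, \bv) = \langle \bf - \nablaG u, \bv \rangle_{-1,1}$ for all $\bv \in \bH^1_t$, which is a Bochner-Laplace problem with right-hand side in $\bL^2_t$. Proposition \ref{prop:elliptic-theory} then delivers $\bu \in \bH^2_t$ with $\|\bu\|_2 \leq C(\|\bf\| + \|u\|_1) \leq C \|F\|_{0,1}$, completing Part 2. The main obstacle will be the $u \in L^2_\# \to u \in H^1_\#$ bootstrap: naively one only controls $(u, -\DeltaG v)$ against $\|v\|_1$ for $v \in H^2_\#$, and one must leverage the $H^2 \leftrightarrow L^2$ and $H^1 \leftrightarrow H^{-1}$ isomorphism properties of Laplace-Beltrami to lift this into a genuine $H^1_\#$-estimate. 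Both this upgrade and the validity of \eqref{eq:DeltaB-iden} (via the Ricci tensor $\bR$) depend crucially on $\G \in C^3$.
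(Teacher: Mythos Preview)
The paper does not actually prove Proposition \ref{prop:stokes}; it cites \cite{BenavidesNochettoShakipov2025} (and \cite{OlshanskiiReuskenZhiliakov2021} for $d=2$) and states the result as a black box. Your Part 1 is exactly the one-line justification the paper gives (Poincar\'e \eqref{eq:poincare} plus Lemma \ref{lem:surj} feeding into Brezzi). Your Part 2 is, in substance, the same bootstrap the paper later runs in Step 2 of Theorem \ref{thm:ell}: test the momentum equation with $\nablaG v$, invoke Lemma \ref{lem:stokes-iden} to obtain an ultraweak Laplace-Beltrami equation for the pressure, upgrade $u$ to $H^1_\#$ via the $H^1_\# \leftrightarrow H^{-1}_\#$ isomorphism of $-\DeltaG$, and then read off $\bu \in \bH^2_t$ from the Bochner-Laplace regularity in Proposition \ref{prop:elliptic-theory}. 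So your route is the intended one from the cited references and is internally consistent with the paper's own machinery.

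One avoidable detour: the reduction to $f=0$ via $\bz = \nablaG \phi$ with $\DeltaG \phi = f$ forces you to claim $\phi \in H^3_\#$, i.e., $H^3$-regularity for Laplace-Beltrami on a $C^3$ surface. That is true, but it is strictly beyond what Proposition \ref{prop:elliptic-theory} states, so you are leaning on an unstated result. The lifting is in fact unnecessary: working directly with the full system, the term $(f, \DeltaG v)$ that appears after applying \eqref{eq:DeltaB-iden} with $\divG \bu = f$ can be integrated by parts to $-(\nablaG f, \nablaG v)$ because $f \in H^1_\#$, and is thus already controlled by $\|f\|_1 \|v\|_1$. This yields the same pressure bootstrap without ever needing $\phi \in H^3$ or $\bz \in \bH^2_t$, and keeps every ingredient inside the paper's stated toolbox.
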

        
    \subsection{Decomposition of the Bochner Laplacian}
        The following lemma from \cite[Lemma 2.1]{BenavidesNochettoShakipov2025} is integral to the new method. The result below can be seen as a generalization to any dimension of the vector Laplacian decomposition trick used in \cite[Lemma 2.1]{OlshanskiiReuskenZhiliakov2021} to prove $H^2$-regularity of the surface Stokes problem for $d=2$.
        \begin{lemma}[decomposition of the Bochner Laplacian] \label{lem:stokes-iden}
            If $\G$ is of class $C^3$, then
            \begin{equation}
                \begin{split}
                    \ba(\bv, \nablaG q) & = (\divG \bv, \DeltaG q) - (\bR \bv, \nablaG q), \quad \forall \bv \in \bH^1_t, q \in H^2_\#,
                \end{split}
            \end{equation}
            where $\bR = \tr(\bW)\bW - \bW^2$ with $\bW = \nablaG \bn$ being the Weingarten map.
        \end{lemma}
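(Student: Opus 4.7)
The plan is to follow the chain of equalities already sketched in \eqref{eq:DeltaB-iden}, but argued cleanly from scratch. I would first establish the identity for the dense subspace $\bv \in \bH^2_t, q \in H^2_\#$, and then extend to $\bv \in \bH^1_t$ by density using the continuity of both sides in $\bv$ with respect to the $\bH^1$-norm (since $\nablaG q \in \bH^1$ and $\bR q \in \bH^1$ because $\bR$ involves two derivatives of the $C^3$ parametrization, making it $C^1$).

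Starting from $\ba(\bv, \nablaG q) = (\nablaG \bv, \nablaG \nablaG q)$, the first key ingredient is the symmetry of the covariant Hessian $\nablaG \nablaG q = (\nablaG \nablaG q)^T$ for $q \in H^2_\#$, which is available on a hypersurface of class $C^3$ (Proposition C.5 of \cite{BenavidesNochettoShakipov2025}); this yields
\[
    (\nablaG \bv, \nablaG \nablaG q) = (\nablaG^T \bv, \nablaG \nablaG q).
\]
Next I would integrate by parts: writing the right-hand side as an $L^2$-pairing of a tensor with $\nablaG(\nablaG q)$ and using the tangential divergence theorem on the closed surface $\G$, the boundary term vanishes and one derivative is transferred to $\nablaG^T \bv$, producing
\[
    (\nablaG^T \bv, \nablaG \nablaG q) = -(\bP \divG \nablaG^T \bv, \nablaG q),
\]
where the projection $\bP$ appears because $\nablaG q$ is tangential.

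The crucial geometric step is the Ricci-type commutator identity
\[
    \bP \divG \nablaG^T \bv = \nablaG \divG \bv + \bR \bv, \qquad \bv \in \bH^2_t,
\]
which is Lemma 2.1 of \cite{JankuhnOlshanskiiReusken2018}; this is the point at which the curvature term $\bR = \tr(\bW)\bW - \bW^2$ enters, and I expect it to be the main obstacle since it requires carefully commuting tangential derivatives on $\G$ and tracking the normal components picked up by the Weingarten map. Substituting this commutator and integrating by parts once more transfers the remaining derivative from $\nablaG \divG \bv$ onto $\nablaG q$, producing $\DeltaG q$:
\[
    -(\nablaG \divG \bv, \nablaG q) = (\divG \bv, \DeltaG q),
\]
again with no boundary contribution since $\pa \G = \emptyset$.

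Combining these three steps gives
\[
    \ba(\bv, \nablaG q) = (\divG \bv, \DeltaG q) - (\bR \bv, \nablaG q)
\]
for $\bv \in \bH^2_t$. Finally, since $\bH^2_t$ is dense in $\bH^1_t$ and both sides are continuous linear functionals of $\bv \in \bH^1_t$ (the left by Cauchy--Schwarz with $\nablaG \nablaG q \in \bL^2$ after the initial identification, the right directly since $\DeltaG q \in L^2$ and $\bR \in \bL^\infty$), the identity extends to all $\bv \in \bH^1_t$, completing the proof.
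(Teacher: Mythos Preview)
Your proposal is correct and follows essentially the same route as the paper's own proof: symmetry of the covariant Hessian, one integration by parts, the commutator identity $\bP\divG\nablaG^T\bv=\nablaG\divG\bv+\bR\bv$ from \cite{JankuhnOlshanskiiReusken2018}, a second integration by parts, and then a density argument. The only cosmetic difference is that the paper works with the smooth dense classes $\bC^2_t$ and $C^2_\#$ and passes to the limit in both variables, whereas you work directly with $\bH^2_t\times H^2_\#$ and only need density in $\bv$.
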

        \begin{proof}
            Let $\bv \in \bC^2_t := \{\bv \in \bC^2(\G) : \bv \cdot \bn = 0\}, \phi \in C^2_\# := \{v \in C^2(\G) : (v, 1) = 0\}$. Due to the symmetry of the covariant Hessian \cite[Proposition C.5]{BenavidesNochettoShakipov2025},
            \[(\nablaG \bv, \nablaG \nablaG \phi) = (\nablaG^T \bv, \nablaG \nablaG \phi) = -(\bP \divG \nablaG^T \bv, \nablaG \phi).\]
            From \cite[Lemma 2.1]{JankuhnOlshanskiiReusken2018}, we know that for all $\bv \in \bC^2_t$
            \begin{equation} \label{eq:jor18}
                \bP \divG \nablaG^T \bv = \nablaG \divG \bv + \bR \bv.
            \end{equation}
            Hence,
            \[
            (\nablaG \bv, \nablaG \nablaG \phi) = -(\nablaG \divG \bv + \bR \bv, \nablaG \phi).
            \]
            Integrating the right-hand side by parts yields
            \[
            (\nablaG \bv, \nablaG \nablaG \phi) = (\divG \bv, \DeltaG v) - (\bR \bv, \nablaG \phi).
            \]
            The assertion then follows by the density of $\bC^2_t$ in $\bH^1_t$ and $C^2_\#$ in $H^2_\#$.
        \end{proof}
    
\section{Reformulation and stability} \label{sec:reformulation}
    For the rest of the paper, we will always assume that $\G$ is of class $C^3$ so that the higher-regularity result $U \in \bbV_{2,1}$ for the Stokes problem (Proposition \ref{prop:stokes}) is valid.
    \subsection{Reformulations of the momentum equation}
        Using Lemma \ref{lem:stokes-iden} (decomposition of the Bochner Laplacian), we can test the momentum equation in \eqref{eq:stokes} with $\bv = \nablaG v \in \bH^1_t$ for $v \in H^2_\#$ to arrive at
        \[
            (\divG \bu, \DeltaG v) - (\bR \bu, \nablaG v) - (u, \DeltaG v) = \langle \bf, \nablaG v \rangle_{-1,1}.
        \]
        Next, we recall that $\divG \bu = f$ to write
        \[
            (u, - \DeltaG v) - (\bR \bu, \nablaG v) = \langle \bf, \nablaG v \rangle_{-1,1} - (f, \DeltaG v),
        \]
        for all $v \in H^2_\#$. 

    \subsection{New Stokes system}
        Therefore, we obtain the following new elliptic system:
        \begin{equation} \label{eq:stokes-new}
            \begin{cases}
                \ba(\bu, \bv) - (u, \divG \bv) & \kern-0.72em = \langle \bg, \bv \rangle_{-1,1}, \\
                (u, - \DeltaG v) - (\bR \bu, \nablaG v) & \kern-0.72em = \langle g, v \rangle_{-2,2},
            \end{cases}
        \end{equation}
        for all $V = (\bv, v) \in \bbV_{1,2}$, where $G := (\bg, g) = \sD(\bf, f) \in \bbV_{-1,-2}$ for $(\bf, f) \in \bbV_{-1,0}$ is given by
        \begin{equation} \label{eq:sD}
            \bg = \bf, \qquad g = -\divG \bf - \DeltaG f.
        \end{equation}

    \subsection{Elliptic system} \label{sec:ell-reform}
        It turns out to be convenient to analyze the following general problem with stronger data than \eqref{eq:sD}: given $G = (\bg, g) \in \bbV_{-1,-1}$, find $W := (\bw, w) \in \bbV_{1,1}$ such that for all $V = (\bv, v) \in \bbV_{1,1}$:
        \begin{equation} \label{eq:ell}
            \begin{cases}
                \ba(\bw, \bv) + (\nablaG w, \bv) & \kern-0.72em = \langle \bg, \bv \rangle_{-1,1}, \\
                a(w, v) - (\bR \bw, \nablaG v) & \kern-0.72em = \langle g, v \rangle_{-1,1}.
            \end{cases}
        \end{equation}
        By defining the bilinear form $\sA : \bbV_{1,1} \times \bbV_{1,1} \to \bbR$ as
        \begin{equation} \label{eq:sA}
            \begin{split}
                \sA[W, V] & := \ba(\bw, \bv) + a(w, v) + (\nablaG w, \bv) - (\bR \bw, \nablaG v),
            \end{split}
        \end{equation}
        we can write \eqref{eq:ell} more concisely as follows: given $G \in \bbV_{-1,-1} \cong \bbV_{1,1}'$, find $W \in \bbV_{1,1}$ such that for all $V \in \bbV_{1,1}$
        \begin{equation} \label{eq:ell-pb}
            \sA[W, V] = G[V].
        \end{equation}
        The bilinear form $\sA$ induces a linear operator $\sT : \bbV_{1,1} \to \bbV_{-1,-1}$ given by
        \begin{equation} \label{eq:T}
            \langle \sT W, V \rangle_{\bbV_{-1,-1} \times \bbV_{1,1}} = \sA[W, V], \qquad \forall W, V \in \bbV_{1,1},
        \end{equation}
        whence $\sT$ can be formally written in strong form as follows
        \begin{equation}
            \sT W = \sT(\bw, w) = (- \Delta_B \bw + \nablaG w, -\DeltaG w + \divG (\bR \bw)). 
        \end{equation}
        The question of well-posedness of \eqref{eq:ell-pb} necessitates establishing whether the data map given by \eqref{eq:sD} is an isomorphism. In the next lemma, we shall prove that this is the case.
        \begin{lemma}[data map] \label{lem:data-map}
        If $\G$ is of class $C^2$, then the map $\sD$ induced by \eqref{eq:sD} is an isomorphism from $\bbV_{-1,0}$ to $\bbV_{-1,-2}$ and from $\bbV_{0,1}$ to $\bbV_{0,-1}$.
        \end{lemma}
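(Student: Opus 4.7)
The plan is to exploit the lower-triangular block structure of $\sD(\bf, f) = (\bf,\, -\divG \bf - \DeltaG f)$: the first component is the identity on the velocity slot, while the second acts on $f$ as $-\DeltaG$ up to the additive perturbation $-\divG \bf$ depending only on $\bf$. Consequently, $\sD$ is an isomorphism on each pair of spaces if and only if $-\DeltaG$ is an isomorphism on the corresponding pressure pair---namely $L^2_\# \to H^{-2}_\#$ in the first case and $H^1_\# \to H^{-1}_\#$ in the second---and the inverse is then given explicitly by $\sD^{-1}(\bg, g) = \big(\bg,\, (-\DeltaG)^{-1}(g + \divG \bg)\big)$.

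First I would record boundedness. The tangential divergence, defined by duality $\langle \divG \bf, v \rangle := -\langle \bf, \nablaG v \rangle$, is continuous $\bH^{-1}_t \to H^{-2}_\#$ and $\bL^2_t \to H^{-1}_\#$, because $\nablaG : H^s_\# \to \bH^{s-1}_t$ is bounded for $s = 1, 2$. Similarly $-\DeltaG : H^1_\# \to H^{-1}_\#$ is bounded by integration by parts, and it extends by duality via $\langle -\DeltaG f, v \rangle_{-2,2} := -(f, \DeltaG v)$ for $v \in H^2_\#$ to a bounded map $L^2_\# \to H^{-2}_\#$ since $\DeltaG : H^2_\# \to L^2_\#$ is bounded. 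Both incarnations of $\sD$ are therefore bounded.

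The heart of the argument is the isomorphism $-\DeltaG : L^2_\# \to H^{-2}_\#$, which I would establish by transposition. The $H^2$-regularity portion of Proposition \ref{prop:elliptic-theory} asserts that $-\DeltaG : H^2_\# \to L^2_\#$ is an isomorphism with $\|v\|_2 \leq C_{LB,2} \|\DeltaG v\|$ for $v \in H^2_\#$. Given $h \in H^{-2}_\#$, the functional $q \mapsto \langle h,\, (-\DeltaG)^{-1} q \rangle_{-2,2}$ is bounded on $L^2_\#$ with norm at most $C_{LB,2} \|h\|_{-2}$; by Riesz representation there exists a unique $f \in L^2_\#$ with $(f, q) = \langle h,\, (-\DeltaG)^{-1} q \rangle_{-2,2}$ for every $q \in L^2_\#$, and substituting $q = -\DeltaG v$ with $v \in H^2_\#$ gives $-\DeltaG f = h$ in $H^{-2}_\#$ together with $\|f\| \leq C_{LB,2} \|h\|_{-2}$. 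The companion isomorphism $-\DeltaG : H^1_\# \to H^{-1}_\#$ is exactly the Laplace-Beltrami well-posedness part of Proposition \ref{prop:elliptic-theory}.

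Combining these pieces, the explicit formula for $\sD^{-1}$ yields a bounded inverse in each of the two settings, which proves the claim. The main obstacle is the transposition step extending $-\DeltaG$ to a well-posed operator from $L^2_\#$ to $H^{-2}_\#$; once this is secured, the remainder is routine bookkeeping with the distributional definitions of $\divG$ and $\DeltaG$.
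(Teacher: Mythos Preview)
Your proposal is correct and follows essentially the same approach as the paper: exploit the lower-triangular structure, give the explicit inverse $\sD^{-1}(\bg,g)=(\bg,(-\DeltaG)^{-1}(g+\divG\bg))$, and reduce everything to the isomorphism properties of $-\DeltaG$ obtained by duality/transposition from the $H^2$-regularity of the Laplace--Beltrami problem. One minor caveat: Proposition~\ref{prop:elliptic-theory} as stated assumes $\G$ of class $C^3$, whereas the lemma only assumes $C^2$; the paper's proof handles this by citing the $H^2$-regularity for $C^2$ surfaces directly from \cite{DziukElliott2013, BonitoDemlowNochetto2020} rather than invoking Proposition~\ref{prop:elliptic-theory}.
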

        \begin{proof}
            For $\G$ of class $C^2$, $H^2$-regularity for the Laplace-Beltrami operator is valid \cite{DziukElliott2013, BonitoDemlowNochetto2020}. Therefore, $\DeltaG: H^2_\# \to L^2_\#$ is an isomophism, and by duality so is its adjoint $\DeltaG : L^2_\# \to H^{-2}_\#$. The forward map $(\bf, f) \mapsto (\bf, -\divG \bf - \DeltaG f)$ is continuous from $\bbV_{-1,0}$ to $\bbV_{-1,-2}$ because $\divG : \bH^{-1}_t \to H^{-2}_\#$ and $\DeltaG: L^2_\# \to H^{-2}_\#$ are continuous. Moreover, the inverse map is given by $(\bg, g) \mapsto (\bg, (-\DeltaG)^{-1}(\divG \bg + g))$ and is continuous from $\bbV_{-1,-2}$ to $\bbV_{-1,0}$ because $\DeltaG : L^2_\# \to H^{-2}_\#$ is an isomorphism and $\divG : \bH^{-1}_t \to H^{-2}_\#$ is continuous. This proves that $\sD : \bbV_{-1,0} \to \bbV_{-1,-2}$ is an isomorphism. The second result, namely that $\sD: \bbV_{0,1} \to \bbV_{0,-1}$ is also an isomorphism, follows by similar argumentation combined with the facts that $\DeltaG : H^1_\# \to H^{-1}_\#$ is an isomorphism and $\divG : \bL^2_t \to H^{-1}_\#$ is continuous. This concludes the proof.
        \end{proof}
        Using the newly proven result, we can now show that \eqref{eq:ell-pb} is well-posed.
        \begin{theorem}[elliptic reformulation] \label{thm:ell}
            Let $\G$ be of class $C^3$. For all $G = (\bg, g) \in \bbV_{-1,-1}$, there exists unique $W = (\bw, w) \in \bbV_{1,1}$ that solves \eqref{eq:ell-pb}. Furthermore, the following stability bound is valid for some constant $C_3 > 0$ depending only on $\G$:
            \begin{equation} \label{eq:ell-stab-in-1-1}
                \|W\|_{1,1} \leq C_3 \|G\|_{-1,-1}.
            \end{equation}
            Equivalently, the operator $\sT$ defined in \eqref{eq:T} is an isomorphism. 
            Moreover, if $G \in \bbV_{0,0}$, then the solution $W \in \bbV_{1,1}$ of \eqref{eq:ell-pb} admits higher regularity, namely $W \in \bbV_{2,2}$, and there exists constant $C_3' > 0$ depending only on $\G$ such that
            \begin{equation} \label{eq:ell-stab-in-2-2}
                \|W\|_{2,2} \leq C_3' \|G\|_{0,0}.
            \end{equation}
        \end{theorem}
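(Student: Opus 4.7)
The plan is to establish well-posedness of \eqref{eq:ell-pb} via a Gårding-type inequality combined with the Fredholm alternative, where injectivity is obtained by reducing the homogeneous system to the homogeneous Stokes problem \eqref{eq:stokes}; the higher-regularity bound \eqref{eq:ell-stab-in-2-2} then follows by decoupling the two equations of \eqref{eq:ell} and invoking Proposition \ref{prop:elliptic-theory}. The main technical obstacle will be the injectivity step.

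I would begin by verifying that $\sA$ is continuous on $\bbV_{1,1}\times\bbV_{1,1}$ and satisfies a Gårding inequality. Writing
\[
\sA[W,W] \;=\; \|\nablaG\bw\|^2+\|\nablaG w\|^2+(\nablaG w,\bw)-(\bR\bw,\nablaG w),
\]
and applying Young's inequality to the two cross terms (the second using $\|\bR\|_{0,\infty}<\infty$), one absorbs a fraction of $\|\nablaG w\|^2$ at the price of a multiple of $\|\bw\|^2$; together with the Poincaré inequalities \eqref{eq:poincare} this yields $\sA[W,W]\ge c_1\|W\|_{1,1}^2-c_2\|W\|_{0,0}^2$. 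Because $\bbV_{1,1}\embeds\bbV_{0,0}$ is compact, the Fredholm alternative reduces the well-posedness of \eqref{eq:ell-pb} and the stability bound \eqref{eq:ell-stab-in-1-1} to proving that $\sA[W,V]=0$ for every $V\in\bbV_{1,1}$ implies $W=0$.

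For injectivity I would reduce the homogeneous problem to homogeneous Stokes. Taking $V=(\bv,0)$ and integrating $(\nablaG w,\bv)=-(w,\divG\bv)$ by parts (valid for $w\in H^1_\#$ and $\bv\in\bH^1_t$ on a boundaryless surface) yields $\ba(\bw,\bv)-(w,\divG\bv)=0$ for all $\bv\in\bH^1_t$, i.e., the homogeneous momentum equation of \eqref{eq:stokes}. The missing continuity equation $\divG\bw=0$ is recovered by testing this identity with $\bv=\nablaG q$ for $q\in H^2_\#$ and invoking Lemma \ref{lem:stokes-iden} to rewrite $\ba(\bw,\nablaG q)=(\divG\bw,\DeltaG q)-(\bR\bw,\nablaG q)$, which gives $(w,\DeltaG q)=(\divG\bw,\DeltaG q)-(\bR\bw,\nablaG q)$. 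Simultaneously, the second equation of \eqref{eq:ell} with $v=q$ yields $a(w,q)=(\bR\bw,\nablaG q)$, and integration by parts on $a(w,q)$ gives $a(w,q)=-(w,\DeltaG q)$; substituting cancels the $(w,\DeltaG q)$ and $(\bR\bw,\nablaG q)$ contributions and leaves $(\divG\bw,\DeltaG q)=0$ for every $q\in H^2_\#$. Since $\divG\bw\in L^2_\#$ (tangent fields on a closed surface have zero-mean divergence) and $\DeltaG\colon H^2_\#\to L^2_\#$ is surjective by Proposition \ref{prop:elliptic-theory}, this forces $\divG\bw=0$; then $(\bw,w)$ solves the homogeneous Stokes system and Proposition \ref{prop:stokes} forces $W=0$.

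Finally, \eqref{eq:ell-stab-in-2-2} follows by a short bootstrap. For $G\in\bbV_{0,0}$, the $\bbV_{1,1}$ solution produced above satisfies $\bR\bw\in\bH^1_t$ (since $\bR$ has Lipschitz entries whenever $\G\in C^3$), hence $\divG(\bR\bw)\in L^2_\#$; reading the second equation of \eqref{eq:ell} as a Laplace–Beltrami problem for $w$ with right-hand side $g+\divG(\bR\bw)\in L^2_\#$ yields $w\in H^2_\#$ with $\|w\|_2\le C(\|g\|+\|\bw\|_1)$ via Proposition \ref{prop:elliptic-theory}. Then $\nablaG w\in\bH^1_t$, and the first equation of \eqref{eq:ell} becomes a Bochner–Laplace problem for $\bw$ with data $\bg-\nablaG w\in\bL^2_t$, giving $\bw\in\bH^2_t$ and $\|\bw\|_2\le C(\|\bg\|+\|w\|_2)$. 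Chaining these estimates with \eqref{eq:ell-stab-in-1-1} produces \eqref{eq:ell-stab-in-2-2}.
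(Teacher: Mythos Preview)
Your proof is correct but follows a genuinely different route from the paper's. The paper proves existence \emph{constructively}: given $G\in\bbV_{-1,-1}\subset\bbV_{-1,-2}$, it inverts the data map $\sD$ of Lemma~\ref{lem:data-map} to obtain Stokes data $F\in\bbV_{-1,0}$, solves Stokes via Proposition~\ref{prop:stokes} to get $W\in\bbV_{1,0}$, and then reads the second equation as an ultraweak Laplace--Beltrami problem to upgrade $w$ from $L^2_\#$ to $H^1_\#$. You instead derive the G\r{a}rding inequality first (which the paper only proves \emph{after} Theorem~\ref{thm:ell}, in Proposition~\ref{prop:prop-of-sA}), invoke compactness of $\bbV_{1,1}\embeds\bbV_{0,0}$ to reduce to the Fredholm alternative, and then establish injectivity by showing any element of $\ker\sT$ solves homogeneous Stokes. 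Both arguments ultimately rest on Proposition~\ref{prop:stokes} and Lemma~\ref{lem:stokes-iden}, but the paper exploits Stokes \emph{existence} while you exploit Stokes \emph{uniqueness}. Your approach is the more standard PDE template and makes clear why a G\r{a}rding inequality suffices; the paper's approach is more direct, avoids the abstract Fredholm machinery, and foregrounds the explicit isomorphism between Stokes and the elliptic system. For the higher-regularity step your bootstrap is essentially the same as the paper's, though note you do not actually need $w\in H^2_\#$ before treating the momentum equation: $w\in H^1_\#$ already gives $\bg-\nablaG w\in\bL^2_t$, so the two equations can be upgraded independently.
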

        \begin{proof}
            The proof of the first result is done in two steps. By relating \eqref{eq:ell-pb} to Stokes \eqref{eq:stokes}, we can deduce that for all $G = (\bg, g) \in \bbV_{-1,-2}$ there exists unique $W = (\bw, w) \in \bbV_{1,0}$ that solves \eqref{eq:stokes-new} for arbitrary $G \in \bbV_{-1,-2}$. Then, assuming further that $G \in \bbV_{-1,-1} \subset \bbV_{-1,-2}$, we can improve the regularity of $w$ from $L^2_\#$ to $H^1_\#$. \bigskip

            \noindent \textbf{Step 1}: \textit{Existence of solution $W \in \bbV_{1,0}$}. By Lemma \ref{lem:data-map} (data map), given an arbitrary $G \in \bbV_{-1,-2}$, we can always construct an $F \in \bbV_{-1,0}$ isomorphic to $G$. Then, by construction, the solution $W = \tilde U \in \bbV_{1,0}$ of the Stokes problem \eqref{eq:stokes} with data $F$ solves the following elliptic problem with data $G$:
            \begin{equation} \label{eq:ell-pb-weaker}
                \begin{cases}
                    \ba(\bw, \bv) - (w, \divG \bv) & \kern-0.72em = \langle \bg, \bv \rangle_{-1,1}, \\
                    (w, - \DeltaG v) - (\bR \bw, \nablaG v) & \kern-0.72em = \langle g, v \rangle_{-2,2}.
                \end{cases}
            \end{equation}
            Moreover, $W \in \bbV_{1,0}$ depends continuously on $G \in \bbV_{-1,-2}$, namely, $C_3'' > 0$ depending only on $\G$ such that $\|W\|_{1,0} \leq C_3'' \|G\|_{-1,-2}$. \bigskip

            \noindent \textbf{Step 2}: \textit{Well-posedness of \eqref{eq:ell-pb}}. We next recall that $G \in \bbV_{-1,-1} \subset \bbV_{-1,-2}$ and $\|G\|_{-1,-2} \leq \|G\|_{-1,-1}$. By Step 1, we can construct a unique $W \in \bbV_{1,0}$ such that
            \begin{equation} \label{eq:bound-on-W-in-1-0}
                \|W\|_{1,0} \leq C_3'' \|G\|_{-1,-1}.
            \end{equation}
            Since $G \in \bbV_{-1,-1}$ and thus $g \in H^{-1}_\#$, we infer that for all $v \in H^2_\#$, $\langle g, v \rangle_{-2,2} = \langle g, v \rangle_{-1,1}$. Thus, the second equation in \eqref{eq:ell-pb-weaker} reads
            \[
            (w, - \DeltaG v) = \langle g, v \rangle_{-1,1} + (\bR \bw, \nablaG v), \qquad \forall v \in H^2_\#.
            \]
            This is an ultraweak formulation of the Laplace-Beltrami problem with the right-hand side in $H^{-1}_\#$ because $\bw \in \bH^1_t$ and $\bR$ is of class $C^1$ for $\G$ of class $C^3$. Proposition \ref{prop:elliptic-theory} (elliptic theory) ensures that $\DeltaG : H^1_\# \to H^{-1}_\#$ is an isomorphism, hence we can infer that $w \in H^1_\#$ and
            \begin{equation} \label{eq:bound-on-w-in-1}
                \begin{split}
                    \|w\|_{1} & \leq C_{LB,1} \|g - \divG (\bR \bw)\|_{-1} \leq C_{LB,1} (\|g\|_{-1} + \|\bR\|_{0,\infty} \|\bw\|) \\
                    & \leq C_3''' (\|g\|_{-1} + \|\bw\|_1) \leq C_3''' (\|G\|_{-1,-1} + \|W\|_{1,0}) \leq 2 C_3''' C_3'' \|G\|_{-1,-1},
                \end{split}
                \end{equation}
            where in the last step we used \eqref{eq:bound-on-W-in-1-0}. Since $w \in H^1_\#$, we can integrate terms involving $w$ in \eqref{eq:ell-pb-weaker} by parts, thereby yielding
            \[
            \begin{cases}
                \ba(\bw, \bv) + (\nablaG w, \bv) & \kern-0.72em = \langle \bg, \bv \rangle_{-1,1}, \\
                a(w, v) - (\bR \bw, \nablaG v) & \kern-0.72em = \langle g, v \rangle_{-1,1}.
            \end{cases}
            \]
            The left-hand side is precisely the definition \eqref{eq:sA} of $\sA$. Finally, combining \eqref{eq:bound-on-W-in-1-0} and \eqref{eq:bound-on-w-in-1}, we deduce that there exists $C_3 > 0$ depending only on $\G$ such that
            \[
            \|W\|_{1,1} \leq C_3 \|G\|_{-1,-1}.
            \]
            This concludes the proof of estimate \eqref{eq:ell-stab-in-1-1}. \bigskip
            
            \noindent \textbf{Step 3}: \textit{Higher regularity}. For the estimate \eqref{eq:ell-stab-in-2-2}, we assume that $G \in \bbV_{0,0}$. Since $G \in \bbV_{0,0} \subset \bbV_{-1,-1}$, from Step 2 we know that there exists unique $W \in \bbV_{1,1}$ that solves \eqref{eq:ell}. We first observe that $\bw \in \bH^1_t$ solves
            \[
            \ba(\bw, \bv) = (\bg - \nablaG w, \bv), \qquad \forall \bv \in \bH^1_t,
            \]
            with $\bg - \nablaG w \in \bL^2_t$. Hence, by Proposition \ref{prop:elliptic-theory} (elliptic theory), we infer that $\bw \in \bH^2_t$ and it depends continuously on $\bg - \nablaG w \in \bL^2_t$, and thus on $G \in \bbV_{0,0}$. The same argument can be repeated for $w \in H^1_\#$ that solves 
            \[
            a(w, v) = (g - \divG (\bR \bw), v), \qquad \forall v \in H^1_\#,
            \]
            thereby resulting in $w \in H^2_\#$ depending continuously on $G \in \bbV_{0,0}$. This yields \eqref{eq:ell-stab-in-2-2} and concludes the proof.
        \end{proof}
        The fact that problem \eqref{eq:ell-pb} is well-posed is a rather surprising result since $\sA$ is non-coercive in general. Since we arrived at the definition of $\sA$ by starting from the Stokes problem \eqref{eq:stokes}, it might be tempting to think that Theorem \ref{thm:ell} (elliptic reformulation) gives a super-regularity result for the Stokes problem \eqref{eq:stokes}, namely that $F = (\bf, f) \in \bbV_{-1,0}$ implies $U = (\bu, u) \in \bbV_{1,1}$. This, however, is not the case because the data map $\sD$ from \eqref{eq:sD} is not an isomorphism between $\bbV_{-1,0}$ and $\bbV_{-1,-1}$. With the same reasoning, $F = (\bf, f) \in \bbV_{0,1}$ would not yield $U = (\bu, u) \in \bbV_{2,2}$. What can be shown, however, is the following higher regularity result for Stokes which follows from Theorem \ref{thm:ell}, the data isomorphism $\sD$ in \eqref{eq:sD} and Lemma \ref{lem:stokes-iden} (decomposition of the Bochner Laplacian). We need two auxiliary spaces:
        \begin{equation} \label{eq:hdiv}
            \bH^{-1}_t(\divG) := \{\bv \in \bH^{-1}_t : \divG \bv \in H^{-1}_\#\}, \qquad \bH_t(\divG) := \{\bv \in \bL^2_t : \divG \bv \in L^2_\#\}.
        \end{equation}
        \begin{corollary}[higher regularity for Stokes] \label{cor:reg-res-stokes}
            Let $\G$ be of class $C^3$. If $F = (\bf, f) \in \bH^{-1}_t(\divG) \times H^1_\#$, then the solution $U = (\bu, u) \in \bbV_{1,0}$ of \eqref{eq:stokes} admits higher regularity, namely, $U \in \bbV_{1,1}$. Moreover, if $F = (\bf, f) \in \bH_t(\divG) \times H^2_\#$, then the solution $U = (\bu, u) \in \bbV_{1,0}$ of \eqref{eq:stokes} is in fact in $\bbV_{2,2}$.
        \end{corollary}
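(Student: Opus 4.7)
The plan is to push the data regularity of $F$ through the isomorphism $\sD$ of \eqref{eq:sD} and then invoke the regularity assertions of Theorem \ref{thm:ell}. The key observation, already used in Step 1 of the proof of that theorem, is that for any $F \in \bbV_{-1,0}$ the Stokes solution $U \in \bbV_{1,0}$ also solves the weaker elliptic system \eqref{eq:ell-pb-weaker} with data $G = \sD F \in \bbV_{-1,-2}$. Moreover, this correspondence $F \leftrightarrow U$ is a bijection at the level of $\bbV_{-1,0} \to \bbV_{1,0}$, because $\sD : \bbV_{-1,0} \to \bbV_{-1,-2}$ is an isomorphism (Lemma \ref{lem:data-map}) and Stokes is well-posed (Proposition \ref{prop:stokes}); in particular, solutions of \eqref{eq:ell-pb-weaker} in $\bbV_{1,0}$ are unique for each $G \in \bbV_{-1,-2}$. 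It therefore suffices to upgrade the regularity of the \emph{elliptic} solution $W$ and then identify $W = U$.

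For the first claim, I would check that when $F = (\bf, f) \in \bH^{-1}_t(\divG) \times H^1_\#$, the image $G = (\bf, -\divG \bf - \DeltaG f)$ actually lies in $\bbV_{-1,-1}$: indeed $\divG \bf \in H^{-1}_\#$ by the very definition of $\bH^{-1}_t(\divG)$, and $\DeltaG f \in H^{-1}_\#$ whenever $f \in H^1_\#$. Theorem \ref{thm:ell} then produces a unique $W \in \bbV_{1,1}$ solving the strong elliptic system \eqref{eq:ell-pb}. Since the pressure component $w$ of $W$ now lies in $H^1_\#$, one may reverse the integration by parts performed in Step 2 of that theorem and conclude that $W$ also solves the weaker system \eqref{eq:ell-pb-weaker}; the uniqueness noted above then forces $U = W \in \bbV_{1,1}$.

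For the second claim, the same data computation with $F = (\bf, f) \in \bH_t(\divG) \times H^2_\#$ lands $G$ in $\bbV_{0,0}$, so the higher-regularity part of Theorem \ref{thm:ell} upgrades $W$ to $\bbV_{2,2}$, and identifying $U = W$ once more concludes the argument. I expect the main obstacle to be primarily a bookkeeping one, namely keeping straight the interplay between the three systems (Stokes \eqref{eq:stokes}, the weak elliptic form \eqref{eq:ell-pb-weaker}, and the strong elliptic form \eqref{eq:ell-pb}) and ensuring at each step that $\sD$ is being used in the direction where it is an isomorphism; the actual computations reduce to standard mapping properties of $\divG$ and $\DeltaG$ on $\G$.
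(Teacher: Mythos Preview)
Your strategy is correct and mirrors the paper's: compute $G=\sD F$, check it lands in $\bbV_{-1,-1}$ (resp.\ $\bbV_{0,0}$), invoke Theorem~\ref{thm:ell} to produce $W\in\bbV_{1,1}$ (resp.\ $\bbV_{2,2}$), and then identify $W=U$. The only real difference is in that last identification. The paper closes the loop \emph{forward}: it shows directly, via Lemma~\ref{lem:stokes-iden} and the surjectivity of $\DeltaG:H^2_\#\to L^2_\#$, that the elliptic solution $W$ satisfies $\divG\bw=f$ and hence solves Stokes, whence $W=U$ by Proposition~\ref{prop:stokes}. You instead close it \emph{backward}: undo the integration by parts so that $W$ solves the weaker system \eqref{eq:ell-pb-weaker}, note that $U$ solves it too, and conclude $W=U$ by uniqueness of \eqref{eq:ell-pb-weaker}. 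Both routes are valid and ultimately rest on the same equivalence between Stokes and \eqref{eq:ell-pb-weaker} announced in the introduction and used in Step~1 of Theorem~\ref{thm:ell}.

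One caution on your wording: the sentence ``$F\leftrightarrow U$ is a bijection $\bbV_{-1,0}\to\bbV_{1,0}$ \ldots\ in particular, solutions of \eqref{eq:ell-pb-weaker} are unique'' is not by itself a complete justification. The bijection $G\mapsto U$ (via $\sD^{-1}$ and Stokes) furnishes a canonical solution of \eqref{eq:ell-pb-weaker} for each $G$, but it does not rule out further solutions; for that you need the \emph{converse} implication, namely that any $\bbV_{1,0}$-solution of \eqref{eq:ell-pb-weaker} solves Stokes with data $\sD^{-1}G$. That converse is exactly the computation the paper carries out (and is the content of the equivalence asserted in the introduction), so you should cite it explicitly rather than the bijection.
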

        \begin{proof}
            Since $G = (\bg, g) = \sD(\bf, f) = (\bf, -\divG \bf - \DeltaG f)$ from \eqref{eq:sD}, the regularity assumption $G \in \bbV_{-1,-1}$ is guaranteed by $\bf \in \bH^{-1}_t(\divG)$ and $f \in H^1_\#$. Then by Theorem \ref{thm:ell}, there exists unique $W = (\bw, w) \in \bbV_{1,1}$ such that
            \[
            \begin{cases}
                \ba(\bw, \bv) + (\nablaG w, \bv) & \kern-0.72em = \langle \bf, \bv \rangle_{-1,1}, \\
                a(w, v) - (\bR \bw, \nablaG v) & \kern-0.72em = \langle \bf, \nablaG v \rangle_{-1,1} - (f, \DeltaG v),
            \end{cases}
            \]
            for all $V = (\bv, v) \in \bbV_{1,1}$. We need to show that $\divG \bw = f$. Using Lemma \ref{lem:stokes-iden}, we infer that the second equation reads
            \[
            a(w, v) + \ba(\bw, \nablaG v) - (\divG \bw, \DeltaG v) = \langle \bf, \nablaG v \rangle_{-1,1} - (f, \DeltaG v).
            \]
            Upon recognizing that $\ba(\bw, \nablaG v) + a(w, v) = \langle \bf, \nablaG v \rangle_{-1,1}$ from the momentum equation tested with $\bv = \nablaG v, v \in H^2_\#$, we arrive at
            \[
            (\divG \bw, \DeltaG v) = (f, \DeltaG v), \qquad \forall v \in H^2_\#.
            \]
            By the $H^2$-regularity of the Laplace-Beltrami problem from Proposition \ref{prop:elliptic-theory} (elliptic theory), we have
            \[
            (\divG \bw, v) = (f, v), \qquad \forall v \in L^2_\#,
            \]
            whence $\divG \bw = f$ a.e. on $\G$. Therefore, $W = (\bw, w) \in \bbV_{1,1}$ satisfies the Stokes problem \eqref{eq:stokes}. This finishes the proof of the first result. The same argument can easily be adapted to prove the second statement. In particular, $\bf \in \bH_t(\divG)$ and $f \in H^2_\#$ ensure that $G \in \bbV_{0,0}$. Then, invoking the higher regularity result \eqref{eq:ell-stab-in-2-2} from Theorem \ref{thm:ell}, namely $(\bw, w) \in \bbV_{2,2}$, and proceeding along the same lines finishes the proof.
        \end{proof}
        
    \subsection{Adjoint problem}
        Theorem \ref{thm:ell} (elliptic reformulation) shows that the linear operator $\sT : \bbV_{1,1} \to \bbV_{-1,-1}$ defined in \eqref{eq:T} is an isomorphism. It is then a classical result from functional analysis \cite[\S 2.7]{Brezis2011} that the adjoint operator $\sT^*: \bbV_{1,1} \to \bbV_{-1,-1}$ is also an isomorphism. To get the explicit expression for $\sT^*$, we use the definition \eqref{eq:sA} of $\sA$ and integrate by parts formally to obtain
        \[
        \sA[W, V] = (\bw, - \Delta_B \bv - \bR \nablaG v) + (w, - \divG \bv - \DeltaG v),
        \]
        thereby yielding the strong form of $\sT^*$
        \[
        \sT^* V = \sT^*(\bv, v) = (- \Delta_B \bv - \bR \nablaG v, - \DeltaG v - \divG \bv).
        \]
        Given an arbitrary $G = (\bg, g) \in \bbV_{-1,-1}$, to write the weak formulation of the adjoint problem $\sT^* V = G$ of \eqref{eq:ell} within the space $\bbV_{1,1}$, we seek $V = (\bv, v) \in \bbV_{1,1}$ such that
        \begin{equation} \label{eq:adjoint}
            \begin{cases}
                \ba(\bv, \by) - (\bR \nablaG v, \by) & \kern-0.72em = \langle \bg, \by \rangle_{-1,1}, \\
                a(v, y) + (\bv, \nablaG y) & \kern-0.72em = \langle g, y \rangle_{-1,1},
            \end{cases}
        \end{equation}
        for all $Y := (\by, y) \in \bbV_{1,1}$. The next result immediately follows from the fact that $\sT^*$ is an isomorphism.
        \begin{corollary}[adjoint] \label{cor:adjoint}
            Let $\G$ be of class $C^3$. For all $G = (\bg, g) \in \bbV_{-1,-1}$, there exists unique $V = (\bv, v) \in \bbV_{1,1}$ that solves \eqref{eq:adjoint}. Furthermore, there exists $C_4 > 0$ depending only on $\G$ such that the following a priori estimate is valid:
            \[
            \|V\|_{1,1} \leq C_4 \|G\|_{-1,-1}.
            \]
        \end{corollary}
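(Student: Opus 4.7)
The plan is to deduce the corollary as a direct consequence of Theorem \ref{thm:ell} (elliptic reformulation) combined with the closed-range / duality principle that already motivated the statement in the text. Since $\sT$ has been shown to be a linear isomorphism from $\bbV_{1,1}$ onto $\bbV_{-1,-1}$, its Banach-space adjoint $\sT^* : \bbV_{1,1}'' \to \bbV_{1,1}'$ is automatically an isomorphism, with $(\sT^*)^{-1} = (\sT^{-1})^*$ and $\|(\sT^*)^{-1}\| = \|\sT^{-1}\|$. The Hilbert spaces $\bbV_{1,1}$ and $\bbV_{-1,-1}$ are reflexive, so $\sT^*$ may be viewed as an isomorphism $\bbV_{1,1} \to \bbV_{-1,-1}$, and any $G \in \bbV_{-1,-1}$ admits a unique preimage $V$ under $\sT^*$ with $\|V\|_{1,1} \leq \|\sT^{-1}\| \cdot \|G\|_{-1,-1}$. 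Choosing $C_4 := C_3$ from \eqref{eq:ell-stab-in-1-1} then yields the desired a priori bound.

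The single nontrivial step is verifying that system \eqref{eq:adjoint} is indeed the weak formulation of the equation $\sT^* V = G$ in $\bbV_{-1,-1}$; I would carry this out by unpacking the identity
\[
\langle \sT^* V, W \rangle_{\bbV_{-1,-1} \times \bbV_{1,1}} = \langle \sT W, V \rangle_{\bbV_{-1,-1} \times \bbV_{1,1}} = \sA[W, V] \qquad \forall\, W \in \bbV_{1,1},
\]
with $\sA$ as defined in \eqref{eq:sA}. Testing against $W = (\by, 0)$ isolates the rows involving $\by$ and yields $\ba(\by, \bv) + (\nablaG v \cdot, \by)$-type contributions; using symmetry of $\ba$ and of the $L^2$-inner product, this matches the first equation of \eqref{eq:adjoint}. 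Testing against $W = (0, y)$ similarly matches the second equation. Hence \eqref{eq:adjoint} is equivalent to $\sT^* V = G$.

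With this equivalence in place, existence, uniqueness and the stability bound for \eqref{eq:adjoint} follow immediately from the isomorphism property of $\sT^*$. No new analytic estimates are required, as all regularity and well-posedness work has already been absorbed into Theorem \ref{thm:ell}. The main (minor) obstacle is strictly bookkeeping: matching the bilinear form $\sA[W, V]$ term by term against the two equations in \eqref{eq:adjoint} and confirming that the transposition of arguments between test and trial functions is compatible with the symmetry of the Dirichlet form $\ba$ and the scalar product, so that no extra curvature terms are missed when writing $\sT^*$ in weak form.
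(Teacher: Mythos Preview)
Your proposal is correct and follows precisely the paper's own reasoning: the paper states that Corollary~\ref{cor:adjoint} ``immediately follows from the fact that $\sT^*$ is an isomorphism,'' having already derived the weak form \eqref{eq:adjoint} of $\sT^* V = G$ from the definition of $\sA$. Your bookkeeping sketch (test with $W=(\by,0)$ and $W=(0,y)$, use symmetry of $\ba$, $a$, and of $\bR$) is exactly what is needed to match $\sA[W,V]$ against the two lines of \eqref{eq:adjoint}, and the choice $C_4 = C_3$ via $\|(\sT^*)^{-1}\| = \|\sT^{-1}\|$ is legitimate.
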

        The following Lemma will be instrumental in the Aubin-Nitsche duality argument of Section \ref{sec:duality-arg}, which is necessary to deduce the well-posedness of the problem at the discrete level. Its proof is very similar to the proof of \eqref{eq:ell-stab-in-2-2} in Theorem \ref{thm:ell} (elliptic reformulation) and will therefore be omitted.
        \begin{lemma}[higher regularity of the adjoint problem] \label{lem:adjoint-regularity}
            Let $\G$ be of class $C^3$. If $G = (\bg, g) \in \bbV_{0,0}$, then the solution $V = (\bv, v) \in \bbV_{1,1}$ of \eqref{eq:adjoint} admits higher regularity, namely, $V \in \bbV_{2,2}$, and there exists $C_5 > 0$ depending only on $\G$ such that
            \[
            \|V\|_{2,2} \leq C_5 \|G\|_{0,0}.
            \]
        \end{lemma}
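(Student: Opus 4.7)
The plan is to follow Step 3 of the proof of Theorem \ref{thm:ell} almost verbatim, since the adjoint system \eqref{eq:adjoint} has exactly the same structure as the primal system \eqref{eq:ell}, with the roles of the off-diagonal coupling terms merely interchanged. The starting point is the existence of a unique solution $V = (\bv, v) \in \bbV_{1,1}$ guaranteed by Corollary \ref{cor:adjoint}, together with the baseline bound $\|V\|_{1,1} \leq C \|G\|_{-1,-1} \leq C \|G\|_{0,0}$ obtained via the embedding $\bbV_{0,0} \embeds \bbV_{-1,-1}$. From there I would bootstrap by reading each equation in \eqref{eq:adjoint} as a standard elliptic problem with an improved right-hand side.

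First, I would treat the first line of \eqref{eq:adjoint}, which after rearrangement reads $\ba(\bv, \by) = (\bg + \bR \nablaG v, \by)$ for all $\by \in \bH^1_t$. Since $\G$ is of class $C^3$, the Weingarten map is of class $C^1$ and hence $\bR \in L^\infty$; combined with $v \in H^1_\#$ and $\bg \in \bL^2_t$, this places the right-hand side in $\bL^2_t$. The higher regularity statement of Proposition \ref{prop:elliptic-theory} applied to the Bochner-Laplace problem then gives $\bv \in \bH^2_t$ with continuous dependence on $\|\bg\| + \|v\|_1$, hence ultimately on $\|G\|_{0,0}$. With $\bv \in \bH^2_t \subset \bH_t(\divG)$ in hand, and since $\G$ is closed, integration by parts yields $(\bv, \nablaG y) = -(\divG \bv, y)$ for all $y \in H^1_\#$, so the second equation of \eqref{eq:adjoint} becomes $a(v, y) = (g + \divG \bv, y)$ with right-hand side in $L^2_\#$ (the zero-mean condition holds because $\int_\G \divG \bv = 0$ on a closed surface). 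A second application of Proposition \ref{prop:elliptic-theory}, this time to the Laplace-Beltrami problem, then yields $v \in H^2_\#$ with $\|v\|_2 \leq C (\|g\| + \|\bv\|_2)$, and chaining the two estimates produces the claimed bound $\|V\|_{2,2} \leq C_5 \|G\|_{0,0}$.

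I do not anticipate any genuine obstacle, since the argument is essentially a line-by-line transcription of Step 3 of the proof of Theorem \ref{thm:ell}. The only points requiring attention are the $L^\infty$-regularity of $\bR$, which is guaranteed by the $C^3$ assumption on $\G$, and the compatibility $\int_\G \divG \bv = 0$ needed to place $g + \divG \bv$ in $L^2_\#$; both are standard and are implicitly used in the companion result, which is why the authors reasonably elect to omit the details.
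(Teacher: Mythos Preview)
Your proposal is correct and follows exactly the approach the paper intends: the authors explicitly omit the proof, stating it is ``very similar to the proof of \eqref{eq:ell-stab-in-2-2} in Theorem \ref{thm:ell},'' i.e., the bootstrap of Step 3 via Proposition \ref{prop:elliptic-theory} applied to each equation in turn, which is precisely what you carry out. The only remark is that you do not actually need $\bv \in \bH^2_t$ before treating the second equation---$\bv \in \bH^1_t$ already gives $\divG \bv \in L^2_\#$---but this harmless redundancy does not affect the validity of the argument.
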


    \subsection{G\r{a}rding's inequality}
        In view of Theorem \ref{thm:ell} (elliptic reformulation), the Banach-Ne\v{c}as theorem \cite[Theorem 2.6]{ErnGuermond2004} ensures that $\sA$ is continuous, the kernel of the adjoint problem is trivial, and the following inf-sup constant $c_\sA > 0$ depends only on $\G$:
        \begin{equation}
            c_\sA := \inf_{0 \neq W \in \bbV_{1,1}} \sup_{0 \neq V \in \bbV_{1,1}} \frac{\sA[W, V]}{\|W\|_{1,1} \|V\|_{1,1}} > 0.
        \end{equation}
        We shall next show that $\sA$ satisfies a G\r{a}rding's inequality, which will become crucial later for discretization. For a specific class of hypersurfaces $\G$, $\sA$ turns out to be coercive, as shown in the next section. Then the well-posedness of \eqref{eq:ell-pb} can be deduced by the Lax-Milgram lemma.
        \begin{proposition}[properties of $\sA$] \label{prop:prop-of-sA}
            The bilinear form $\sA$ defined in \eqref{eq:sA} is continuous and satisfies G\r{a}rding's inequality for all $V, W \in \bbV_{1,1}$
            \[
            \begin{split}
                |\sA[W, V]| & \leq C_{\sA} \|W\|_{1,1} \|V\|_{1,1}, \\
                c_{\sA, 1} \|W\|_{1,1}^2 - c_{\sA, 2} \|W\|_{0,0}^2 & \leq \sA[W, W],
            \end{split}
            \]
            where $c_{\sA,1}, c_{\sA,2}, C_{\sA} > 0$ are defined as
            \[
            c_{\sA,1} := \frac12, \quad c_{\sA,2} := \frac12 \|\bP - \bR\|_{0,\infty}^2, \quad C_{\sA} := 1 + C_{P,t} \max\{1,\|\bR\|_{0,\infty}\},
            \]
            with $C_{P,t}$ being the Poincar\'e constant for tangent vector fields from \eqref{eq:poincare}. 
        \end{proposition}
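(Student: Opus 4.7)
The plan is to treat continuity and the G\r{a}rding inequality separately, handling each of the four terms of $\sA$ by Cauchy--Schwarz, and using the Poincar\'e inequality \eqref{eq:poincare} to control the two cross terms $(\nablaG w, \bv)$ and $(\bR \bw, \nablaG v)$ in which the $H^1$-norm does not directly pair against an $H^1$-norm.

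For the continuity bound, the two diagonal Dirichlet forms give $|\ba(\bw, \bv)| \le \|\bw\|_1 \|\bv\|_1$ and $|a(w,v)| \le \|w\|_1 \|v\|_1$ by Cauchy--Schwarz and the definitions in \eqref{eq:dir-forms}. For the off-diagonal terms, Cauchy--Schwarz followed by Poincar\'e on the $L^2$-factor yields $|(\nablaG w, \bv)| \le \|w\|_1 \|\bv\| \le C_{P,t} \|w\|_1 \|\bv\|_1$ and $|(\bR \bw, \nablaG v)| \le \|\bR\|_{0,\infty} \|\bw\|\|v\|_1 \le C_{P,t}\|\bR\|_{0,\infty}\|\bw\|_1 \|v\|_1$. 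Summing, factoring $\max\{1, \|\bR\|_{0,\infty}\}$ from the cross terms, and applying the elementary vector Cauchy--Schwarz inequality $a_1 b_1 + a_2 b_2 \le (a_1^2 + a_2^2)^{1/2}(b_1^2 + b_2^2)^{1/2}$ separately to the diagonal pair $\|\bw\|_1\|\bv\|_1 + \|w\|_1\|v\|_1$ and the off-diagonal pair $\|w\|_1\|\bv\|_1 + \|\bw\|_1\|v\|_1$ produces exactly $C_\sA = 1 + C_{P,t}\max\{1, \|\bR\|_{0,\infty}\}$.

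For G\r{a}rding's inequality, set $V = W$, so that
\[
\sA[W, W] = \|\bw\|_1^2 + \|w\|_1^2 + (\nablaG w, \bw) - (\bR \bw, \nablaG w).
\]
The crux is to observe that since $\bw \cdot \bn = 0$ a.e.\ (so $\bP \bw = \bw$) and $\bR$ is symmetric (both $\bW$ and $\bW^2$ are), the two sign-indefinite terms collapse to the single bilinear expression $((\bP - \bR)\bw, \nablaG w)$. Young's inequality with the sharp parameter $\tfrac12$ then yields
\[
\bigl|((\bP - \bR)\bw, \nablaG w)\bigr| \le \tfrac12 \|w\|_1^2 + \tfrac12 \|\bP - \bR\|_{0,\infty}^2 \|\bw\|^2,
\]
and substituting gives $\sA[W, W] \ge \|\bw\|_1^2 + \tfrac12 \|w\|_1^2 - \tfrac12 \|\bP - \bR\|_{0,\infty}^2 \|\bw\|^2 \ge \tfrac12 \|W\|_{1,1}^2 - c_{\sA, 2} \|W\|_{0,0}^2$, which is the asserted inequality.

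The only conceptual obstacle is recognizing the cancellation $(\nablaG w, \bw) - (\bR \bw, \nablaG w) = ((\bP - \bR)\bw, \nablaG w)$: it is what allows the remainder to be measured by the clean quantity $\|\bP - \bR\|_{0,\infty}$ rather than by separate norms of $\bI$ and $\bR$, and it is what forces the tuning $\tfrac12$ to be sharp (any smaller absorption coefficient would not close against $\|w\|_1^2$). Everything else is routine Cauchy--Schwarz and Poincar\'e bookkeeping.
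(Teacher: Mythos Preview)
Your proof is correct and follows essentially the same approach as the paper's: both arguments bound the four terms of $\sA$ individually by Cauchy--Schwarz and Poincar\'e for continuity, and both collapse the two cross terms at $V=W$ into $((\bP-\bR)\bw,\nablaG w)$ before applying Young's inequality with parameter $\tfrac12$ for G\r{a}rding. The only cosmetic differences are the order of presentation (the paper proves G\r{a}rding first) and that you make the final $\bbR^2$ Cauchy--Schwarz step in the continuity bound explicit, whereas the paper leaves it implicit.
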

        \begin{proof}
            We first prove the G\r{a}rding's inequality using elementary estimates, namely
            \[
            \begin{split}
                \sA[W, W] & = \ba(\bw, \bw) + a(w, v) + (\nablaG w, \bw) - (\bR \bw, \nablaG w) \\
                & = \|\bw\|_{1}^2 + \|w\|_1^2 + (\nablaG w, (\bP - \bR) \bw) \\
                & \geq \|\bw\|_{1}^2 + \frac12 \|w\|_1^2 - \frac12 \|\bP - \bR\|_{0,\infty}^2 \|\bw\|^2 \\
                & \geq \frac12 \|W\|_{1,1}^2 - \frac12 \|\bP - \bR\|_{0,\infty}^2 \|W\|_{0,0}^2. 
            \end{split}
            \]
            We next verify the continuity of $\sA$
            \[
            \begin{split}
                |\sA[W, V]| & = |\ba(\bw, \bv) + a(w, v) + (\nablaG w, \bv) - (\bR \bw, \nablaG v)| \\
                & \leq \|\bw\|_{1} \|\bv\|_{1} + \|w\|_1 \|v\|_1 + |(\nablaG w, \bv)| + |(\bR \bw, \nablaG v)| \\
                & \leq \|\bw\|_{1} \|\bv\|_{1} + \|w\|_1 \|v\|_1 + \|w\|_1 C_{P,t} \|\bv\|_{1} + \|\bR\|_{0,\infty} C_{P,t} \|\bw\|_1 \|v\|_1.
            \end{split}
            \]
            Altogether, this yields
            \[
            |\sA[W, V]| \leq (1 + C_{P,t} \max\{1,\|\bR\|_{0,\infty}\}) \|W\|_{1,1} \|V\|_{1,1},
            \]
            and concludes the proof.
        \end{proof}
    
    \subsection{Special coercive cases}
        Even though we know that \eqref{eq:ell-pb} is well-posed for any data $G \in \bbV_{-1,-1}$ and any hypersurface $\G$ of class $C^3$, it is still meritorious to ask what assumption on $\G$ ensures that \eqref{eq:ell-pb} is a coercive problem.
        \begin{theorem}[coercivity for a perturbation of the $d$-sphere] \label{thm:ell-coercive}
            Suppose that
            \[
            \sigma := \|\bP - \bR\|_{0,\infty} C_{P,t} < 2.
            \]
            Then for each $G \in \bbV_{-1,-1}$ there a exists unique solution $W \in \bbV_{1,1}$ of \eqref{eq:ell-pb}. Moreover, the following stability estimate is valid:
            \[
            \|W\|_{1,1} \leq \frac{2}{2-\sigma} \|G\|_{-1,-1}.
            \]
        \end{theorem}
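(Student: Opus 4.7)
The plan is to revisit the Gårding inequality argument of Proposition \ref{prop:prop-of-sA} and sharpen the bound on the cross term $(\nablaG w, (\bP - \bR) \bw)$ by using the Poincaré inequality on the tangential factor $\bw$ instead of keeping it in the $L^2$-norm. This will promote the Gårding estimate into a genuine coercivity estimate precisely when $\sigma < 2$, after which Lax--Milgram supplies existence, uniqueness, and the stability bound.

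Concretely, starting from the identity
\[
    \sA[W, W] = \|\bw\|_1^2 + \|w\|_1^2 + (\nablaG w, (\bP - \bR) \bw),
\]
I would estimate
\[
    |(\nablaG w, (\bP - \bR) \bw)| \leq \|\bP - \bR\|_{0,\infty} \, \|w\|_1 \, \|\bw\|
    \leq \|\bP - \bR\|_{0,\infty} C_{P,t} \, \|w\|_1 \, \|\bw\|_1 = \sigma \, \|w\|_1 \|\bw\|_1,
\]
using the tangential Poincaré inequality from \eqref{eq:poincare}. Young's inequality then yields $\sigma \|w\|_1 \|\bw\|_1 \leq \tfrac{\sigma}{2}(\|w\|_1^2 + \|\bw\|_1^2) = \tfrac{\sigma}{2} \|W\|_{1,1}^2$, which gives
\[
    \sA[W, W] \geq \left(1 - \tfrac{\sigma}{2}\right) \|W\|_{1,1}^2 = \frac{2 - \sigma}{2} \|W\|_{1,1}^2.
\]

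Under the standing assumption $\sigma < 2$, this is strict coercivity of $\sA$ on $\bbV_{1,1}$; combined with the continuity of $\sA$ already established in Proposition \ref{prop:prop-of-sA}, the Lax--Milgram lemma produces a unique $W \in \bbV_{1,1}$ solving \eqref{eq:ell-pb} for any $G \in \bbV_{-1,-1}$. Testing with $V = W$ and using the coercivity bound together with the duality pairing estimate $G[W] \leq \|G\|_{-1,-1} \|W\|_{1,1}$ yields $\tfrac{2-\sigma}{2} \|W\|_{1,1}^2 \leq \|G\|_{-1,-1} \|W\|_{1,1}$, hence the claimed stability estimate $\|W\|_{1,1} \leq \tfrac{2}{2-\sigma} \|G\|_{-1,-1}$. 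There is no real obstacle here: the whole point is that the Gårding proof already isolates the offending cross term, and the hypothesis $\sigma < 2$ is calibrated exactly to absorb it via Young's inequality. The only subtlety worth emphasizing in the exposition is that one must apply Poincaré to $\bw$ (not to $w$) to bring in $C_{P,t}$ rather than $C_{P,\#}$, matching the definition of $\sigma$.
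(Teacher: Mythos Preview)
Your proof is correct and follows essentially the same route as the paper: isolate the cross term $(\nablaG w,(\bP-\bR)\bw)$, bound it via the tangential Poincar\'e inequality and Young's inequality to obtain the coercivity constant $\tfrac{2-\sigma}{2}$, and conclude with Lax--Milgram. The only cosmetic difference is that the paper applies a weighted Young inequality before invoking Poincar\'e, whereas you apply Poincar\'e first and then the symmetric Young inequality; both yield the identical bound $\tfrac{\sigma}{2}\|W\|_{1,1}^2$.
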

        \begin{proof}
            We know that $\sA$ is continuous from Proposition \ref{prop:prop-of-sA} (properties of $\sA$). 
            In order to obtain the sharpest coercivity constant, we shall deviate from the proof of G\r{a}rding's inequality from Proposition \ref{prop:prop-of-sA} slightly. We start with
            \[\sA[W, W] = \|\bw\|_{1}^2 + \|w\|_1^2 + (\nablaG w, (\bP - \bR) \bw).\]
            We estimate the last term using the definition of $\sigma$ together with the Poincar\'e inequality from \eqref{eq:poincare} for vector fields
            \[ \begin{split}
                |(\nablaG w, (\bP - \bR) \bw)| & \leq \|\nablaG w\| \|\bP - \bR\|_{0,\infty} \|\bw\| \\
                & \leq \frac{\sigma}{2} \|\nablaG w\|^2 + \frac{1}{2 \sigma} \|\bP - \bR\|_{0,\infty}^2 C_{P,t}^2 \|\nablaG \bw\|^2 \\
                & = \frac{\sigma}{2} \|\nablaG w\|^2 + \frac{\sigma^2}{2 \sigma} \|\nablaG \bw\|^2 = \frac{\sigma}{2} \|W\|_{1,1}^2.
            \end{split} \]
            Altogether, we obtain
            \[
            \sA[W, W] \geq \|W\|_{1,1}^2 - \frac{\sigma}{2}\|W\|_{1,1}^2 = \frac{2 - \sigma}{2} \|W\|_{1,1}^2.
            \]
            By the assumption that $\sigma < 2$, we infer coercivity of $\sA$. Thus, by the Lax-Milgram theorem, problem \eqref{eq:ell-pb} is well-posed, and we get the asserted stability estimate.
        \end{proof}
    
        \begin{corollary}[$\G = 2$-sphere]
            Let $\G$ be the 2-dimensional sphere of radius $r > 0$. Then \eqref{eq:ell-pb} is symmetrizable, admits a unique solution $W \in \bbV_{1,1}$ for every $G \in \bbV_{-1,-1}$, and the following stability bound is valid:
            \[
            \|\bw\|_1^2 + r^2 \|w\|_1^2 \leq \|\bg\|_{-1}^2 + r^2 \|g\|_{-1}^2.
            \]
        \end{corollary}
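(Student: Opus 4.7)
The approach is to exploit the exceptional symmetry of the sphere to (i) reduce the Ricci map $\bR$ to a scalar multiple of $\bP$, (ii) rescale the pressure equation so that the resulting bilinear form is symmetric, and (iii) test with a judiciously chosen pair in $\bbV_{1,1}$ to extract the weighted stability estimate.

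First, I would compute the geometry explicitly. Since $\bn = \bx/r$, the Weingarten map reduces to $\bW = \nablaG \bn = (1/r)\bP$, whence $\tr(\bW) = d/r = 2/r$ and $\bW^2 = (1/r^2)\bP$, so $\bR = \tr(\bW)\bW - \bW^2 = (1/r^2)\bP$. Using $\bP\bw = \bw$ and the closed-surface identity $(\bw, \nablaG v) + (\divG \bw, v) = \int_\G \divG(v \bw) = 0$, the offending cross term in the second equation of \eqref{eq:ell} becomes $-(\bR\bw, \nablaG v) = -(1/r^2)(\bw, \nablaG v) = (1/r^2)(\divG \bw, v)$.

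Second, I would multiply that rewritten second equation by $-r^2$ to produce the equivalent system whose joint bilinear form is
\[
    \sB[W, V] := \ba(\bw, \bv) + (\nablaG w, \bv) - r^2 a(w, v) - (\divG \bw, v),
\]
with right-hand side $\langle \bg, \bv\rangle_{-1,1} - r^2 \langle g, v\rangle_{-1,1}$. Symmetry of $\sB$ is direct: integration by parts gives $(\nablaG v, \bw) = -(\divG \bw, v)$ and $(\divG \bv, w) = -(\nablaG w, \bv)$, so the cross terms of $\sB[V,W]$ coincide with those of $\sB[W,V]$; the diagonal terms are already symmetric. This establishes the symmetrizability claim. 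Existence and uniqueness of $W \in \bbV_{1,1}$ are inherited from Theorem \ref{thm:ell} (elliptic reformulation), since scaling an equation by $-r^2 \neq 0$ does not alter the solution set.

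Third, to obtain the stability bound, I would test the symmetrized problem with the admissible function $V = (\bw, -w) \in \bbV_{1,1}$. The diagonal contributions yield $\ba(\bw,\bw) + r^2 a(w,w) = \|\bw\|_1^2 + r^2 \|w\|_1^2$, while the cross terms collapse because $(\nablaG w, \bw) + (\divG \bw, w) = \int_\G \divG(w\bw) = 0$ on the closed surface. The right-hand side becomes $\langle \bg, \bw\rangle_{-1,1} + r^2 \langle g, w\rangle_{-1,1}$, which by Cauchy--Schwarz applied to the two $2$-vectors $(\|\bg\|_{-1}, r\|g\|_{-1})$ and $(\|\bw\|_1, r\|w\|_1)$ is bounded by $(\|\bg\|_{-1}^2 + r^2 \|g\|_{-1}^2)^{1/2}(\|\bw\|_1^2 + r^2 \|w\|_1^2)^{1/2}$. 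Dividing by $(\|\bw\|_1^2 + r^2 \|w\|_1^2)^{1/2}$ and squaring gives the asserted bound.

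The main obstacle is the combinatorial observation that both the scaling factor $-r^2$ for the pressure equation and the test function $V = (\bw, -w)$ are needed simultaneously: the former to symmetrize $\sB$, the latter to exploit the resulting skew-symmetric structure of the off-diagonal terms so they cancel. Once these two choices are identified, the remaining arithmetic is elementary.
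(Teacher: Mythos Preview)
Your proof is correct and follows essentially the same approach as the paper. The only cosmetic differences are that the paper keeps the cross term in the form $(\bw,\nablaG v)$ rather than integrating by parts to $-(\divG\bw,v)$, and accordingly subtracts the rescaled second equation from the first and tests with $V=W$ (so the cross terms cancel as $(\nablaG w,\bw)-(\bw,\nablaG w)=0$), whereas you add the equations and test with $V=(\bw,-w)$; these are equivalent manipulations.
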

        \begin{proof}
            Since $\G$ is 2-dimensional, $\bR = K \bP$, where $K$ is the Gauss curvature \cite[Lemma A.3]{JankuhnOlshanskiiReusken2018}. For the sphere of radius $r > 0$, we have $K = \frac{1}{r^2}$. Hence, \eqref{eq:ell-pb} becomes
            \[
            \begin{cases}
                (\nablaG \bw, \nablaG \bv) + (\nablaG w, \bv) & \kern-0.72em = \langle \bg, \bv \rangle_{-1,1}, \\
                (\bw, \nablaG v) - r^2 (\nablaG w, \nablaG v) & \kern-0.72em = -r^2 \langle g, v \rangle_{-1,1}.
            \end{cases}
            \]
            This system is clearly symmetric and coercive because testing with $V = W$ yields
            \[ \|\nablaG \bw\|^2 + r^2 \|\nablaG w\|^2 = \langle \bg, \bw \rangle_{-1,1} + r^2 \langle g, w \rangle_{-1,1}. \]
            The desired stability estimate then immediately follows.
        \end{proof}

    \subsection{Perturbed surface diffusion operator}
        So far, we have focused our attention on the Bochner-Laplace operator $\Delta_B$ used for the velocity part due to its simpler mathematical structure, namely that its kernel is trivial. In this section, we will show that the elliptic reformulation of the Stokes problem \eqref{eq:ell} with the perturbed \textit{surface diffusion} operator $\alpha \bI - \Delta_S, \alpha > 0$, where $\Delta_S := \bP \divG (\nablaG + \nablaG^T)$ replaces $\Delta_B = \bP \divG \nablaG$, is also well-posed for $d=2$. Let $E_s := \nablaG + \nablaG^T$ be the symmetric covariant derivative operator.

        We note that the perturbed operator of type $\alpha \bI - \Delta_S, \alpha > 0$ is very common in the numerical analysis literature \cite{OlshanskiiYushutin2019, BonitoDemlowLicht2020, OlshanskiiReuskenZhiliakov2021, DemlowNeilan2024}. The most ``physical" version of such an operator would correspond to $\alpha = 0$, cf. \cite{KobaLiuGiga2017, JankuhnOlshanskiiReusken2018}. That being said, in such a case, the solution is unique only up to a Killing vector field (a vector field $\bv \in \bH^1_t$ with $E_s(\bv) = 0$). This poses a major challenge at the discrete level \cite{BonitoDemlowLicht2020}, which is why the zeroth-order term $\alpha \bu, \alpha > 0$ is often introduced. The perturbed operator with $\alpha > 0$ also serves as a prototype for the discrete-in-time transient Stokes problem. We stress that the analysis with $\alpha=0$ is possible provided that function spaces for the velocity are orthogonal to the kernel of $E_s$, cf. \cite{JankuhnOlshanskiiReusken2018}. We, however, do not pursue such a direction in this manuscript.
        
        Let us now define the bilinear form on $\bH^1_t$ induced by $\alpha \bI - \Delta_S$:
        \begin{equation}
            \ba_{s,\alpha}(\bu, \bv) := (E_s(\bu), E_s(\bv)) + \alpha(\bu, \bv).
        \end{equation}
        We consider the following Stokes problem: given $F = (\bf, f) \in \bbV_{-1,0}$, find $U = (\bu, u) \in \bbV_{1,0}$ such that for all $V = (\bv, v) \in \bbV_{1,0}$
        \begin{equation} \label{eq:stokes-2}
            \begin{cases}
                \ba_{s, \alpha}(\bu, \bv) - (u, \divG \bv) & \kern-0.72em = \langle \bf, \bv \rangle_{-1,1}, \\
                (\divG \bu, v) & \kern-0.72em = (f, v).
            \end{cases}
        \end{equation}
        The fact that \eqref{eq:stokes-2} is well-posed follows from Korn's inequality, proved in \cite[Lemma 4.1]{JankuhnOlshanskiiReusken2018} for $d=2$, and Lemma \ref{lem:surj} (surjectivity of the divergence $\divG$). The zeroth order term $\alpha \bu$ rules out the nontrivial kernel of $E_s$. Of course, the stability constant will scale as $1/\alpha$ for $\alpha > 0$ small. Upon invoking Lemma \ref{lem:stokes-iden} (decomposition of the Bochner-Laplacian) together with \eqref{eq:jor18}, we can rewrite \eqref{eq:stokes-2} as
        \begin{equation} \label{eq:stokes-2-new}
            \begin{cases}
                \ba_{s, \alpha}(\bu, \bv) - (u, \divG \bv) & \kern-0.72em = \langle \bg, \bv \rangle_{-1,1}, \\
                (u, - \DeltaG v) - (2 \bR \bu, \nablaG v) & \kern-0.72em = \langle g, v \rangle_{-2,2}.
            \end{cases}
        \end{equation}
        for all $V = (\bv, v) \in \bbV_{1,2}$, where $G = (\bg, g) \in \bbV_{-1,-2}$ is given by
        \begin{equation}\label{eq:sD-2}
            \bg = \bf, \qquad g = - \divG \bf + (\alpha I - 2\DeltaG) f.
        \end{equation}
        We are now essentially in the same situation as in (\ref{eq:stokes-new}, \ref{eq:sD}): $\alpha \bI - \Delta_S$ is coercive on $\bH^1_t$, and the data map $\sD$ induced by \eqref{eq:sD-2} is an isomorphism between $\bbV_{-1,0}$ and $\bbV_{-1,-2}$. The rest of the development of this paper applies to the perturbed surface diffusion operator $\alpha \bI - \Delta_S$ along the same lines. We now turn our attention back to the Stokes formulation with $\Delta_B$ as we dive into discretization.
        
\section{Discretization} \label{sec:discretization}
    We are now ready to discretize \eqref{eq:ell} using lifted parametric FEM \cite{Dziuk1988, DziukElliott2013}. One of the outstanding issues of the classical parametric FEM is geometric inconsistency. This is a non-trivial matter, which, however, appears to be well-understood; see \cite{Dziuk1988, DziukElliott2013, Demlow2009} for the Laplace-Beltrami problem and \cite{HansboLarsonLarsson2020} for the vector Laplace problems. For that reason, 
    in this paper, we assume geometric exactness and instead focus on carrying out the argument by \cite{Schatz1974} that allows proving discrete well-posedness for non-coercive systems. In the numerical experiment section, we demonstrate that under appropriate treatment of the geometry, optimal higher-order error rates are achieved.
    
    In this section, our goal is to derive error estimates in $H^1$ and $L^2$ for an arbitrary hypersurface $\G$, that is, without any assumption on how close $\G$ is to a hypersphere (as in Theorem \ref{thm:ell-coercive}). As was said earlier, we shall employ a trick by A. Schatz \cite{Schatz1974}, which in turn uses the Aubin-Nitsche duality argument \cite{Nitsche1970}, to obtain well-posedness at the discrete level. The analysis of the discrete problem in the coercive regime is straightforward.

    We would like to mention that other FEMs for surface PDEs can be employed, like TraceFEM \cite{OlshanskiiReuskenGrande2009, OlshanskiiReusken2017, BurmanHansboLarsonMassing2018, Reusken2022}. The forthcoming analysis easily extends to the stabilized TraceFEM discretization provided that the geometry is exact. The higher-order treatment of the geometry is realized via isoparametric mappings, see \cite{Lehrenfeld2016, GrandeLehrenfeldReusken2018, LehrenfeldReusken2018}.
    
    \subsection{Geometry}
        Let $\G \subset \bbR^{d+1}, d \geq 2$ be of class $C^3$ be given so that Proposition \ref{prop:stokes} (Stokes) is valid. Let $\widetilde \cT_h$ be a quasi-uniform and shape-regular partition of $\G$ into either flat or curved elements of size $h$. We call the discrete surface $\G_h$. Subordinate to $\widetilde \cT_h$, we define two $H^1$-conforming finite element spaces, $\widetilde \bcV_h \subset H^1(\G_h; \bbR^{d+1}), \widetilde \cV_h \subset H^1(\G_h)$. 
        
        We assume that $h > 0$ is small enough to resolve all principal curvatures of $\G$; this ensures that the closest point projection $\bp : \G_h \to \G$ given by $\bp(\bx) := \bx - d(\bx) \nabla d(\bx) $ is bijective, where $d$ is the distance function to $\G$ \cite[\S2.4]{BonitoDemlowNochetto2020}. We introduce the lifting operator $(\cdot)^l : L^2(\G_h) \to L^2(\G)$ defined as follows:
        \begin{equation} \label{eq:lift}
            v^l(\bx) := v(\bp^{-1}(\bx)), \qquad \forall v \in L^2(\G_h), \forall \bx \in \G.
        \end{equation}
        For vector-valued functions, we lift each component. The inverse lifting operator $(\cdot)^{-l} : L^2(\G) \to L^2(\G_h)$ is also well-defined, and we have the following classical equivalence results \cite{Dziuk1988, Demlow2009, BonitoDemlowNochetto2020} for all $v \in L^2(\G_h)$ and $\phi \in H^1(\G_h)$
        \begin{equation} \label{eq:lift-equiv}
            \|v^l\|_{L^2(\G)} \simeq \|v\|_{L^2(\G_h)}, \qquad \|\nabla_\G \phi^l\|_{L^2(\G)} \simeq \|\nabla_{\G_h} \phi\|_{L^2(\G_h)}.
        \end{equation}
        Let $\cT_h$ be the lift of the partition $\widetilde \cT_h$ from $\G_h$ to $\G$. Thus, the union of all $T \in \cT_h$ yields $\G$. Then, we define $\bcV_h$ to be the lifted FE space associated with $\widetilde \bcV_h$, and $\cV_h$ to be the lifted subspace of $\widetilde \cV_h$ satisfying $\cV_h \subset L^2_\#$. Finally, let $\bbV_h := \bcV_h \times \cV_h$.
    
    \subsection{Towards a penalized formulation}
        Discretizing the space $\bH^1_t$ is challenging due to the tangentiality constraint. For $C^0$-conforming finite element spaces defined on $C^0$-conforming triangulations of $\G$, attempts to impose this constraint strongly lead to locking. One popular option is to impose this constraint weakly by penalizing the normal component \cite{OlshanskiiQuainiReuskenYushutin2018, HansboLarsonLarsson2020}. This option gives rise to a rather simple analysis, which follows from favorable properties of the penalized system at the continuous level \cite{JankuhnOlshanskiiReusken2018}. The only disadvantage of this approach is that the term penalizing the normal component needs to have a higher-order approximation of the unit normal vector $\bn := \nabla d$ to $\G$ \cite{OlshanskiiQuainiReuskenYushutin2018, HansboLarsonLarsson2020, JankuhnOlshanskiiReuskenZhiliakov2021}, which may not always be available. Another possibility is to relax the $\bH^1$-conformity to $\bH(\divG)$, as was done in \cite{BonitoDemlowLicht2020, DemlowNeilan2024, DemlowNeilan2025-2}. This allows imposing the tangentiality constraint elementwise (thus circumventing the need for a higher-order approximation of the normal) at the expense of being non-$\bH^1$-conforming. We stress that the proposed elliptic reformulation \eqref{eq:ell} can be discretized either way. However, for the sake of simplifying the presentation, we opt to proceed via the penalty approach.
        
        Let $\bH^1_* := \overline{\bH^1}^{(\|\bP \cdot\|_1^2 + \|\bn \cdot\|^2)^{1/2}}$. For all $\bw, \bv \in \bH^1_*$, we define
        \begin{equation}
        \begin{split}
            \bk_\tau(\bw, \bv) & := \frac{1}{\tau^2} (\bw \cdot \bn, \bv \cdot \bn), \qquad \tau \leq 1.  \\
            \ba_\tau(\bw, \bv) & := \ba(\bP \bw, \bP \bv) + \bk_\tau(\bw, \bv).
        \end{split}
        \end{equation}
        We next show that $\ba_\tau$ induces a norm on $\bH^1_*$.
        \begin{lemma}[penalized Poincar\'e] \label{lem:pen-poincare}
            There exists $C_P'>0$ such that for all $\bv \in \bH^1_*$, $\|\bv\| \leq C_P' (\|\bP \bv\|_1^2 + \|\bn \cdot \bv\|)^{1/2}$.
        \end{lemma}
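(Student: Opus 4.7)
The approach is to combine the pointwise orthogonal decomposition of $\bv$ into its tangential and normal parts with the Poincar\'e inequality \eqref{eq:poincare} applied to the tangential piece. Since $\bn$ is a unit vector orthogonal to the range of $\bP = \bI - \bn \otimes \bn$, every $\bv \in \bH^1_*$ admits the pointwise a.e.~decomposition $\bv = \bP \bv + (\bn \cdot \bv)\,\bn$ with orthogonal summands, so that $|\bv|^2 = |\bP \bv|^2 + (\bn \cdot \bv)^2$ and, after integration over $\G$,
\[
\|\bv\|^2 = \|\bP \bv\|^2 + \|\bn \cdot \bv\|^2.
\]

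Next, I would observe that $\bP \bv$ is a.e.~tangential by construction, so $\bP \bv \in \bH^1_t$, and invoke the Poincar\'e inequality for tangential vector fields from \eqref{eq:poincare} to obtain $\|\bP \bv\| \leq C_{P,t}\,\ba(\bP \bv, \bP \bv)^{1/2} = C_{P,t}\,\|\bP \bv\|_1$, since the $\bH^1_t$ norm is the Dirichlet one by convention. Setting $C_P' := \max\{C_{P,t}, 1\}$, the two bounds combine to
\[
\|\bv\|^2 \leq C_{P,t}^2 \|\bP \bv\|_1^2 + \|\bn \cdot \bv\|^2 \leq (C_P')^2 \bigl(\|\bP \bv\|_1^2 + \|\bn \cdot \bv\|^2\bigr),
\]
which is the desired estimate (with $\|\bn\cdot\bv\|^2$ in the right-hand side of the statement).

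There is no serious obstacle; the only care needed is to justify the manipulations when $\bv$ lies in the closure $\bH^1_*$ rather than in $\bH^1$ itself. This is handled by a routine density argument: for a Cauchy sequence $\bv_n \in \bH^1$ in the $\bH^1_*$ norm, the norm directly controls $\|\bP \bv_n\|_1$ and $\|\bn \cdot \bv_n\|$, so $\bP \bv_n$ is Cauchy in $\bH^1_t$ with some limit $\bphi \in \bH^1_t$ and $\bn \cdot \bv_n$ is Cauchy in $L^2$ with some limit $\psi$. Then $\bv_n \to \bphi + \psi\,\bn$ in $\bL^2$, so every $\bv \in \bH^1_*$ has a well-defined tangential part $\bP \bv = \bphi \in \bH^1_t$ and normal part $\bn \cdot \bv = \psi \in L^2$, and both the orthogonal decomposition identity and the above estimate pass to the limit by continuity of the respective norms.
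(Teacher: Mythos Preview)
Your proof is correct and follows essentially the same approach as the paper: orthogonal decomposition $\|\bv\|^2 = \|\bP\bv\|^2 + \|\bn\cdot\bv\|^2$, then Poincar\'e for tangential fields on $\bP\bv$, with the same constant $C_P' = \max\{1,C_{P,t}\}$. Your additional density argument for passing from $\bH^1$ to the closure $\bH^1_*$ is more careful than the paper (which omits it), and your remark that the statement should read $\|\bn\cdot\bv\|^2$ rather than $\|\bn\cdot\bv\|$ is also correct.
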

        \begin{proof}
            From Poincar\'{e}'s inequality for tangential vector fields \eqref{eq:poincare}, we deduce that
            \[
            \|\bv\|^2 = \|\bn \cdot \bv\|^2 + \|\bP \bv\|^2 \leq \max\{1,C_{P,t}^2\}(\|\bn \cdot \bv\|^2 + \|\bP \bv\|_1^2).
            \]
            This concludes the proof.
        \end{proof}
    
    \subsection{Bilinear forms}
        Besides the bilinear forms $a : \cV_h \times \cV_h \to \bbR$ and $\ba_h : \bcV_h \times \bcV_h \to \bbR$ for the second order operators, we next introduce the discrete bilinear forms corresponding to their compact perturbations
        \begin{equation} \label{eq:prop-of-b-c}
            \begin{split}
                b(w_h, \bv_h) & := (\nablaG w_h, \bv_h)_\G = b(w_h, \bP \bv_h), \\
                c(\bw_h, v_h) & := (\bR \bw_h, \nablaG v_h)_\G = c(\bP \bw_h, v_h),
            \end{split}
        \end{equation}
        because $\bR \bP = (\tr(\bW)\bW - \bW^2) \bP = \bR$. Let $\sA_h : \bbV_h \times \bbV_h \to \bbR$ be the discrete counterpart of the bilinear form $\sA$ from \eqref{eq:sA} and be defined as 
        \begin{equation} \label{eq:sAh}
            \begin{split}
                \sA_h[W_h, V_h] & := \ba_h(\bw_h, \bv_h) + b(w_h, \bv_h) + a(w_h, v_h) - c(\bw_h, v_h).
            \end{split}
        \end{equation}
        Introducing the following notation:
        \begin{equation} \label{eq:Vht}
            V_{h,t} := (\bP \bv_h, v_h), \qquad \forall V_h \in \bbV_h,
        \end{equation}
        we can conveniently write \eqref{eq:sAh} as
        \begin{equation} \label{eq:sAh-2}
            \begin{split}
                \sA_h[W_h, V_h] & = \sA[W_{h,t}, V_{h,t}] + \bk_h(\bw_h, \bv_h).
            \end{split}
        \end{equation}
        
    \subsection{Norms}
        We endow $\cV_h$ with the $a(\cdot, \cdot)^{1/2}$-norm, and $\bcV_h$ with:
        \begin{equation} \label{eq:1h-norm}
            \|\bv_h\|_{1h} := \ba_h(\bv_h, \bv_h)^{1/2}.
        \end{equation}
        Note that \eqref{eq:1h-norm} is indeed a norm by Lemma \ref{lem:pen-poincare} (penalized Poincar\'e). We recall that $\bbV_h = \bcV_h \times \cV_h$, and endow it with the following graph norm: 
        \begin{equation}
            \begin{split}
                \|V_h\|_{1h,1}^2 & := \|\bv_h\|_{1h}^2 + \|v_h\|_1^2 = \|\bP \bv_h\|_1^2 + \bk_h(\bv_h, \bv_h) + \|v_h\|_1^2 \\
                & \ = \|V_{h,t}\|_{1,1}^2 + \bk_h(\bv_h, \bv_h), \qquad \forall V_h = (\bv_h, v_h) \in \bbV_h.
            \end{split} 
        \end{equation}
        
    \subsection{The finite element method}
        Given $G = (\bg, g) \in \bbV_{-1,-1}$, find $W_h := (\bw_h, w_h) \in \bbV_h$ such that
        \begin{equation} \label{eq:discrete}
            \begin{cases}
                \ba_h(\bw_h, \bv_h) + b(w_h, \bv_h) & \kern-0.72em = \langle \bg, \bP \bv_h \rangle_{-1,1}, \\
                a(w_h, v_h) - c(\bw_h, v_h) & \kern-0.72em = \langle g, v_h \rangle_{-1,1},
            \end{cases}
        \end{equation}
        for all $V_h := (\bv_h, v_h) \in \bbV_h$. This problem reads equivalently as follows in terms of $\sA_h$: given $G \in \bbV_{-1,-1}$, find $W_h \in \bbV_h$ such that for all $V_h \in \bbV_h$
        \begin{equation} \label{eq:discrete-compact}
            \sA_h[W_h, V_h] = G[V_{h,t}].
        \end{equation}
        We note that the right-hand side in \eqref{eq:discrete-compact} makes sense because $V_{h,t} = (\bP \bv_h, v_h) \in \bH^1_t \times H^1_\# = \bbV_{1,1}$.
        \begin{lemma}[properties of $\sA_h$] \label{lem:prop-of-sAh}
            The bilinear form $\sA_h$ defined in \eqref{eq:sAh} is continuous and satisfies a G\r{a}rding's inequality for all $W_h, V_h \in \bbV_h$:
            \[
            \begin{split}
                c_{\sA, 1} \|W_h\|_{1h,1}^2 - c_{\sA, 2} \|W_{h,t}\|_{0,0}^2 & \leq \sA_h[W_h, W_h] \\
                |\sA_h[W_h, V_h]| & \leq C_{\sA} \|W_h\|_{1h,1} \|V_h\|_{1h,1}.
            \end{split}
            \]
            where $c_{\sA, 1}, c_{\sA, 2}, C_{\sA} > 0$ come from Proposition \ref{prop:prop-of-sA} (properties of $\sA$).
        \end{lemma}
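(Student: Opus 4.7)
The plan is to exploit the algebraic identity \eqref{eq:sAh-2}, which writes
\[
\sA_h[W_h, V_h] = \sA[W_{h,t}, V_{h,t}] + \bk_h(\bw_h, \bv_h),
\]
thereby reducing both G\r{a}rding's inequality and continuity for $\sA_h$ to the corresponding properties of $\sA$ established in Proposition \ref{prop:prop-of-sA}, together with a straightforward treatment of the symmetric positive semidefinite penalty $\bk_h$. Note that $W_{h,t} = (\bP \bw_h, w_h) \in \bbV_{1,1}$ is an admissible test pair for $\sA$ even though $W_h \in \bbV_h$ need not have a tangential velocity component.

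For the G\r{a}rding's inequality, I would test with $V_h = W_h$ and apply Proposition \ref{prop:prop-of-sA} to the tangential pair $W_{h,t}$, obtaining $\sA[W_{h,t}, W_{h,t}] \geq c_{\sA,1} \|W_{h,t}\|_{1,1}^2 - c_{\sA,2} \|W_{h,t}\|_{0,0}^2$. Since $c_{\sA,1} = 1/2 \leq 1$ and $\bk_h(\bw_h, \bw_h) \geq 0$, I may shrink the penalty contribution to $c_{\sA,1}\,\bk_h(\bw_h, \bw_h)$ without losing validity. Combining with \eqref{eq:sAh-2} then yields
\[
\sA_h[W_h, W_h] \geq c_{\sA,1}\bigl(\|W_{h,t}\|_{1,1}^2 + \bk_h(\bw_h, \bw_h)\bigr) - c_{\sA,2} \|W_{h,t}\|_{0,0}^2 = c_{\sA,1} \|W_h\|_{1h,1}^2 - c_{\sA,2} \|W_{h,t}\|_{0,0}^2,
\]
where the final equality is the definition of the graph norm $\|\cdot\|_{1h,1}$.

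For continuity, I would combine the bound $|\sA[W_{h,t}, V_{h,t}]| \leq C_\sA \|W_{h,t}\|_{1,1} \|V_{h,t}\|_{1,1}$ from Proposition \ref{prop:prop-of-sA} with the Cauchy--Schwarz inequality $|\bk_h(\bw_h, \bv_h)| \leq \bk_h(\bw_h, \bw_h)^{1/2} \bk_h(\bv_h, \bv_h)^{1/2}$ for the penalty form. Since $C_\sA \geq 1$ by its definition, I may factor $C_\sA$ out of the sum, and an application of the discrete Cauchy--Schwarz inequality in $\bbR^2$ to the two pairs $(\|W_{h,t}\|_{1,1}, \bk_h(\bw_h, \bw_h)^{1/2})$ and $(\|V_{h,t}\|_{1,1}, \bk_h(\bv_h, \bv_h)^{1/2})$ bounds the sum of products by $\|W_h\|_{1h,1} \|V_h\|_{1h,1}$, again by the definition of the $\|\cdot\|_{1h,1}$-norm.

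There is no genuine obstacle here: the result is essentially an algebraic transfer of Proposition \ref{prop:prop-of-sA} through \eqref{eq:sAh-2}. The only point requiring attention is the validity of \eqref{eq:sAh-2} itself, which in turn rests on the splitting $\ba_h(\bw_h, \bv_h) = \ba(\bP \bw_h, \bP \bv_h) + \bk_h(\bw_h, \bv_h)$ and on the invariances $b(w_h, \bv_h) = b(w_h, \bP \bv_h)$ and $c(\bw_h, v_h) = c(\bP \bw_h, v_h)$ recorded in \eqref{eq:prop-of-b-c}. In particular, no inverse estimate, no quasi-uniformity, and no assumption on $h$ are needed at this stage; the G\r{a}rding structure is inherited from the continuous level by purely algebraic means.
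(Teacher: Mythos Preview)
Your proposal is correct and follows essentially the same approach as the paper: the paper's one-line proof says that, using the invariances of $b$ and $c$ from \eqref{eq:prop-of-b-c}, the argument is virtually the same as that of Proposition~\ref{prop:prop-of-sA}, and your use of the identity \eqref{eq:sAh-2} is precisely the clean way to execute that transfer. Your explicit handling of the penalty term (via $c_{\sA,1}\le 1$, $C_\sA\ge 1$, and Cauchy--Schwarz in $\bbR^2$) is a correct unpacking of what the paper leaves implicit.
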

        \begin{proof}
            Using the properties of $b, c$ from \eqref{eq:prop-of-b-c}, the proof becomes virtually the same as that of Proposition \ref{prop:prop-of-sA}.
        \end{proof}
        \begin{remark}[relation to Stokes data regularity]
            The proposed method \eqref{eq:discrete} makes sense for all data $G = (\bg, g) \in \bbV_{-1,-1}$. From Corollary \ref{cor:reg-res-stokes} (higher regularity for Stokes), this is ensured when the data for the Stokes problem \eqref{eq:stokes} $F = (\bf, f)$ lies in $\bH^{-1}_t(\divG) \times H^1_\#$, where $\bH^{-1}_t(\divG)$ is defined in \eqref{eq:hdiv}. This is a stronger assumption than the minimal regularity data setting $F = (\bf, f) \in \bH^{-1}_t \times L^2_\#$ of the Stokes problem \eqref{eq:stokes}. This is due to the fact that the solution pair $(\bw_h, w_h)$ is to be controlled in $\bH^1_t \times H^1_\#$-norm as opposed to the more classical and weaker space $\bH^1_t \times L^2_\#$.
        \end{remark}

    \subsection{Interpolation} \label{sec:interp}
        We next show that the lifted spaces $\cV_h$ and $\bcV_h$ have optimal approximation properties. Let $V \in \bbV_{1,1}$ and $V_h \in \bbV_h$. We define the following \textit{error functional} for the $\bbV_{1,1}$-norm:
        \begin{equation} \label{eq:err-fun}
            \begin{split}
                \bE_{1h,1}[V, V_h] & := ( \|V - V_{h,t}\|_{1,1}^2 + \bk_h(\bv_h, \bv_h) )^{1/2} \\
                & \ = ( \|\nablaG (\bv - \bP \bv_h)\|^2 + \bk_h(\bv_h, \bv_h) + \|\nablaG (v - v_h)\| )^{1/2}.
            \end{split}
        \end{equation}
        In the sequel, we shall need basic interpolation error estimates for $v \in H^2(\G)$. For $d=2,3$, due to the embedding $H^2(\G) \embeds C^0(\G)$, one can use the Lagrange interpolant, cf. \cite{Dziuk1988, DziukElliott2013-2, Demlow2009}. For $d \geq 4$, we shall need an appropriately defined quasi-interpolant.  The construction of the Cl\'{e}ment-type \cite{Clement1975} operator was presented in \cite[\S 2.4]{DemlowDziuk2007} (for polyhedral surfaces); that of the Scott-Zhang \cite{ScottZhang1990} can be found in \cite[\S 4.4]{BonitoDemlowNochetto2020} (for polyhedral hypersurfaces) and in \cite[\S 3.1]{CamachoDemlow2015} (for higher-order geometric approximation). For the purposes of generality, we shall use the latter construction.
        
        Let $\widetilde{I}_h : H^1(\G_h) \to \widetilde{\cV}_h$ be the Scott-Zhang operator from \cite[\S 3.1]{CamachoDemlow2015}, and $\widetilde{\bI}_h : \bH^1(\G_h) \to \widetilde{\bcV}_h$ be its vectorial counterpart. We then define $I_h : H^1(\G) \to \cV_h$ and $\bI_h : \bH^1(\G) \to \bcV_h$, respectively, as
        \begin{equation} \label{eq:scott-zhang}
            I_h v := (\widetilde{I}_h v^{-l})^{l}, \qquad \bI_h \bv := (\widetilde{\bI}_h \bv^{-l})^{l},
        \end{equation}
        for all $(\bv, v) \in \bH^1(\G) \times H^1(\G)$. The lifting operators $(\cdot)^l, (\cdot)^{-l}$ are defined in \eqref{eq:lift}. The following lifted interpolation error estimate is an immediate consequence of the approximation result \cite[Theorem 3.2]{CamachoDemlow2015}, $H^2$-norm stability of the lifting operator \cite[Lemma 2.1]{CamachoDemlow2015}, and the equivalence results from \eqref{eq:lift-equiv}. We therefore omit the proof.
        \begin{proposition}[basic interpolation] \label{prop:basic-interp}
            Let $\G$ be of class $C^2$ and $v \in H^2(\G)$, then there exists constant $C_6 > 0$ depending only on $\G$ and shape-regularity such that
            \[
            \|v - I_h v\| + h \|\nablaG (v - I_h v)\| \leq C_6 h^2 \|v\|_2.
            \]
        \end{proposition}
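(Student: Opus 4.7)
The plan is to transfer the standard interpolation estimate on the discrete surface $\G_h$ to the continuous surface $\G$ via the lifting operator $(\cdot)^l$ and its inverse. This is essentially a three-step ``sandwich'' argument: pull $v$ back to $\G_h$, interpolate there using the Scott-Zhang operator $\widetilde I_h$, then lift the error back to $\G$.

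First, I would record the ingredients. By the $H^2$-stability of the lifting operator \cite[Lemma 2.1]{CamachoDemlow2015}, the inverse lift satisfies $v^{-l} \in H^2(\G_h)$ with $\|v^{-l}\|_{H^2(\G_h)} \lesssim \|v\|_2$, where the hidden constant depends only on $\G$ (this is where $C^2$-regularity of $\G$ enters, to ensure the closest-point projection $\bp$ is a $C^1$-diffeomorphism between $\G$ and $\G_h$). By the Scott-Zhang approximation result \cite[Theorem 3.2]{CamachoDemlow2015}, applied on the shape-regular partition $\widetilde \cT_h$ of $\G_h$,
\[
    \|v^{-l} - \widetilde I_h v^{-l}\|_{L^2(\G_h)} + h \|\nabla_{\G_h}(v^{-l} - \widetilde I_h v^{-l})\|_{L^2(\G_h)} \lesssim h^2 \|v^{-l}\|_{H^2(\G_h)},
\]
with the constant depending only on shape-regularity.

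Next, I would lift the estimate back. Since $I_h v = (\widetilde I_h v^{-l})^l$ by definition \eqref{eq:scott-zhang}, and since lifting commutes with taking differences, $v - I_h v = (v^{-l} - \widetilde I_h v^{-l})^l$. Applying the norm equivalences \eqref{eq:lift-equiv} to both the $L^2$- and tangential-gradient terms yields
\[
    \|v - I_h v\| + h \|\nablaG (v - I_h v)\| \lesssim \|v^{-l} - \widetilde I_h v^{-l}\|_{L^2(\G_h)} + h \|\nabla_{\G_h}(v^{-l} - \widetilde I_h v^{-l})\|_{L^2(\G_h)}.
\]
Chaining this with the Scott-Zhang bound and the $H^2$-stability of the inverse lift produces the asserted estimate with $C_6$ depending only on $\G$ and shape-regularity.

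There is no real obstacle here: the work has been outsourced to \cite{CamachoDemlow2015}, and the whole argument is bookkeeping between $\G$ and $\G_h$. The only mild subtlety worth flagging is that \eqref{eq:lift-equiv} is stated only for $L^2$ and tangential gradients; that is exactly what is needed, so no $H^2$-lifting equivalence is invoked on the error itself. The $H^2$-lifting bound is used solely on $v$ (not on the error) in order to control $\|v^{-l}\|_{H^2(\G_h)}$ by $\|v\|_2$, which is precisely the content of \cite[Lemma 2.1]{CamachoDemlow2015}. The authors have already indicated that they omit the proof for these reasons, so the proposal simply makes the three substitutions explicit.
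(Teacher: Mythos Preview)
Your proposal is correct and follows exactly the route the paper indicates: the authors explicitly state that the estimate is an immediate consequence of the approximation result \cite[Theorem 3.2]{CamachoDemlow2015}, the $H^2$-stability of the lift \cite[Lemma 2.1]{CamachoDemlow2015}, and the norm equivalences \eqref{eq:lift-equiv}, and they omit the proof. You have simply made these three substitutions explicit, with no deviation from the intended argument.
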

        We next show that $\bI_h, I_h$ have optimal approximability with respect to the error functional $\bE_{1h,1}$.
        \begin{lemma}[interpolation error] \label{lem:interp}
            Let $\G$ be of class $C^3$, $V \in \bbV_{2,2}$ and $V_h := (\bI_h \bv, I_h v) \in \bbV_h$. Then there exists a constant $C_7 > 0$ depending only on $\G$ and shape-regularity such that
            \[\bE_{1h,1}[V, V_h] \leq C_7 h \|V\|_2.\]
        \end{lemma}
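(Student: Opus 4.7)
The plan is to bound each of the three terms appearing in
\[
\bE_{1h,1}[V,V_h]^2 = \|\nablaG(\bv-\bP\bI_h\bv)\|^2 + \bk_h(\bI_h\bv,\bI_h\bv) + \|\nablaG(v-I_hv)\|^2
\]
separately by $C h^2 \|V\|_2^2$. The pressure piece $\|\nablaG(v - I_h v)\| \leq C_6 h \|v\|_2$ is immediate from Proposition \ref{prop:basic-interp} (basic interpolation). The two velocity pieces are handled by splitting the interpolation error into its tangential and normal parts via the identity $\bI = \bP + \bn\otimes\bn$, and exploiting the tangentiality $\bv\cdot\bn=0$.

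\textbf{Penalty term.} Since $\bv\cdot\bn = 0$ a.e. on $\G$, we have $\bI_h\bv\cdot\bn = (\bI_h\bv-\bv)\cdot\bn$, so by Proposition \ref{prop:basic-interp} applied componentwise,
\[
\bk_h(\bI_h\bv,\bI_h\bv) = \tfrac{1}{h^2}\|\bI_h\bv\cdot\bn\|^2 \leq \tfrac{1}{h^2}\|\bI_h\bv - \bv\|^2 \leq C_6^2 h^2 \|\bv\|_2^2.
\]

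\textbf{Tangential $H^1$ term.} Write
\[
\bv - \bP\bI_h\bv = (\bv - \bI_h\bv) + (\bn\cdot\bI_h\bv)\,\bn,
\]
using $\bI - \bP = \bn\otimes\bn$. Proposition \ref{prop:basic-interp} gives $\|\nablaG(\bv-\bI_h\bv)\| \leq C_6 h\|\bv\|_2$. For the second term, the product rule yields
\[
\nablaG\big((\bn\cdot\bI_h\bv)\,\bn\big) = \bn\otimes\nablaG(\bn\cdot\bI_h\bv) + (\bn\cdot\bI_h\bv)\,\bW,
\]
and since $\bn\cdot\bI_h\bv = \bn\cdot(\bI_h\bv-\bv)$, the product rule once more gives
\[
|\nablaG(\bn\cdot(\bI_h\bv-\bv))| \leq |\bW||\bI_h\bv-\bv| + |\nablaG(\bI_h\bv-\bv)|\quad\text{a.e. on }\G.
\]
Combining with $\|\bW\|_{0,\infty} < \infty$ (since $\G$ is $C^3$) and Proposition \ref{prop:basic-interp}, we obtain $\|\nablaG((\bn\cdot\bI_h\bv)\bn)\| \leq C h \|\bv\|_2$, the factor of $h$ rather than $h^2$ coming from $\|\nablaG(\bI_h\bv-\bv)\|$.

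\textbf{Assembly.} Summing the three contributions yields $\bE_{1h,1}[V,V_h]^2 \leq C_7^2 h^2 (\|\bv\|_2^2 + \|v\|_2^2) = C_7^2 h^2 \|V\|_2^2$. The only subtlety is the penalty term, where one must use the tangentiality of $\bv$ to convert the normal component of $\bI_h\bv$ into an interpolation error in $L^2$; without this, the $h^{-1}$ factor hidden in $\bk_h^{1/2}$ would not be absorbed. All remaining estimates are routine applications of Proposition \ref{prop:basic-interp} together with uniform $L^\infty$-bounds on $\bn$ and $\bW$ that are afforded by the $C^3$-regularity of $\G$.
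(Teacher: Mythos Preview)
Your proof is correct and follows essentially the same approach as the paper: the scalar estimate is immediate from Proposition \ref{prop:basic-interp}, the penalty term is handled by writing $\bn\cdot\bI_h\bv = \bn\cdot(\bI_h\bv-\bv)$ and invoking the $L^2$ interpolation bound, and the tangential $H^1$ term is controlled by the product rule together with the $H^1$ interpolation bound. The only cosmetic difference is that the paper writes $\bv-\bP\bI_h\bv = \bP(\bv-\bI_h\bv)$ and applies the product rule once to $\bP$, whereas you split $\bv-\bP\bI_h\bv = (\bv-\bI_h\bv) + (\bn\cdot\bI_h\bv)\,\bn$ and apply the product rule to the normal piece; these are algebraically equivalent and yield the same estimate.
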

        \begin{proof}
            The error estimate for $v \in H^2_\#$ is trivial in light of Proposition \ref{prop:basic-interp} (basic interpolation). Next, let $\bv = (v_i)_{i=1}^{d+1} \in \bH^2_t \subset \bH^2$. From Proposition \ref{prop:basic-interp}, we infer that
            \begin{equation} \label{lem:interp:eq:err}
                \|\bv - \bI_h \bv\| + h\|\nabla_M(\bv - \bI_h \bv)\| \leq Ch^2 \|\bv\|_2,
            \end{equation}
            where $\nabla_M \bv = (\nabla_M^T v_i)_{i=1}^{d+1} = (\nablaG^T v_i)_{i=1}^{d+1}$. Using tangentiality of $\bv$, we next observe that
            \[\|\bv - \bI_h \bv\|^2 = \|\bP(\bv - \bI_h \bv)\|^2 + \|\bn \cdot (\bv - \bI_h \bv)\|^2 = \|\bv - \bP \bI_h \bv\|^2 + \|\bn \cdot \bI_h \bv\|^2.\]
            From the $L^2$-error estimate \eqref{lem:interp:eq:err} and the identity above, we have
            \[
            \|\bn \cdot \bI_h \bv\|^2 \leq \|\bv - \bI_h \bv\|^2 \leq Ch^4 \|\bv\|_2^2,
            \]
            whence
            \[
            \bk_h(\bI_h \bv, \bI_h \bv)^{1/2} \leq C h \|\bv\|_2.
            \]
            On the other hand, from the $H^1$-error estimate \eqref{lem:interp:eq:err}, we have
            \[
            \|\nablaG(\bv - \bI_h \bv)\| \leq \|\nabla_M(\bv - \bI_h \bv)\| \leq Ch \|\bv\|_2,
            \]
            because $\nablaG = \bP \nabla_M$. Next, we observe that
            \[
            \begin{split}
                \|\nablaG(\bv - \bP \bI_h \bv)\| & = \|\nablaG(\bP(\bv - \bI_h \bv))\| \\
                & \leq \|\nablaG \bP\|_{0,\infty} \|\bv - \bI_h \bv\| + \|\bP\|_{0,\infty}\|\nablaG(\bv - \bI_h \bv)\| \leq C h \|\bv\|_2.
            \end{split}
            \]
            Combining these estimates concludes the proof.
        \end{proof}
        
    \subsection{Quasi-orthogonality}      
        In view of definitions \eqref{eq:sA} of $\sA$ and \eqref{eq:sAh} of $\sA_h$, the continuous and discrete problems \eqref{eq:ell-pb}, \eqref{eq:discrete-compact} satisfy the following crucial Galerkin quasi-orthogonality property.
        \begin{lemma}[quasi-orthogonality] \label{lem:quasi-orth}
            If $W = (\bw, w) \in \bbV_{1,1}$ solves \eqref{eq:ell-pb} and $W_h = (\bw_h, w_h) \in \bbV_h$ solves \eqref{eq:discrete-compact}, then the following holds
            \[
            \begin{split}
                \sA[W - W_{h,t}, V_{h,t}] = \bk_h(\bw_h, \bv_h), \qquad \forall V_h = (\bv_h, v_h) \in \bbV_h.
            \end{split}
            \]
        \end{lemma}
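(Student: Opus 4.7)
The plan is to derive the identity by testing the continuous equation \eqref{eq:ell-pb} against the tangential part of a discrete test function and then subtracting the discrete equation \eqref{eq:discrete-compact}; the key algebraic link is the decomposition \eqref{eq:sAh-2} of $\sA_h$ into $\sA$ evaluated on the tangential pair $V_{h,t}$ plus the normal-penalty contribution $\bk_h(\bw_h, \bv_h)$.

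Concretely, I would first fix an arbitrary $V_h = (\bv_h, v_h) \in \bbV_h$ and note that, by the very definition of $V_{h,t} := (\bP \bv_h, v_h)$ in \eqref{eq:Vht}, we have $V_{h,t} \in \bH^1_t \times H^1_\# = \bbV_{1,1}$, so it is admissible as a test function in \eqref{eq:ell-pb}. Doing so gives
\[
\sA[W, V_{h,t}] = G[V_{h,t}].
\]
On the discrete side, by \eqref{eq:discrete-compact} applied to $V_h$,
\[
\sA_h[W_h, V_h] = G[V_{h,t}],
\]
where crucially the right-hand side of the discrete problem is the very same $G[V_{h,t}]$ (this is consistency: the discrete problem is posed with the continuous functional evaluated on tangential components). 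Subtracting these two identities eliminates the data $G$:
\[
\sA[W, V_{h,t}] - \sA_h[W_h, V_h] = 0.
\]

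Next I would invoke the identity \eqref{eq:sAh-2}, which asserts $\sA_h[W_h, V_h] = \sA[W_{h,t}, V_{h,t}] + \bk_h(\bw_h, \bv_h)$, to rewrite the discrete bilinear form as an evaluation of the continuous one plus the penalty piece; here one uses the properties \eqref{eq:prop-of-b-c} of $b$ and $c$, namely $b(w_h, \bv_h) = b(w_h, \bP \bv_h)$ and $c(\bw_h, v_h) = c(\bP \bw_h, v_h)$, which are what make $\sA_h[W_h, V_h]$ reduce to $\sA[W_{h,t}, V_{h,t}]$ modulo the normal-penalty term. Substituting and using bilinearity of $\sA$ yields
\[
\sA[W - W_{h,t}, V_{h,t}] = \bk_h(\bw_h, \bv_h),
\]
which is the assertion.

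There is no genuine obstacle: the whole argument is a one-line manipulation once \eqref{eq:sAh-2} is in hand. The only point requiring a sentence of care is checking that $V_{h,t}$ is indeed admissible in the continuous problem (tangential, in $\bH^1 \times H^1_\#$) so that testing \eqref{eq:ell-pb} with it is legitimate; this is immediate from $\bP \bv_h \cdot \bn = 0$ and $v_h \in \cV_h \subset L^2_\#$.
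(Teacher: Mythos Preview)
Your proposal is correct and follows essentially the same approach as the paper's own proof: test \eqref{eq:ell-pb} with $V_{h,t}\in\bbV_{1,1}$, subtract the discrete equation \eqref{eq:discrete-compact}, and invoke the decomposition \eqref{eq:sAh-2} to conclude. The only minor addition in your write-up is the explicit mention of \eqref{eq:prop-of-b-c} as the reason \eqref{eq:sAh-2} holds, which the paper takes for granted at this point.
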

        \begin{proof}
            Let $V_h = (\bv_h, v_h) \in \bbV_h$. We first observe that $V_{h,t} := (\bP \bv_h, v_h) \in \bH^1_t \times H^1_\# = \bbV_{1,1}$, hence $V_{h,t}$ is a valid test function for the continuous problem \eqref{eq:ell-pb}:
            \[
            \sA[W, V_{h,t}] = G[V_{h,t}].
            \]
            Since the discrete problem \eqref{eq:discrete-compact} reads $\sA_h[W_h, V_h] = G[V_{h,t}]$ for all $V_h \in \bbV_h$, we have
            \[\begin{split}
                0 & = \sA[W, V_{h,t}] - \sA_h[W_h, V_h] \\
                & = \sA[W, V_{h,t}] - \sA[W_{h,t}, V_{h,t}] - \bk_h(\bw_h, \bv_h) \\
                & = \sA[W - W_{h,t}, V_{h,t}] - \bk_h(\bw_h, \bv_h),
            \end{split}\]
            where on the second line we used \eqref{eq:sAh-2}. This proves the assertion.
        \end{proof}
        
    \subsection{Duality argument} \label{sec:duality-arg}
        In this section, we derive a conditional $L^2$-error estimate between $W$ and $W_h$ using the Aubin-Nitsche trick \cite{Nitsche1970}. Since we have not yet proved well-posedness of the discrete problem \eqref{eq:discrete-compact}, for the following result, we assume that \eqref{eq:discrete-compact} admits a solution $W_h \in \bbV_h$ for some $G \in \bbV_{-1,-1}$. This is a reasonable assumption since for $G = 0$, $W_h = 0$ is a solution. In the next section, we will use this result to prove discrete well-posedness for $h > 0$ small enough for any $G \in \bbV_{-1,-1}$, as first derived by Schatz \cite{Schatz1974}.
        \begin{lemma}[conditional $L^2$-error estimate] \label{lem:cond-l2-err-est}
            Assume that $\G$ is of class $C^3$ and data $G \in \bbV_{-1,-1}$. Let $W = (\bw, w) \in \bbV_{1,1}$ be the solution of \eqref{eq:ell-pb}, and suppose that $W_h = (\bw_h, w_h) \in \bbV_h$ is the solution of the discrete problem \eqref{eq:discrete-compact}. Then there exists a constant $C_8 > 0$ depending only on $\G$ and shape-regularity such that
            \begin{equation} \label{eq:err-est-in-l2}
                \|W - W_{h,t}\|_{0,0} \leq C_{8} \ h \ \bE_{1h,1}[W, W_h].
            \end{equation}
        \end{lemma}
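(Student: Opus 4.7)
The strategy is a standard Aubin--Nitsche duality argument, adapted to the present non-coercive and normal-penalty setting. Set $E := W - W_{h,t} \in \bbV_{0,0}$. Because $E$ has zero mean in its pressure component and is tangential in its velocity component (as a difference of two such objects, noting $\bP \bw_h \cdot \bn = 0$ and $\bw \cdot \bn = 0$), $E$ is a legitimate right-hand side for the adjoint problem. I would introduce the dual solution $V = (\bv, v) \in \bbV_{1,1}$ defined by
\[
\sA[Z, V] = (E, Z)_{0,0}, \qquad \forall Z \in \bbV_{1,1},
\]
which exists uniquely by Corollary~\ref{cor:adjoint}. Since $E \in \bbV_{0,0}$, Lemma~\ref{lem:adjoint-regularity} (higher regularity of the adjoint) upgrades the regularity to $V \in \bbV_{2,2}$ with the a priori bound $\|V\|_{2,2} \leq C_5 \|E\|_{0,0}$.

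Testing the adjoint problem with $Z = E$ yields the representation formula
\[
\|E\|_{0,0}^2 = \sA[E, V].
\]
Now I would split $V = (V - V_{h,t}) + V_{h,t}$, where $V_h := (\bI_h \bv, I_h v) \in \bbV_h$ is the Scott--Zhang interpolant from Section~\ref{sec:interp} and, by definition, $V_{h,t} = (\bP \bI_h \bv, I_h v) \in \bbV_{1,1}$. Invoking Lemma~\ref{lem:quasi-orth} (quasi-orthogonality) on the second piece gives $\sA[E, V_{h,t}] = \bk_h(\bw_h, \bI_h \bv)$, so that
\[
\|E\|_{0,0}^2 = \sA[E, V - V_{h,t}] + \bk_h(\bw_h, \bI_h \bv).
\]

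For the first term, continuity of $\sA$ from Proposition~\ref{prop:prop-of-sA} gives $|\sA[E, V - V_{h,t}]| \leq C_{\sA} \|E\|_{1,1} \|V - V_{h,t}\|_{1,1}$. For the second, Cauchy--Schwarz yields $|\bk_h(\bw_h, \bI_h \bv)| \leq \bk_h(\bw_h, \bw_h)^{1/2} \bk_h(\bI_h \bv, \bI_h \bv)^{1/2}$. Both $\|V - V_{h,t}\|_{1,1}$ and $\bk_h(\bI_h \bv, \bI_h \bv)^{1/2}$ are controlled by the error functional $\bE_{1h,1}[V, V_h]$, which by Lemma~\ref{lem:interp} is bounded by $C_7 h \|V\|_{2,2} \leq C_7 C_5 h \|E\|_{0,0}$. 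Assembling these bounds gives
\[
\|E\|_{0,0}^2 \leq C_{\sA} C_7 C_5 \, h \, \|E\|_{1,1} \|E\|_{0,0} + C_7 C_5 \, h \, \bk_h(\bw_h, \bw_h)^{1/2} \|E\|_{0,0},
\]
and dividing by $\|E\|_{0,0}$ together with the elementary inequality $\|E\|_{1,1} + \bk_h(\bw_h, \bw_h)^{1/2} \leq \sqrt{2}\, \bE_{1h,1}[W, W_h]$ produces the asserted estimate with $C_8 := \sqrt{2}\, C_7 C_5 (C_{\sA} + 1)$.

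The main subtlety (rather than obstacle) is the bookkeeping introduced by the normal penalty: the term $\bk_h(\bw_h, \bI_h \bv)$ that survives the Galerkin quasi-orthogonality must be absorbed cleanly, and this works only because the interpolation result Lemma~\ref{lem:interp} controls $\bk_h(\bI_h \bv, \bI_h \bv)^{1/2}$ by the same $O(h)$ factor as $\|V - V_{h,t}\|_{1,1}$. The use of $V_{h,t}$ rather than $V_h$ as a test function in the adjoint equation is what makes the identification with the continuous adjoint problem valid, since $V_{h,t} \in \bbV_{1,1}$ even though $V_h \notin \bbV_{1,1}$.
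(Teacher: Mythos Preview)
Your proof is correct and follows essentially the same Aubin--Nitsche duality argument as the paper: both set up the adjoint problem with right-hand side $E = W - W_{h,t}$, invoke the $\bbV_{2,2}$-regularity of Lemma~\ref{lem:adjoint-regularity}, apply quasi-orthogonality (Lemma~\ref{lem:quasi-orth}) with the Scott--Zhang interpolant, and then use the interpolation estimate of Lemma~\ref{lem:interp}. The only cosmetic difference is that the paper packages the two resulting terms directly as $\max\{1,C_\sA\}\,\bE_{1h,1}[W,W_h]\,\bE_{1h,1}[V,V_h]$ via Cauchy--Schwarz in $\bbR^2$, whereas you bound them separately and combine at the end, yielding a slightly different (but equally valid) constant.
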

        \begin{proof}
            Let $E := (\be_\bw, e_w) := (\bw - \bP \bw_h, w - w_h) = W - W_{h,t}$ be the error. Since $E \in \bbV_{1,1} \subset \bbV_{-1,-1}$, we consider the following adjoint problem: find $V = (\bv, v) \in \bbV_{1,1}$ such that for all $Y \in \bbV_{1,1}$:
            \begin{equation} \label{eq:dual-pb}
                \sA[Y, V] = E[Y].
            \end{equation}
            Note that since $E \in \bbV_{1,1}$, we can write the right-hand side of \eqref{eq:dual-pb} as
            \[
            E[Y] = \langle \be_\bw, \by \rangle_{-1,1} + \langle e_w, y \rangle_{-1,1} = (\be_\bw, \by) + (e_w, y).
            \]
            By Lemma \ref{lem:adjoint-regularity} (higher regularity of the adjoint problem) and the fact that $E \in \bbV_{1,1} \subset \bbV_{0,0}$, we have
            \begin{equation} \label{eq:dual-pb-reg}
                \|V\|_{2,2} \leq C_5 \|E\|_{0,0}.
            \end{equation}
            We test \eqref{eq:dual-pb} with $Y = E \in \bbV_{1,1}$ and use Lemma \ref{lem:quasi-orth} (quasi-orthogonality) for $V_h = (\bI_h \bv, I_h v)$, where $\bI_h, I_h$ is the Scott-Zhang interpolant (see section \ref{sec:interp}) to arrive at
            \[
            \begin{split}
                \|E\|_{0,0}^2 & = \sA[E, V] = \sA[W - W_{h,t}, V] \\
                & = \sA[W - W_{h,t}, V] - \sA[W - W_{h,t}, V_{h,t}] + \bk_h(\bw_h, \bv_h) \\
                & = \sA[W - W_{h,t}, V - V_{h,t}] + \bk_h(\bw_h, \bv_h).
            \end{split}
            \]
            Using Proposition \ref{prop:prop-of-sA} (properties of $\sA$), we obtain
            \[
            \begin{split}
                \|E\|_{0,0}^2 & \leq C_\sA \|W - W_{h,t}\|_1 \|V - V_{h,t}\|_1 + \bk_h(\bw_h, \bv_h) \\
                & \leq \max\{1, C_\sA\} \ \bE_{1h,1}[W, W_h] \ \bE_{1h,1}[V, V_h].
            \end{split}
            \]
            Next, we use Lemma \ref{lem:interp} (interpolation error) for $V$, followed by the $\bbV_{2,2}$-regularity estimate \eqref{eq:dual-pb-reg} for $V$ to see that
            \[
            \begin{split}
                \|E\|_{0,0}^2 & \leq \max\{1, C_\sA\} C_7 h \ \bE_{1h,1}[W, W_h] \ \|V\|_{2,2} \\
                & \leq \max\{1, C_\sA\} C_5 C_7 h \ \bE_{1h,1}[W, W_h] \ \|E\|_{0,0}.
            \end{split}
            \]
            Altogether, with $C_8 := \max\{1, C_\sA\} C_5 C_7$ we obtain the desired estimate \eqref{eq:err-est-in-l2}.
        \end{proof}
        
    \subsection{Well-posedness and quasi-optimality}
        We shall now prove a quasi-optimality result assuming that the discrete problem has a solution.
        \begin{lemma}[conditional quasi-optimality in $\bbV_{1,1}$] \label{lem:cond-quasi-opt}
            Assume that $\G$ is of class $C^3$ and data $G \in \bbV_{-1,-1}$. Let $W = (\bw, w) \in \bbV_{1,1}$ be the solution of \eqref{eq:ell-pb}, and suppose that $W_h = (\bw_h, w_h) \in \bbV_h$ is the solution of the discrete problem \eqref{eq:discrete-compact}. Then there exists a threshold $h_0 > 0$ given by
            \begin{equation} \label{eq:h0}
                h_0 := \sqrt{\frac{c_{\sA,1}}{4 c_{\sA,2} C_8^2}}
            \end{equation}
            such that for $0 < h \leq h_0$ and constant $C_9 = \frac{2 \max \{1, C_\sA^2\}}{c_{\sA,1}}$ we have
            \begin{equation} \label{eq:quasi-opt}
                \bE_{1h, 1}[W, W_h] \leq C_{9} \inf_{V_h \in \bbV_h} \bE_{1h,1}[W, V_h].
            \end{equation}
        \end{lemma}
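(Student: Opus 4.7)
I would follow the classical Schatz trick, combining a discrete Gårding inequality with the conditional duality estimate of Lemma \ref{lem:cond-l2-err-est}. Fix an arbitrary $V_h \in \bbV_h$ and set $E_h := W_h - V_h \in \bbV_h$, so that $E_{h,t} = W_{h,t} - V_{h,t}$. Apply the G\r{a}rding inequality of Lemma \ref{lem:prop-of-sAh} to $E_h$ to obtain
\[
c_{\sA,1} \|E_h\|_{1h,1}^2 \leq \sA_h[E_h, E_h] + c_{\sA,2} \|E_{h,t}\|_{0,0}^2.
\]
The goal is to bound the first term on the right by $\bE_{1h,1}[W,V_h]\|E_h\|_{1h,1}$ using consistency, and to absorb the second term on the right through the conditional duality estimate, provided $h$ is small enough.

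For the consistency term, I would expand $\sA_h[E_h, E_h] = \sA_h[W_h, E_h] - \sA_h[V_h, E_h]$, plug in the discrete equation $\sA_h[W_h, E_h] = G[E_{h,t}]$ and the continuous equation $\sA[W, E_{h,t}] = G[E_{h,t}]$ (since $E_{h,t} \in \bbV_{1,1}$), and use the splitting \eqref{eq:sAh-2} to get
\[
\sA_h[E_h, E_h] = \sA[W - V_{h,t}, E_{h,t}] - \bk_h(\bv_h, \be_h).
\]
Continuity of $\sA$ from Proposition \ref{prop:prop-of-sA}, a Cauchy--Schwarz bound on the $\bk_h$-term, and two applications of Young's inequality with weight $c_{\sA,1}/4$ then yield
\[
\sA_h[E_h, E_h] \leq \tfrac{c_{\sA,1}}{2}\|E_h\|_{1h,1}^2 + \tfrac{\max\{1,C_\sA^2\}}{c_{\sA,1}}\,\bE_{1h,1}[W,V_h]^2,
\]
after recognizing $\|W - V_{h,t}\|_{1,1}^2 + \bk_h(\bv_h,\bv_h) = \bE_{1h,1}[W,V_h]^2$.

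For the $L^2$ term, I would split $\|E_{h,t}\|_{0,0} \leq \|W - W_{h,t}\|_{0,0} + \|W - V_{h,t}\|_{0,0}$, apply Lemma \ref{lem:cond-l2-err-est} to the first summand and bound the second by $\bE_{1h,1}[W,V_h]$. This gives $c_{\sA,2}\|E_{h,t}\|_{0,0}^2 \lesssim c_{\sA,2} C_8^2 h^2 \bE_{1h,1}[W,W_h]^2 + c_{\sA,2}\bE_{1h,1}[W,V_h]^2$. Finally, a triangle inequality in both the $\bbV_{1,1}$-part and the $\bk_h$-part of $\bE_{1h,1}$ yields $\bE_{1h,1}[W,W_h]^2 \leq 2\bE_{1h,1}[W,V_h]^2 + 4\|E_h\|_{1h,1}^2$. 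Inserting this into the absorbed estimate converts the $c_{\sA,2}C_8^2 h^2 \bE_{1h,1}[W,W_h]^2$ contribution into $\|E_h\|_{1h,1}^2$ up to a factor proportional to $h^2/h_0^2$, which can be hidden on the left-hand side for $h \leq h_0$, with $h_0$ chosen exactly as in \eqref{eq:h0}. Taking the infimum over $V_h$ then gives \eqref{eq:quasi-opt} with the stated constant $C_9$.

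\textbf{Main obstacle.} The only delicate point is the asymmetry between $\sA_h$ and $\sA$: because $V_{h,t}$ differs from $V_h$ in the normal component of the velocity, the Galerkin orthogonality is not exact and generates the extra $\bk_h(\bv_h,\be_h)$ term. Careful use of identity \eqref{eq:sAh-2} and the fact that $\bk_h(\bv_h,\bv_h)^{1/2}$ is already part of $\bE_{1h,1}[W,V_h]$ is what makes this term harmless. The rest is a standard Aubin--Nitsche--Schatz absorption argument, where the smallness condition on $h$ is exactly what is needed to swallow the $\|E_{h,t}\|_{0,0}^2$ term back into $\|E_h\|_{1h,1}^2$.
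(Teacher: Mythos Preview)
Your argument is correct and follows the Schatz template, but it is organized differently from the paper's proof. The paper applies the \emph{continuous} G\r{a}rding inequality (Proposition \ref{prop:prop-of-sA}) directly to the error $E = W - W_{h,t} \in \bbV_{1,1}$, adds $c_{\sA,1}\bk_h(\bw_h,\bw_h)$ to both sides so that the left-hand side becomes $c_{\sA,1}\bE_{1h,1}[W,W_h]^2 - c_{\sA,2}\|W-W_{h,t}\|_{0,0}^2$, and then invokes quasi-orthogonality (Lemma \ref{lem:quasi-orth}) to shift the second slot of $\sA[E,\cdot]$ from $W - W_{h,t}$ to $W - V_{h,t}$. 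The $L^2$-term is exactly $\|W - W_{h,t}\|_{0,0}$, so Lemma \ref{lem:cond-l2-err-est} applies without any splitting, and one absorbs it on the left for $h \le h_0$. This yields the stated $h_0$ and $C_9$ with no intermediate triangle inequalities.

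Your route applies the \emph{discrete} G\r{a}rding inequality to $E_h = W_h - V_h$ instead, which means the $L^2$-term is $\|E_{h,t}\|_{0,0}$ rather than $\|W-W_{h,t}\|_{0,0}$, forcing a triangle-inequality split and a further passage between $\|E_h\|_{1h,1}$ and $\bE_{1h,1}[W,W_h]$. This is sound, but note two things: (i) the bound $\|W - V_{h,t}\|_{0,0} \le \bE_{1h,1}[W,V_h]$ that you use requires the Poincar\'e inequalities \eqref{eq:poincare}, since $\|\cdot\|_{1,1}$ is a pure gradient norm; and (ii) the extra factors of $2$ and the Poincar\'e constant accumulated along the way mean you will not recover the \emph{exact} values of $h_0$ and $C_9$ quoted in the statement, only comparable ones. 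The paper's approach is tighter precisely because it never leaves the quantity $\bE_{1h,1}[W,W_h]$.
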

        \begin{proof}
            Using G\r{a}rding's inequality from Proposition \ref{prop:prop-of-sA} (properties of $\sA$) applied to $E = W - W_{h,t} \in \bbV_{1,1}$, we have
            \[
            \begin{split}
                c_{\sA,1} \|E\|_{1,1}^2 - c_{\sA,2} \|E\|_{0,0}^2 \leq \sA[E, E].
            \end{split}
            \]
            Adding $c_{\sA, 1} \bk_h(\bw_h, \bw_h)$ to both sides and using $c_{\sA, 1} \leq 1$ results in
            \[
            \begin{split}
                c_{\sA,1} \bE_{1h, 1}[W, W_h]^2 - c_{\sA,2} \|E\|_{0,0}^2 & \leq \sA[E, E] + \bk_h(\bw_h, \bw_h) \\
                & = \sA[W - W_{h,t}, W - W_{h,t}] + \bk_h(\bw_h, \bw_h).
            \end{split}
            \]
            Let $V_h \in \bbV_h$ be a generic test function. Applying Lemma \ref{lem:quasi-orth} (quasi-orthogonality) with $W_h - V_h$ gives
            \[
            \begin{split}
                c_{\sA,1} \bE_{1h, 1}[W, W_h]^2 - c_{\sA,2} \|E\|_{0,0}^2 & = \sA[W - W_{h,t}, W - V_{h,t}] + \bk_h(\bw_h, \bv_h).
            \end{split}
            \]
            Invoking the continuity estimate for $\sA$ from Proposition \ref{prop:prop-of-sA} yields:
            \[
            \begin{split}
                c_{\sA,1} \bE_{1h, 1}[W, W_h]^2 - c_{\sA,2} \|E\|_{0,0}^2 & \leq \max\{1,C_{\sA}\} \ \bE_{1h,1}[W, W_h] \ \bE_{1h,1}[W, V_h] \\
                & \leq \frac{c_{\sA,1}}{2} \ \bE_{1h,1}[W, W_h]^2 + \frac{\max\{1,C_{\sA}^2\}}{2 c_{\sA,1}} \ \bE_{1h,1}[W, V_h]^2.
            \end{split}
            \]
            Therefore,
            \[
            \begin{split}
                & \frac{c_{\sA,1}}{2} \bE_{1h, 1}[W, W_h]^2 - c_{\sA, 2} \|W - W_{h,t}\|_{0,0}^2 \leq \frac{\max\{1,C_{\sA}^2\}}{2 c_{\sA,1}} \bE_{1h,1}[W, V_h]^2.
            \end{split}
            \]
            Using Lemma \ref{lem:cond-l2-err-est} (conditional $L^2$-error estimate) and letting
            \[
                h_0 := \sqrt{\frac{c_{\sA,1}}{4 c_{\sA,2} C_8^2}},
            \]
            we infer that the $L^2$-error can be absorbed into the first term, thus yielding:
            \[
            \frac{c_{\sA,1}}{4} \bE_{1h, 1}[W, W_h]^2 \leq \frac{\max\{1,C_{\sA}^2\}}{2 c_{\sA,1}} \bE_{1h,1}[W, V_h]^2.
            \]
            Defining $C_9 := \frac{2 \max\{1,C_{\sA}^2\}}{c_{\sA,1}^2}$ and recalling the fact that $V_h \in \bbV_h$ is arbitrary gives the desired \eqref{eq:quasi-opt}.
        \end{proof}
        
        The argument so far assumed that the discrete problem has a solution. Resorting to the fact that the discrete problem is linear, finite-dimensional, and square, we shall next show how this leads to discrete well-posedness, thereby making Lemma \ref{lem:cond-quasi-opt} (conditional quasi-optimality in $\bbV_{1,1}$) valid for any data $G \in \bbV_{-1,-1}$.

        \begin{theorem}[properties of discrete elliptic problem] \label{thm:discr-ell}
            Assume that $\G$ is of class $C^3$ and data $G \in \bbV_{-1,-1}$.
            Then for $0 < h \leq h_0$, with $h_0$ given by \eqref{eq:h0}, the discrete problem \eqref{eq:discrete} admits a unique solution $W_h \in \bbV_h$ and there exists $C_{10} > 0$ depending only on $\G$ and shape-regularity such that
            \begin{equation} \label{eq:discrete-stability}
                \|W_h\|_{1h,1} \leq C_{10} \|G\|_{-1,-1}.
            \end{equation}
            Furthermore, the following error estimates are valid:
            \begin{equation} \label{eq:error-est}
                \begin{split}
                    \bE_{1h, 1}[W, W_h] & \leq C_{9} \inf_{V_h \in \bbV_h} \bE_{1h,1}[W, V_h], \\
                    \|W - W_{h,t}\|_{0,0} & \leq C_{8} \ h \ \bE_{1h,1}[W, W_h],
                \end{split}
            \end{equation}
            with constants $C_8, C_9 > 0$ coming respectively from Lemmas \ref{lem:cond-l2-err-est} (conditional $L^2$-error estimate) and \ref{lem:cond-quasi-opt} (conditional quasi-optimality in $\bbV_{1,1}$).
        \end{theorem}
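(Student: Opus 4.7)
The plan is to use Schatz's trick: since the discrete system is linear, finite-dimensional, and square, it suffices to establish uniqueness, and then the error estimates and stability bound follow from the two conditional lemmas already at our disposal.

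First, I would show injectivity of the discrete operator. Set $G = 0$; then $W = 0$ is the solution of the continuous problem \eqref{eq:ell-pb}. Suppose $W_h \in \bbV_h$ is any solution of the homogeneous discrete problem $\sA_h[W_h, V_h] = 0$. Since $W_h$ is assumed to exist, Lemma \ref{lem:cond-quasi-opt} (conditional quasi-optimality) applies for $0 < h \leq h_0$ and yields
\[
    \bE_{1h,1}[0, W_h] \leq C_9 \inf_{V_h \in \bbV_h} \bE_{1h,1}[0, V_h] \leq C_9 \bE_{1h,1}[0, 0] = 0.
\]
Unwinding the definition \eqref{eq:err-fun} of $\bE_{1h,1}$, this gives $\|W_{h,t}\|_{1,1}^2 + \bk_h(\bw_h, \bw_h) = \|W_h\|_{1h,1}^2 = 0$, so $W_h = 0$. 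Hence the homogeneous discrete problem admits only the trivial solution; by the rank-nullity theorem on a finite-dimensional square system, \eqref{eq:discrete-compact} is well-posed for any $G \in \bbV_{-1,-1}$.

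Next, with existence and uniqueness secured, the error estimates \eqref{eq:error-est} are immediate: they are precisely the statements of Lemmas \ref{lem:cond-quasi-opt} and \ref{lem:cond-l2-err-est}, which are now unconditional for $0 < h \leq h_0$. For the stability bound \eqref{eq:discrete-stability}, I would apply the quasi-optimality estimate with test function $V_h = 0 \in \bbV_h$ to get $\bE_{1h,1}[W, W_h] \leq C_9 \bE_{1h,1}[W, 0] = C_9 \|W\|_{1,1}$. Then a triangle inequality yields
\[
    \|W_h\|_{1h,1}^2 = \|W_{h,t}\|_{1,1}^2 + \bk_h(\bw_h, \bw_h) \leq 2\|W\|_{1,1}^2 + 2\bE_{1h,1}[W, W_h]^2 \leq (2 + 2 C_9^2) \|W\|_{1,1}^2,
\]
and the continuous stability bound \eqref{eq:ell-stab-in-1-1} from Theorem \ref{thm:ell} converts this into $\|W_h\|_{1h,1} \leq C_{10} \|G\|_{-1,-1}$ with $C_{10} := C_3 \sqrt{2 + 2C_9^2}$.

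The main obstacle is purely conceptual rather than technical: one must recognize that the conditional quasi-optimality lemma, although formulated assuming a discrete solution exists, is exactly what rules out a nontrivial kernel of the homogeneous problem, which in turn gives existence for arbitrary data. Every remaining step is a direct invocation of the previously established lemmas together with a Young's inequality and a triangle inequality, so no new analytical estimates are required.
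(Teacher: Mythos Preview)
Your proposal is correct and follows essentially the same Schatz argument as the paper: both use the conditional quasi-optimality lemma with $V_h=0$ to bound $\bE_{1h,1}[W,W_h]$ by $C_9\|W\|_{1,1}$, deduce that $G=0$ forces $W_h=0$, and then invoke the continuous stability bound from Theorem~\ref{thm:ell}. The only cosmetic differences are the order of the steps (the paper derives the stability bound first and then specializes to $G=0$, whereas you do uniqueness first) and the resulting constant ($C_{10}=C_3(1+C_9)$ in the paper versus your $C_{10}=C_3\sqrt{2+2C_9^2}$).
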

        \begin{proof}
            Since the problem is linear, finite-dimensional and square, it suffices to show that if $G = 0$, then $W_h = 0$. Let us first assume that a solution $W_h \in \bbV_h$ of \eqref{eq:discrete} exists for some $G \in \bbV_{-1,-1}$. Due to Lemma \ref{lem:cond-quasi-opt}, we have
            \[
                \bE_{1h, 1}[W, W_h] \leq C_{9} \inf_{V_h \in \bbV_h} \bE_{1h,1}[W, V_h] \leq C_{9} \bE_{1h,1}[W, 0] = C_9 \|W\|_{1,1}.
            \]
            It follows by the triangle inequality and Theorem \ref{thm:ell} (elliptic reformulation) that, if $C_{10} := C_3(1 + C_9)$, then
            \[
            \|W_h\|_{1h,1} \leq (1 + C_9) \|W\|_{1,1} \leq C_3 (1 + C_9) \|G\|_{-1,-1} = C_{10} \|G\|_{-1,-1}.
            \]
            Hence, if $G = 0$, then $W_h = 0$ because $\|\cdot\|_{1h,1}$ is a norm on $\bbV_h$. We thus infer that for $0 < h \leq h_0$, \eqref{eq:discrete} admits a unique solution $W_h \in \bbV_h$ for any $G \in \bbV_{-1,-1}$, with the stability constant $C_{10} > 0$ being independent of $h$. Finally, this also means that the estimate \eqref{eq:quasi-opt} from Lemma \ref{lem:cond-quasi-opt} as well as estimate \eqref{eq:err-est-in-l2} from Lemma \ref{lem:cond-l2-err-est} are no longer conditional but valid for any $G \in \bbV_{-1,-1}$.
        \end{proof}

        \begin{remark}[assumption on $h > 0$ being small]
            We stress that the smallness assumption on $h$ coming from \eqref{eq:h0} essentially means that the mesh size $h$ needs to resolve the deviation of the Ricci curvature $\bR = \tr(\bW) \bW - \bW^2$ of the surface $\G$ from the identity on $\G$ (i.e., $\bP$). This assumption is different from the usual condition that $h$ resolves all principal curvatures $\kappa_i$ of $\G$ \cite{OlshanskiiReuskenGrande2009, DziukElliott2013, BurmanHansboLarsonMassingZahedi2016, BonitoDemlow2019, BonitoDemlowNochetto2020}.
        \end{remark}

        We conclude this section with optimal order error estimates.

        \begin{theorem}[optimal error estimates]
            Assume that $\G$ is of class $C^3$ and data $G \in \bbV_{0,0}$. Let $W \in \bbV_{2,2}$ be the solution of \eqref{eq:ell-pb}, and $W_h \in \bbV_h$ be the solution of \eqref{eq:discrete-compact}. Then there exists $C_{11} > 0$ depending only on $\G$ and shape-regularity such that for $0 < h \leq h_0$ 
            \[
            \|W - W_{h,t}\|_{0,0} + h \ \bE_{1h,1}[W, W_h] \leq C_{11} h^2 \|G\|_{0,0}.
            \]
        \end{theorem}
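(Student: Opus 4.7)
The plan is to simply chain together the results established up to this point: elliptic regularity of the continuous problem, interpolation error estimates, quasi-optimality of the discrete method, and the Aubin--Nitsche style $L^2$-bound. No new ingredient is needed; the work is bookkeeping the constants.

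First, since $G \in \bbV_{0,0} \subset \bbV_{-1,-1}$, Theorem \ref{thm:ell} (elliptic reformulation) guarantees $W \in \bbV_{2,2}$ with the higher-regularity bound $\|W\|_{2,2} \leq C_3' \|G\|_{0,0}$. In particular the Scott--Zhang interpolant $V_h^* := (\bI_h \bw, I_h w) \in \bbV_h$ is well defined, and by Lemma \ref{lem:interp} (interpolation error) we have $\bE_{1h,1}[W, V_h^*] \leq C_7 h \|W\|_{2,2}$, where we used that the $\bbV_{2,2}$-norm dominates the plain $\bH^2$-norm of $W$.

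Next, Theorem \ref{thm:discr-ell} (properties of discrete elliptic problem) is applicable because $0 < h \leq h_0$; it delivers existence of $W_h$ together with the quasi-optimal estimate
\[
\bE_{1h,1}[W, W_h] \leq C_9 \inf_{V_h \in \bbV_h} \bE_{1h,1}[W, V_h] \leq C_9 \bE_{1h,1}[W, V_h^*] \leq C_9 C_7 h \|W\|_{2,2} \leq C_9 C_7 C_3' \ h \ \|G\|_{0,0}.
\]
Multiplying by $h$ yields the $h$-weighted piece of the claim with constant $C_9 C_7 C_3'$.

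For the $L^2$-part, the second estimate in \eqref{eq:error-est} from Theorem \ref{thm:discr-ell} gives $\|W - W_{h,t}\|_{0,0} \leq C_8 \ h \ \bE_{1h,1}[W, W_h]$, and inserting the bound just derived yields $\|W - W_{h,t}\|_{0,0} \leq C_8 C_9 C_7 C_3' \ h^2 \ \|G\|_{0,0}$. Adding the two contributions and setting $C_{11} := (1 + C_8) C_9 C_7 C_3'$ produces the desired inequality. There is no genuine obstacle here, since all the heavy lifting (discrete well-posedness, conditional $L^2$-duality estimate, interpolation, $H^2$-regularity of $W$) has already been carried out in the preceding results; the present theorem is essentially a corollary assembled by combining them in the stated order.
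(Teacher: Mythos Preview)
Your proposal is correct and follows exactly the approach of the paper, which simply combines the $L^2$-error estimate \eqref{eq:err-est-in-l2}, the quasi-optimality \eqref{eq:quasi-opt}, Lemma \ref{lem:interp} (interpolation error), and the higher-regularity bound from Theorem \ref{thm:ell}. The only difference is that you spell out the constant explicitly, which the paper omits.
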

        \begin{proof}
            We combine \eqref{eq:err-est-in-l2}, \eqref{eq:quasi-opt}, Lemma \ref{lem:interp} (interpolation error) together with Theorem \ref{thm:ell} (elliptic reformulation).
        \end{proof}

        \begin{remark}[higher-order error estimates]
            Upon resorting to higher regularity estimates for Stokes, and thus for the elliptic problem \eqref{eq:ell-pb}, as well as higher-order interpolation estimates, one can derive higher-order error estimates for the Galerkin solution of \eqref{eq:discrete-compact}.
        \end{remark}

        \begin{remark}[comparison with Stokes regularity]
            Recall from Corollary \ref{cor:reg-res-stokes} (higher regularity for Stokes) that the regularity assumption $G \in \bbV_{0,0}$ is guaranteed by $\bf \in \bH_t(\divG), f \in H^2_\#$, where $\bH_t(\divG)$ is defined in \eqref{eq:hdiv}. In contrast, $\bf \in \bL^2_t, f \in H^1_\#$ is the minimal data regularity for Stokes that leads to linear order of convergence in the space $\bH^1_t \times L^2_\#$. This regularity discrepancy is due to the fact that the current solution space is $\bH^1_t \times H^1_\#$, which turns out to be one order higher for the pressure.
        \end{remark}

\section{Numerical experiments} \label{sec:numer-experiments}
    In this section, we document that the proposed algorithm is both accurate and practical. We assess its accuracy on a non-trivial surface while using higher-order finite elements with equal and disparate polynomial degrees for velocity and pressure. We also apply the method to \eqref{eq:stokes-2-new}, which is governed by the modified surface Laplacian.

    We assume that $\G$ is the zero level set of a $C^3$ non-degenerate function $\phi$ and that $\bn = \nabla d$ is the outer unit normal vector to $\G$.

    \subsection{Discretization}
        We consider a shape-regular and quasi-uniform partition $\cT_h$ of $\G$ into either flat or curved triangles. For the algorithm, we assume that we have a continuous piecewise polynomial approximation of $\phi$ and $\bn$, denoted respectively $\phi_h, \bn_h$. Thus, we have a discontinuous polynomial approximation $\bW_h$ of the Weingarten map $\bW$, which in turn gives us $\bR_h$. By $\bn_h$ we shall denote the exact normal to the discrete surface $\G_h$, and set $\bP_h := \bI - \bn_h \otimes \bn_h$.

        As it is typically done in the literature \cite{OlshanskiiQuainiReuskenYushutin2018, HansboLarsonLarsson2020, JankuhnOlshanskiiReuskenZhiliakov2021}, we rewrite the elliptic part 
        \[
        \ba_{\G_h}(\bw_h, \bv_h) := (\nabla_{\G_h} (\bP_h \bw_h), \nabla_{\G_h} (\bP_h \bv_h))_{\G_h}
        \]
        as
        \[
        \tilde \ba_{\G_h} (\bw_h, \bv_h) := (\nabla_{\G_h} \bw_h - (\bn_h \cdot \bw_h) \bW_h, \nabla_{\G_h} \bv_h - (\bn_h \cdot \bv_h) \bW_h)_{\G_h}.
        \]
        The other bilinear forms in \eqref{eq:discrete} are adapted to $\G_h$ in a straightforward manner. We note that for the $\bk_h$-form, we ought to use a higher-order approximation $\bn_h$ of the normal $\bn$, as was shown in \cite{HansboLarsonLarsson2020, JankuhnOlshanskiiReuskenZhiliakov2021}. The right-hand side of \eqref{eq:discrete} is given, respectively, by
        \[
        (\bf^e, \bP_h \bv_h)_{\G_h} \qquad \text{and} \qquad (\bf^e + \bP_h \nabla g^e, \nabla_{\G_h} v_h)_{\G_h}.
        \]
        The condition that $\int_{\G_h} u_h \ds_h = \int_\G u \ds$ is imposed via a scalar Lagrange multiplier.

        The code is implemented in NGSolve \cite{Schoberl2009}, and all experiments were run on a commercial laptop Lenovo Legion 5 Pro 16ACH6H, with 16GB RAM and 16 threads. The monolithic linear system was solved using GMRes with a built-in geometric multigrid preconditioner. We report the textbook, $h$-independent, behavior of multigrid. The reproducible code is available on \href{https://github.com/chromomons/surface-stokes-without-inf-sup-condition}{https://github.com/chromomons/surface-stokes-without-inf-sup-condition}.

    \subsection{Computational order of convergence}
        We consider the surface $\G$ to be a 2-1 torus given by the zero levelset of
        \begin{equation}
            \phi(x,y,z) := (\sqrt{x^2 + y^2} - R)^2 + z^2 - r^2
        \end{equation}
        for $(R, r) = (2,1)$. Note that even though this surface is axisymmetric, its Gauss curvature changes sign (it is positive on the ``outside" and negative on the ``inside"). We consider the following velocity-pressure solution pair
        \begin{equation}
            \begin{split}
                \bu(x,y,z) & := \bP (-y^3, z^2+y, x)^T, \\
                u(x,y,z) & := \sin(\pi x) \sin(\pi y z),
            \end{split}
        \end{equation}
        noting that $\bu$ is not $\divG$-free and is not a polynomial due to $\bP$. The data $(\bf, f) = (-\Delta_B \bu + \nablaG u, \divG \bu)$ for the Stokes problem is computed in Wolfram Mathematica. For the algorithm we shall only need $\bf, \bf + \nablaG f$ and $\bar u := \int_\G u \ds$. We shall measure errors in the relative $L^2$ and $H^1$-errors over $\G_h$:
        \[
        \begin{split}
            \|e_w\|_0 & := \|w^e - w_h\|_{\G_h} / \|w\|, \\
            \|e_w\|_1 & := \|\nabla_{\G_h} (w^e - w_h)\|_{\G_h} / \|\nablaG w\|, \\
            \|\be_\bw\|_0 & := \|\bP_h (\bw^e - \bw_h)\|_{\G_h} / \|\bw\|, \\
            \|\be_\bw\|_1 & := \|\nabla_{\G_h} (\bw^e - \bw_h) - (\bn_h \cdot (\bw^e - \bw_h)) \bW_h\|_{\G_h} / \|\nablaG \bw\|.
        \end{split}
        \]
        Table \ref{tab:errors} depicts experimental orders of convergence for the equal-order-approximation of the velocity-pressure pair, which are known to be non-inf-sup-stable for the Stokes problem \eqref{eq:stokes-intro}. We report optimal rates. The longest run (the last refinement for $P_1$) took approximately 20 minutes, with the longest section of the code being the matrix assembly.
        
        In principle, velocity and pressure can be approximated with independent, arbitrary polynomial degrees. However, since the error between the two is coupled, according to Theorem \ref{thm:discr-ell} (properties of discrete elliptic problem), this would lead to suboptimal rates in the variable approximated with the higher-order polynomial degree. To investigate this question, we consider pairs $\bP_{k}/P_{k+r}, k, r \geq 1$ known to be non-inf-sup stable for \eqref{eq:stokes-intro}. The experimental error rates are reported in Table \ref{tab:errors-2}. For $k=1$, we observe a reduced rate in the $L^2$-error for the velocity, as expected. For $k \geq 2$, some of the error rates for the pressure appear to be better than expected.

    \subsection{Stokes with the surface diffusion operator}
        The proposed algorithm can be used to gain insight into the more physically relevant Stokes system \eqref{eq:stokes-2}. To fix ideas, consider the surface from \cite{Dziuk1988} given by the zero levelset of
        \[\phi(x,y,z) = (x-z^2)^2 + y^2 + z^2 -1. \]
        Let the data be given by $\bf(x,y,z) = (xy, \sin(xy^2), -\exp(-z))^T$ and $f = 0$ (divergence-free solution). As in \cite{OlshanskiiYushutin2019, BonitoDemlowLicht2020, OlshanskiiReuskenZhiliakov2021, DemlowNeilan2024}, we take $0 < \alpha \ll 1$, and next discretize \eqref{eq:stokes-2-new} using the proposed method with the $\bP_3/P_3$ velocity-pressure pair. The plot of the computed solution $(\bu_h, u_h)$ is depicted in Figure \ref{fig:dziuk}.

        \begin{table}[ht!]
            \captionsetup{width=.7\linewidth}
            \centering
            \small{
            \begin{tabular}{| c || c | >{\tt}c | | >{\tt}c | >{\tt}c || >{\tt}c | >{\tt}c || >{\tt}c | >{\tt}c || >{\tt}c | >{\tt}c ||}
                \hline
                \multirow{2}{*}{El.} & \multirow{2}{*}{$\ell$} & \multirow{2}{*}{dof num} & \multicolumn{2}{c||}{$\|\be_\bw\|_0$} & \multicolumn{2}{c||}{$\|\be_\bw\|_1$} & \multicolumn{2}{c||}{$\|e_w\|_0$} & \multicolumn{2}{c||}{$\|e_w\|_1$} \\
                \cline{4-11}
                & & & rate & error & rate & error & rate & error & rate & error \\
                \hline
                \hline
                \multirow{6}{*}{$\bP_1/P_1$}
                & $1$ & 1.67E+03 &      & 3.34E-02 &      & 1.59E-01 &      & 5.03E-01 &      & 7.68E-01 \\
                & $2$ & 6.40E+03 & 1.93 & 8.78E-03 & 0.94 & 8.28E-02 & 1.72 & 1.53E-01 & 0.90 & 4.12E-01 \\
                & $3$ & 2.51E+04 & 1.99 & 2.21E-03 & 0.99 & 4.18E-02 & 1.95 & 3.95E-02 & 0.96 & 2.12E-01 \\
                & $4$ & 9.95E+04 & 2.02 & 5.44E-04 & 1.00 & 2.09E-02 & 2.00 & 9.87E-03 & 1.00 & 1.06E-01 \\
                & $5$ & 3.95E+05 & 1.99 & 1.37E-04 & 1.01 & 1.04E-02 & 1.98 & 2.51E-03 & 0.99 & 5.36E-02 \\
                & $6$ & 1.57E+06 & 2.02 & 3.38E-05 & 1.00 & 5.19E-03 & 2.08 & 5.92E-04 & 1.04 & 2.60E-02 \\
                \hline 
                \hline
                \multirow{5}{*}{$\bP_2/P_2$}
                & $1$ & 6.69E+03 &      & 9.01E-04 &      & 1.37E-02 &      & 9.36E-02 &      & 2.85E-01 \\
                & $2$ & 2.56E+04 & 2.75 & 1.34E-04 & 1.69 & 4.25E-03 & 2.71 & 1.43E-02 & 1.67 & 8.98E-02 \\
                & $3$ & 1.00E+05 & 2.85 & 1.86E-05 & 1.78 & 1.24E-03 & 2.89 & 1.93E-03 & 1.91 & 2.39E-02 \\
                & $4$ & 3.98E+05 & 2.95 & 2.41E-06 & 1.93 & 3.26E-04 & 2.99 & 2.43E-04 & 1.99 & 5.99E-03 \\
                & $5$ & 1.58E+06 & 2.97 & 3.06E-07 & 1.97 & 8.30E-05 & 2.96 & 3.12E-05 & 1.98 & 1.52E-03 \\
                \hline 
                \hline
                \multirow{4}{*}{$\bP_3/P_3$}
                & $1$ & 1.50E+04 &      & 1.24E-04 &      & 3.17E-03 &      & 2.38E-02 &      & 1.13E-01 \\
                & $2$ & 5.76E+04 & 3.72 & 9.45E-06 & 2.55 & 5.42E-04 & 3.60 & 1.96E-03 & 2.57 & 1.90E-02 \\
                & $3$ & 2.26E+05 & 3.98 & 6.01E-07 & 2.91 & 7.21E-05 & 4.26 & 1.02E-04 & 3.13 & 2.17E-03 \\
                & $4$ & 8.95E+05 & 4.07 & 3.57E-08 & 3.08 & 8.51E-06 & 4.03 & 6.27E-06 & 3.01 & 2.69E-04 \\
                \hline
                \hline
                \multirow{3}{*}{$\bP_4/P_4$}
                & $1$ & 2.68E+04 &      & 2.34E-05 &      & 4.79E-04 &      & 1.06E-02 &      & 2.68E-02 \\
                & $2$ & 1.02E+05 & 5.03 & 7.14E-07 & 4.52 & 2.09E-05 & 5.04 & 3.22E-04 & 4.01 & 1.66E-03 \\
                & $3$ & 4.01E+05 & 4.98 & 2.27E-08 & 4.14 & 1.18E-06 & 5.01 & 9.97E-06 & 4.08 & 9.78E-05 \\
                \hline
                \hline
                \multirow{3}{*}{$\bP_5/P_5$}
                & $1$ & 4.18E+04 &      & 4.24E-06 &      & 1.30E-04 &      & 1.25E-03 &      & 8.36E-03 \\
                & $2$ & 1.60E+05 & 5.93 & 6.94E-08 & 5.10 & 3.81E-06 & 5.12 & 3.59E-05 & 4.25 & 4.41E-04 \\
                & $3$ & 6.27E+05 & 6.57 & 7.32E-10 & 5.30 & 9.64E-08 & 6.53 & 3.88E-07 & 5.40 & 1.04E-05 \\
                \hline
                \hline
                \multirow{3}{*}{$\bP_6/P_6$}
                & $1$ & 6.02E+04 &      & 9.09E-07 &      & 3.38E-05 &      & 5.05E-04 &      & 3.97E-03 \\
                & $2$ & 2.31E+05 & 6.86 & 7.80E-09 & 5.76 & 6.23E-07 & 6.86 & 4.34E-06 & 6.01 & 6.17E-05 \\
                & $3$ & 9.03E+05 & 8.19 & 2.66E-11 & 7.05 & 4.71E-09 & 7.54 & 2.33E-08 & 6.26 & 8.03E-07 \\
                \hline
            \end{tabular}}
            \caption{Experimental orders of convergence for $\bP_k/P_k, 1 \leq k \leq 6,$ velocity-pressure pairs. Here, $\ell \geq 1$ is the refinement level. The rates of convergence are consistent with theory.}
            \label{tab:errors}
        \end{table}

        \begin{table}[ht!]
            \captionsetup{width=.7\linewidth}
            \centering
            \small{
            \begin{tabular}{| c || c | >{\tt}c | | >{\tt}c | >{\tt}c || >{\tt}c | >{\tt}c || >{\tt}c | >{\tt}c || >{\tt}c | >{\tt}c ||}
                \hline
                \multirow{2}{*}{El.} & \multirow{2}{*}{$\ell$} & \multirow{2}{*}{dof num} & \multicolumn{2}{c||}{$\|\be_\bw\|_0$} & \multicolumn{2}{c||}{$\|\be_\bw\|_1$} & \multicolumn{2}{c||}{$\|e_w\|_0$} & \multicolumn{2}{c||}{$\|e_w\|_1$} \\
                \cline{4-11}
                & & & rate & error & rate & error & rate & error & rate & error \\
                \hline
                \hline
                \multirow{5}{*}{$\bP_1/P_2$}
                & 1 & 2.93E+03 &      & 2.79E-02 &      & 1.89E-01 &      & 1.28E-01 &      & 2.86E-01 \\
                & 2 & 1.12E+04 & 1.96 & 7.15E-03 & 0.94 & 9.89E-02 & 2.26 & 2.68E-02 & 1.67 & 9.01E-02 \\
                & 3 & 4.39E+04 & 1.99 & 1.80E-03 & 0.99 & 4.99E-02 & 2.14 & 6.08E-03 & 1.91 & 2.39E-02 \\
                & 4 & 1.74E+05 & 2.02 & 4.43E-04 & 1.00 & 2.49E-02 & 2.07 & 1.44E-03 & 1.99 & 6.00E-03 \\
                & 5 & 6.91E+05 & 2.00 & 1.11E-04 & 1.00 & 1.25E-02 & 2.01 & 3.57E-04 & 1.98 & 1.52E-03 \\
                \hline
                \hline
                \multirow{5}{*}{$\bP_1/P_3$}
                & 1 & 5.02E+03 &      & 2.79E-02 &      & 1.91E-01 &      & 9.13E-02 &      & 1.16E-01 \\
                & 2 & 1.92E+04 & 1.96 & 7.16E-03 & 0.95 & 9.92E-02 & 2.01 & 2.27E-02 & 2.52 & 2.01E-02 \\
                & 3 & 7.52E+04 & 1.99 & 1.80E-03 & 0.99 & 4.99E-02 & 1.98 & 5.76E-03 & 2.87 & 2.74E-03 \\
                & 4 & 2.98E+05 & 2.02 & 4.43E-04 & 1.00 & 2.49E-02 & 2.02 & 1.42E-03 & 2.47 & 4.95E-04 \\
                & 5 & 1.18E+06 & 2.00 & 1.11E-04 & 1.00 & 1.25E-02 & 2.00 & 3.56E-04 & 2.17 & 1.10E-04 \\
                \hline
                \hline
                \multirow{4}{*}{$\bP_2/P_3$}
                & 1 & 8.78E+03 &      & 1.35E-03 &      & 2.19E-02 &      & 2.38E-02 &      & 1.13E-01 \\
                & 2 & 3.36E+04 & 2.70 & 2.07E-04 & 1.71 & 6.70E-03 & 3.59 & 1.97E-03 & 2.57 & 1.90E-02 \\
                & 3 & 1.32E+05 & 2.83 & 2.92E-05 & 1.79 & 1.94E-03 & 4.26 & 1.03E-04 & 3.13 & 2.17E-03 \\
                & 4 & 5.22E+05 & 2.94 & 3.79E-06 & 1.93 & 5.08E-04 & 4.03 & 6.30E-06 & 3.01 & 2.69E-04 \\
                \hline
                \hline
                \multirow{4}{*}{$\bP_2/P_4$}
                & 1 & 1.17E+04 &      & 1.35E-03 &      & 2.20E-02 &      & 1.07E-02 &      & 2.68E-02 \\
                & 2 & 4.48E+04 & 2.70 & 2.08E-04 & 1.71 & 6.72E-03 & 4.92 & 3.52E-04 & 4.01 & 1.67E-03 \\
                & 3 & 1.76E+05 & 2.83 & 2.92E-05 & 1.79 & 1.94E-03 & 4.68 & 1.37E-05 & 4.04 & 1.01E-04 \\
                & 4 & 6.96E+05 & 2.94 & 3.79E-06 & 1.93 & 5.08E-04 & 4.27 & 7.09E-07 & 3.84 & 7.08E-06 \\
                \hline
                \hline
                \multirow{4}{*}{$\bP_2/P_5$}
                & 1 & 1.55E+04 &      & 1.35E-03 &      & 2.19E-02 &      & 1.86E-03 &      & 8.41E-03 \\
                & 2 & 5.92E+04 & 2.70 & 2.08E-04 & 1.71 & 6.71E-03 & 3.67 & 1.46E-04 & 4.16 & 4.72E-04 \\
                & 3 & 2.32E+05 & 2.83 & 2.92E-05 & 1.79 & 1.94E-03 & 3.96 & 9.39E-06 & 4.06 & 2.83E-05 \\
                & 4 & 9.20E+05 & 2.94 & 3.79E-06 & 1.93 & 5.08E-04 & 3.88 & 6.36E-07 & 3.02 & 3.50E-06 \\
                \hline
                \hline
                \multirow{4}{*}{$\bP_3/P_4$}
                & 1 & 1.80E+04 &      & 1.08E-04 &      & 3.11E-03 &      & 1.06E-02 &      & 2.68E-02 \\
                & 2 & 6.88E+04 & 3.61 & 8.82E-06 & 2.59 & 5.16E-04 & 5.04 & 3.22E-04 & 4.01 & 1.66E-03 \\
                & 3 & 2.70E+05 & 3.87 & 6.03E-07 & 2.84 & 7.21E-05 & 5.01 & 9.97E-06 & 4.08 & 9.78E-05 \\
                & 4 & 1.07E+06 & 3.99 & 3.79E-08 & 2.97 & 9.18E-06 & 4.99 & 3.13E-07 & 3.99 & 6.16E-06 \\
                \hline
            \end{tabular}}
            \caption{Experimental order of convergence for non-inf-sup-stable velocity-pressure pairs for \eqref{eq:stokes-intro} (with pressure having the higher polynomial degree). Here, $\ell \geq 1$ is the refinement level. The rates are either consistent with theory or better.}
            \label{tab:errors-2}
        \end{table}

        \begin{figure}[ht!]
            \centering
            \includegraphics[width=\linewidth]{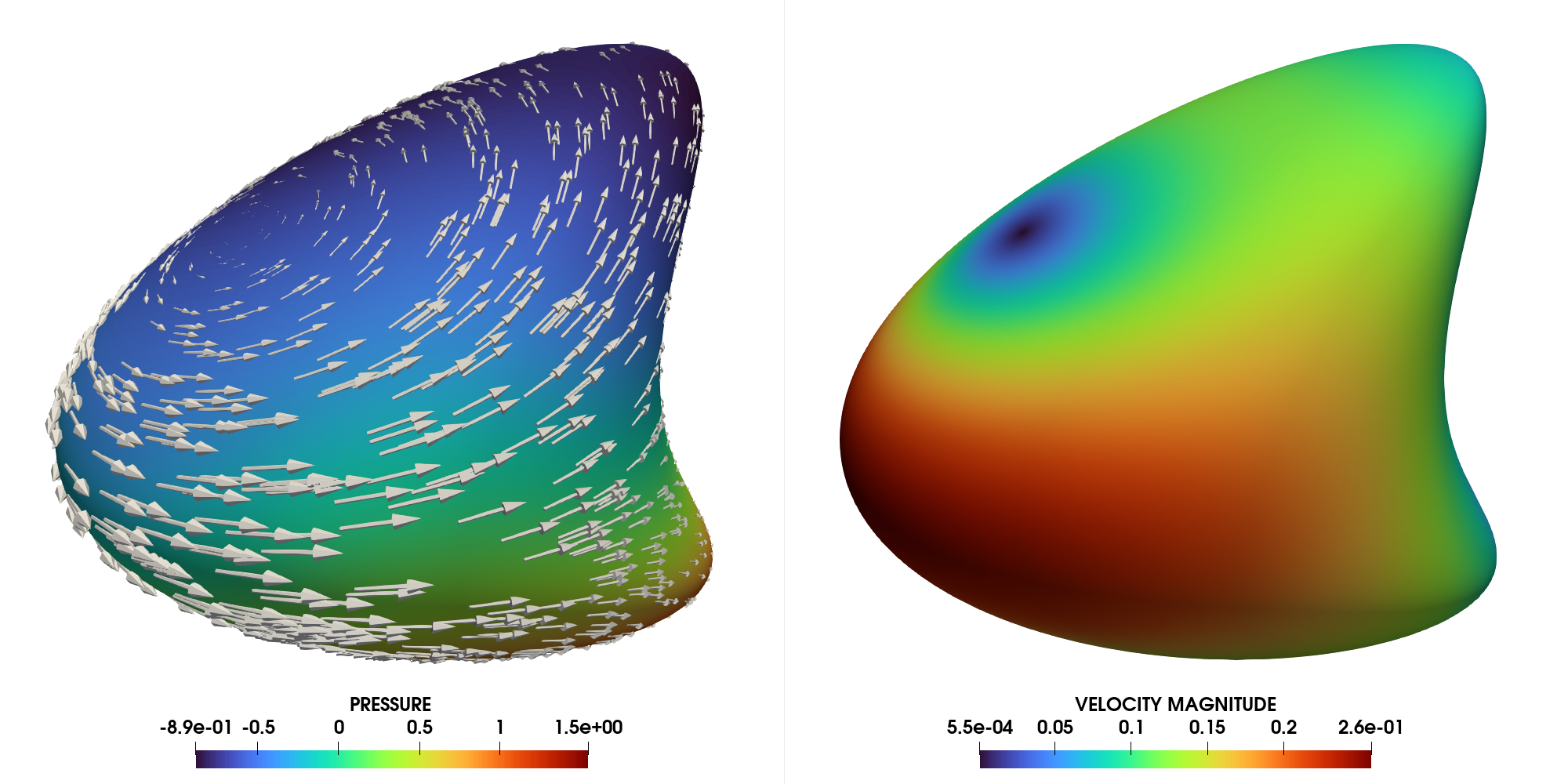}
            \caption{Numerical solution of the surface Stokes problem \eqref{eq:stokes-2} for $0 < \alpha \ll 1$ with $\bP_3/P_3$-element pair. The left picture depicts the velocity field colored by the pressure magnitude, while the right picture displays the velocity magnitude.}
            \label{fig:dziuk}
        \end{figure}
        
\section{Conclusions} \label{sec:conclusions}
    We develop a new finite element method for the surface Stokes problem. This approach hinges on an elliptic reformulation of the Surface Stokes problem in the primitive variables, allows for non-inf-sup stable velocity-pressure pairs for Stokes, and admits optimal order error estimates. We present a simple, idealized numerical analysis of a lifted parameteric FEM discretization of the problem for any polynomial degree. This method can be implemented and solved efficiently using geometric multigrid. We provide a numerical experiment revealing optimal order error estimates for higher-order elements. We also document the performance of the method for the more physically realistic Stokes model with the perturbed surface diffusion operator. Of course, the problem can be discretized with other methods of choice, like TraceFEM \cite{OlshanskiiQuainiReuskenYushutin2018, JankuhnOlshanskiiReuskenZhiliakov2021}, or penalty-free parametric FEMs \cite{BonitoDemlowLicht2020, DemlowNeilan2024, DemlowNeilan2025-2}.

    The main idea utilized here can also be applied to the transient Stokes problem. The corresponding manuscript is in preparation.

\printbibliography

\end{document}